\documentclass[11pt]{article}

\usepackage[margin=1in]{geometry}
\usepackage{amsmath,amssymb,amsfonts,amsthm}
\usepackage{graphicx}
\usepackage{array}
\usepackage{booktabs}
\usepackage{tabularx}
\usepackage{url}
\usepackage[sort&compress]{natbib}
\usepackage[expansion=false]{microtype}
\usepackage{hyperref}
\hypersetup{colorlinks,citecolor=blue,urlcolor=blue,linkcolor=blue}
\usepackage{setspace}
\theoremstyle{plain}
\newtheorem{theorem}{Theorem}
\newtheorem{lemma}{Lemma}
\newtheorem{proposition}{Proposition}
\newtheorem{corollary}{Corollary}
\theoremstyle{definition}
\newtheorem{definition}{Definition}
\newtheorem{assumption}{Assumption}
\theoremstyle{remark}
\newtheorem{remark}{Remark}

\makeatletter
\g@addto@macro\th@plain{\thm@notefont{\normalfont\bfseries}}
\g@addto@macro\th@definition{\thm@notefont{\normalfont\bfseries}}
\makeatother

\bibpunct[, ]{[}{]}{,}{n}{}{,}

\def\BIBand{and}

\begin{document}

\title{A Polyak--Ruppert Central Limit Theorem for SA-Adam
  with Momentum and Non-Convergent Adaptive Preconditioning}

\author{%
  Sunyoung An and Xiaoming Huo\\
  H.~Milton Stewart School of Industrial and Systems Engineering\\
  Georgia Institute of Technology, Atlanta, GA 30332, USA\\
  \texttt{san49@gatech.edu}, \texttt{huo@gatech.edu}}

\date{}

\maketitle

\begin{abstract}
Adaptive optimizers combining preconditioning, momentum, and weight
decay---Adam and AdamW---are, under Polyak--Ruppert averaging, candidate
engines for one-pass inference.  Does the averaged iterate keep the
classical Polyak--Ruppert central limit theorem (CLT), with sandwich
covariance $H^{-1}SH^{-1}$ (Hessian $H$, gradient covariance $S$), under momentum and non-convergent
preconditioning?  The preconditioner-only analysis does not carry over: with
momentum the canonical decomposition collapses to a tautology.  Treating
the augmented state (iterate, momentum buffer) as a time-varying linear
stochastic approximation (SA), we prove---under local stabilization---positive
drift stability, a non-autonomous Polyak--Ruppert CLT, and a projection
identity.  The upshot: the iterate-marginal covariance is \emph{exactly}
the plain stochastic gradient descent (SGD) sandwich $H^{-1}SH^{-1}$---the adaptivity is asymptotically
invisible.  This holds for
SA-Adam (sub-linearly vanishing momentum gain, $\gamma\in(\alpha,1)$; the
sub-linear regime is essential), not constant-$\beta$ deployed Adam.
Coupled $L_2$ weight decay yields the ridge-penalized sandwich,
extending one-pass inference to regularized problems.
\end{abstract}

\medskip
\noindent\textbf{Keywords:} stochastic approximation; Polyak--Ruppert averaging;
asymptotic normality; sandwich covariance; adaptive preconditioning; momentum;
Adam; online statistical inference; two-time-scale stochastic approximation;
augmented-state analysis.

\medskip
\noindent\textbf{MSC 2020:} Primary 60F05, 62L20; secondary 62F12, 65K10, 68T05, 90C15.

\section{Introduction}
\label{sec:intro}

Stochastic gradient descent (SGD) with Polyak--Ruppert iterate averaging
is a standard tool for one-pass stochastic optimization and statistical
inference.  Starting from an initial $x_1$, the recursion
$x_{t+1} = x_t - \eta_t\,\nabla f(x_t, \zeta_t)$---with step size $\eta_t$
and $\zeta_t$ the data sample drawn at step $t$---generates iterates
whose running average $\overline x_n = n^{-1}\sum_{t=1}^n x_t$ satisfies
the classical central limit theorem (CLT)
\begin{equation}
  \sqrt n\,(\overline x_n - x^*) \;\xrightarrow{d}\;
  \mathcal{N}\!\left(0,\; H^{-1}SH^{-1}\right),
  \label{eq:classical-pr}
\end{equation}
under appropriate conditions on the step-size schedule
$\eta_t = \eta_0 t^{-\alpha}$, $\alpha \in (1/2, 1)$
\citep{polyak1992acceleration, ruppert1988efficient}.  The sandwich
limit, with $H$ the population Hessian and $S$ the gradient covariance at
the minimizer $x^*$, is independent of the schedule's tuning constants
$\eta_0$ and $\alpha$ within this regime and is the
Godambe covariance of $M$-estimation, reducing to the Cram\'er--Rao bound
under correct specification \citep{vaart1998asymptotic}.

Practical large-scale optimization, however, rarely reduces to plain SGD or
a single recursion of the type above.  Modern adaptive methods introduce a data-driven preconditioner $P_t$
\citep{duchi2011adaptive, hazan2007logarithmic, tieleman2012rmsprop},
and Adam-type methods further couple it with a momentum buffer $m_t$
from exponential moving averaging (EMA) of past gradients.  The deployed form of Adam \citep{kingma2015adam} is the
canonical example, and its weight-decay variant AdamW
\citep{loshchilov2017decoupled} appends a decoupled decay term
$-\eta_t\lambda x_t$ (decay coefficient $\lambda$).  These optimizers are central to large-scale
training, and Polyak--Ruppert averaging is the standard
device for turning a stochastic-approximation (SA) trajectory into an
asymptotically efficient estimator; their combination is therefore a
natural candidate for one-pass statistical inference.  Weight decay
sharpens this goal statistically: coupled into the gradient ($L_2$
regularization), it is the optimization counterpart of
the ridge penalty, so an inference theory for such coupled updates is an
inference theory for \emph{regularized}---penalized
$M$-estimation---problems, whose efficient limit is the ridge sandwich
$H_\lambda^{-1}S_\lambda H_\lambda^{-1}$ at the penalized minimizer
$x^*_\lambda$.

From an operations-research standpoint, the question is whether
data-adaptive stochastic first-order methods can serve as reliable
one-pass engines for streaming estimation and large-scale stochastic
optimization---processing each datum once while still delivering a
calibrated uncertainty statement online.  Polyak--Ruppert averaging
supplies the classical efficiency benchmark, and adaptive preconditioning
and momentum are the algorithmic mechanisms that make these methods
effective at scale.  The foundational question is then when the adaptive
mechanisms are \emph{asymptotically invisible} to the averaged-iterate
covariance, so that the one-pass estimator retains the sandwich
limit~\eqref{eq:classical-pr} that underlies Wald-type inference.  When it
holds, the optimizer already run at scale doubles as an asymptotically
efficient estimator: confidence intervals follow from its averaged
iterates and a consistent covariance estimate, in a single streaming
pass, with no change to the update.

Whether this sandwich-covariance CLT~\eqref{eq:classical-pr} survives in
the richer setting---a data-driven preconditioner that need not converge,
coupled with a momentum buffer---has not been established.  Prior
asymptotic-normality results treat the two complications only in
isolation: momentum without preconditioning \citep{liu2023acceleration},
or an adaptive preconditioner required to converge to a fixed limit
\citep{leluc2023asymptotic, boyer2023stochastic}, which the constant
exponential-moving-average preconditioners deployed in practice do not.
\citet{anhuo2026} recently removed the convergence requirement on $P_t$
through a rate-only stabilization condition---the one-step variation of
the effective inverse drift $M_t = (P_tH)^{-1}$ decaying faster than
$t^{-(\alpha+1)/2}$---but only for preconditioned SGD \emph{without}
momentum; whether that framework extends to a non-convergent
preconditioner coupled with a momentum buffer was open
(Section~\ref{sec:related}).  This paper answers it affirmatively for
SA-Adam: an augmented-state analysis recovers the \emph{same} sandwich
limit $H^{-1}SH^{-1}$, so an adaptive preconditioner not assumed to
converge and time-varying momentum are asymptotically invisible to the averaged
iterate---though, as we show next, momentum makes the problem structurally
harder than the momentum-free case.

\paragraph{The structural role of momentum.}  Relative to the preconditioner-only
theory of \citet{anhuo2026}, the momentum buffer is a \emph{structural}
complication that forces a different route.  First,
it leaves the iterate-error recursion no longer closed on its own, collapsing
the preconditioner-only decomposition to a tautology
(Proposition~\ref{prop:tautology}) and forcing the augmented state
$(\Delta_t, m_{t-1})$, $\Delta_t := x_t - x^*$.  Second, the augmented
drift $L_t$ is non-normal, with a \emph{vanishing} two-time-scale gap
$\tau_t = \rho_t/\eta_t \to 0$ (momentum gain $\rho_t$ over step size),
unlike the uniformly stable preconditioner-only drift, and controlling it
requires the exact symmetrizer of Appendix~\ref{app:lyapunov}.  Third, the
momentum-decay exponent ($\rho_t \propto t^{-\gamma}$) must be sub-linear,
$\gamma < 1$: a $\gamma = 1$ buffer breaks the sandwich limit (Remark~\ref{rem:Rnz-gamma-necessary}).  The only essential ingredient imported
from \citet{anhuo2026} is its rate-only stabilization condition, entering
only as Assumption~\ref{ass:stab} and re-verified for SA-Adam
(Proposition~\ref{prop:sa-adam-stab}); every other result is proved here.

\subsection{Contributions}
\label{sec:contributions}

The paper makes five contributions; a detailed comparison with prior
work is given in Section~\ref{sec:related}.

\begin{enumerate}
\renewcommand{\labelenumi}{(\roman{enumi})}
\renewcommand{\theenumi}{(\roman{enumi})}
\item \label{contrib:augmented}
\emph{An augmented-state framework with a positive-stable spectrum}
(Section~\ref{sec:augmented}).  Treating the joint state
$z_t = (\Delta_t, m_{t-1})$ as a $2d$-dimensional linear stochastic
approximation, we derive its joint drift matrix $L_t$ and prove that
under uniform two-sided ellipticity of $P_tH$ (bounded
$\|M_t\|_{\mathrm{op}}$ and $\|P_tH\|_{\mathrm{op}}$), $L_t$ is positive
stable.  Each eigenvalue of $L_t$ has positive real part of order
$\tau_t := \rho_t/\eta_t \sim (c_1/\eta_0)\,t^{\alpha-\gamma}$, where
$\rho_t = c_1/t^\gamma$ is the momentum-decay rate with
$\gamma\in(\alpha,1)$; the augmented chain contracts at the one-step rate
$\rho_t$ and exhibits two-time-scale structure (slow buffer, fast
iterate) in this regime.

\item \label{contrib:obstruction}
\emph{Why the augmented state is necessary: a methodological obstruction}
(Section~\ref{sec:tautology}, Proposition~\ref{prop:tautology}).  We justify
the augmented route just introduced by showing the natural alternative fails:
any decomposition of the averaged Adam-iterate error $\overline\Delta_n$ that
fixes the canonical Polyak--Ruppert martingale and Taylor leading terms
$\Xi_n, T_n$ and reaches them by the Abel-summation/buffer-telescoping route
collapses to the tautological rearrangement $\overline\Delta_n = \Xi_n + T_n +
H^{-1}\overline g_n$ of the averaged gradient equation.  In particular,
the rate-only decomposition technique of \citet{anhuo2026} does
\emph{not} extend to momentum-based optimizers: there is no
preconditioner-isolating remainder term whose size is controlled by a
rate condition on a matrix increment.  The obstruction is
structural---it stems from the dependence of the error recursion on past
gradients through the buffer $m_t$, which makes the recursion
non-Markovian in the iterate error $\Delta_t$ alone (the augmented state
$(\Delta_t, m_{t-1})$ closes it).  (This is a negative
result about one proof technique, not a new limit theorem; it serves only to
justify the augmented-state route.)

\item \label{contrib:clt}
\emph{A non-autonomous Polyak--Ruppert CLT for the joint chain}
(Section~\ref{sec:clt}, Theorem~\ref{thm:augmented-clt}).  Building on
the linear-SA Polyak--Ruppert framework of \citet{mou2020linear}, we
allow the drift matrix $L_t$ to vary in time---its variation controlled
by the stabilization condition of \citet{anhuo2026} on $M_t = (P_tH)^{-1}$
together with the $O(t^{-1})$ one-step preconditioner increment of the standing
conditions, which the remainder bound consumes---and establish joint
asymptotic normality $\sqrt n\,\overline z_n \xrightarrow{d}
\mathcal{N}(0, \Sigma_z)$.  Here $\Sigma_z(t) := L_t^{-1}\Sigma_w(t)
L_t^{-\top}$ is the frozen-chain Polyak--Ruppert covariance at time $t$
(the congruence solution of $L_t\,\Sigma_z(t)\,L_t^\top = \Sigma_w(t)$,
not the stationary Lyapunov covariance); Theorem~\ref{thm:projection}
and an exact buffer-elimination identity show it takes the same value
$\Sigma_z$ for every $t$---so no single limiting $L$ is required.

\item \label{contrib:projection}
\emph{An iterate-marginal projection identity and SA-Adam
Polyak--Ruppert efficiency} (Section~\ref{sec:projection},
Theorem~\ref{thm:projection}; Section~\ref{sec:sa-adam},
Theorem~\ref{thm:main}).  We establish
$\Sigma_z^{(1,1)} = H^{-1}\,S\,H^{-1}$ regardless of $\rho, \tau, P$:
the iterate-marginal asymptotic covariance is the canonical
Polyak--Ruppert sandwich, \emph{independent} of momentum-decay rate
and preconditioner.  This extends the ``asymptotic equivalence
to averaged SGD'' observation of \citet{liu2023acceleration}---proved
for plain heavy-ball with constant momentum---to Adam-type methods with
adaptive preconditioning and time-varying momentum.  For an
SA-reparametrized Adam algorithm (SA-Adam) with schedule
$\beta_{1,t} = 1 - c_1/t^\gamma$ ($\gamma\in(\alpha,1)$) and
$\beta_{2,t} = 1 - c_2/t$, the averaged
iterate attains Polyak--Ruppert efficiency (Theorem~\ref{thm:main}),
thereby yielding the asymptotic-normality guarantee that underlies
Wald-type online inference.  The
sub-linear momentum-decay exponent $\gamma$ is essential: at the
canonical Polyak--Ruppert value $\gamma=1$, the endpoint buffer $m_n$
can contribute a non-vanishing $\Theta(n^{-1/2})$ term to the
iterate-marginal average, so the sandwich limit can fail: already in a scalar model the
iterate-marginal limit exceeds $H^{-1}SH^{-1}$
(Lemma~\ref{lem:Rnz-bound} and Remark~\ref{rem:Rnz-gamma-necessary} make
this precise).

\item \label{contrib:adamw}
\emph{Regularized one-pass inference via coupled ($L_2$) weight decay}
(Section~\ref{sec:variants}, Corollary~\ref{cor:adamw-coupled}).
Coupling weight decay into the gradient places the coupled-decay
variant within the paired-drift framework: the averaged iterate is $\sqrt n$-asymptotically
normal at the ridge-penalized minimizer $x^*_\lambda$ with the regularized
sandwich covariance $H_\lambda^{-1}S_\lambda H_\lambda^{-1}$
(Corollary~\ref{cor:adamw-coupled}), the sandwich form for penalized
$M$-estimation, so one-pass Wald-type inference for ridge-regularized
problems is immediate.  This also delineates the boundary of the
framework: the \emph{decoupled} weight decay of genuine AdamW
\citep{loshchilov2017decoupled} breaks the $P_tH$ structure behind the
projection identity, gives a preconditioner-dependent limit, and is left
as an open problem (Remark~\ref{rem:adamw}).
\end{enumerate}

The constant-EMA deployed form of Adam, with
$(\beta_1, \beta_2) = (0.9, 0.999)$, by contrast, lies outside our
framework on both counts: its constant second-moment EMA keeps the
one-step fluctuations of $P_t$ from decaying, so it is not expected to
satisfy the stabilization-rate condition of \citet{anhuo2026} (who treat
constant-EMA preconditioners heuristically, as a candidate
threshold-violation case, rather than proving non-stabilization), while
its constant momentum weight fails the joint spectral-rate condition of
Section~\ref{sec:clt}; whether it nonetheless attains the Polyak--Ruppert
sandwich limit through some compensating effect of the momentum buffer is
left open by our analysis.

\subsection{Related Work}
\label{sec:related}

\emph{Averaged SGD with momentum.}  \citet{liu2023acceleration} establish
a Polyak--Ruppert CLT for averaged SGD with momentum, with the same
$H^{-1}SH^{-1}$ limit as plain averaged SGD, but for a fixed momentum
weight and no preconditioning; \citet{wei2025weighted} treat weighted
averaging, and \citet{sebbouh2020convergence} give almost-sure rates for
stochastic heavy ball without an averaged-iterate CLT.  None handles
time-varying momentum and adaptive preconditioning together with the CLT
goal.

\emph{Augmented-state and two-time-scale analyses.}  Treating a momentum
method through the augmented (iterate, buffer) state has precedent in the
stochastic heavy-ball analysis of \citet{gadat2018stochastic} and in the
$2d$-dimensional contraction governing \citet{liu2023acceleration}; our
joint chain is a non-autonomous linear two-time-scale stochastic
approximation in the sense of \citet{konda2004actor,
mokkadem2006convergence, kaledin2020finite}.
\citet{barakat2021convergence} analyze Adam via an augmented-state
Lyapunov/ordinary-differential-equation (ODE) approach but target stationarity rather than the
averaged-iterate covariance.  In concurrent work,
\citet{wang2026inference} study an online Newton method whose Newton
system is solved by Nesterov-accelerated sketching, reducing the
resulting $2d$-dimensional recursion to $2\times2$ blocks
through a Cayley--Hamilton device akin to ours for
the momentum drift (Lemma~\ref{lem:Qsym-id}); their preconditioner is a
\emph{convergent} Hessian average and their CLT is for the
\emph{last} iterate, whereas ours is a $P_t$ not assumed to converge,
with a Polyak--Ruppert averaged-iterate CLT.  The new ingredients here are
this preconditioner and the closed-form symmetrizer of
Appendix~\ref{app:lyapunov}.

\emph{Adaptive preconditioning, convergent and not.}  Asymptotic
normality has been established for conditioned SGD with conditioning
matrices $C_t \to C$ \citep{leluc2023asymptotic} and for stochastic
Newton and weighted-averaged variants \citep{boyer2023stochastic}, in
each case under a \emph{convergent} conditioning matrix---excluding the
constant-EMA preconditioners deployed in practice; related
rate analyses include \citet{duchi2011adaptive,
defossez2022simple, godichon2024adagrad}.  \citet{anhuo2026} removed the
convergence requirement through a rate-only stabilization condition, but
only for preconditioned SGD \emph{without} momentum, and via an exact
pathwise decomposition that does not carry over to momentum
(Proposition~\ref{prop:tautology}).  The present paper is the
momentum-augmented counterpart: it replaces that decomposition with the
augmented-state framework, establishes the joint CLT and a projection
identity now joint in $P$ and the momentum schedule, and additionally
covers coupled weight decay; only the stabilization condition
(Assumption~\ref{ass:stab}) is imported.

\emph{Adam-family algorithms and weight decay.}
\citet{adaptiveprecond2025} give finite-sample convergence for
Nesterov-accelerated adaptive methods; \citet{reddi2018convergence} and
\citet{loshchilov2017decoupled} introduce AMSGrad and AdamW, respectively; we bring
AMSGrad within the Polyak--Ruppert framework as a certified variant, and
the \emph{coupled} ($L_2$) form of weight decay via
Corollary~\ref{cor:adamw-coupled} (genuine decoupled AdamW is left open,
Remark~\ref{rem:adamw}).
Weight decay is the optimization counterpart of ridge penalization; no
asymptotic-normality theory for the averaged iterate of a
\emph{regularized} adaptive-momentum method is known, and the
coupled form supplies one, with the penalized sandwich
$H_\lambda^{-1}S_\lambda H_\lambda^{-1}$.  The \emph{decoupled} form of
genuine AdamW bypasses the preconditioner, gives a
preconditioner-dependent limit outside the framework, and remains open
(Remark~\ref{rem:adamw}).

\emph{Why deployed Adam remains open.}  Taken together, the results above
leave one case untouched: the deployed, constant-EMA form of
Adam---fixed $(\beta_1,\beta_2)$, hence a non-vanishing gain---has no
known Polyak--Ruppert central limit theorem for its averaged iterate.
Each existing normality result forgoes a defining feature of Adam or
removes the obstacle: momentum without preconditioning
\citep{liu2023acceleration}, a \emph{convergent}
preconditioner $P_t \to P$ \citep{leluc2023asymptotic,
boyer2023stochastic}, or a decaying-gain reparametrization
\citep{anhuo2026}---SA-Adam being its momentum-augmented instance.
The remaining Adam literature is algorithmic or convergence-oriented
\citep{kingma2015adam, reddi2018convergence, defossez2022simple,
adaptiveprecond2025} and says nothing about the limiting law of the
average.  The obstruction is structural: a constant exponential moving
average keeps the one-step fluctuations of $P_t$ from vanishing, so the
preconditioner is not expected to stabilize and the chain leaves the Robbins--Monro
regime on which the Polyak--Ruppert machinery---ours included---rests.
A bona fide constant-EMA central limit theorem would instead demand an
ergodic or mixing-rate analysis of the joint chain---of the kind
developed for constant-step-size SGD viewed as a Markov chain
\citep{dieuleveut2020bridging}---which has not been carried out in the
adaptive-momentum setting.  The one apparent exception, \citet{barakat2021convergence},
is a conditional \emph{fluctuation} CLT around stationary points in the
nonconvex regime---a different object from the averaged-iterate sandwich
limit at issue.  Deployed Adam thus sits just outside the present
framework, and its averaged-iterate central limit theorem remains open.

\paragraph{Outline.}
Section~\ref{sec:setup} sets up the model and assumptions.
Section~\ref{sec:augmented} first records a methodological obstruction
(Section~\ref{sec:tautology}) that motivates augmenting the state, then
develops the augmented-state framework: the joint linear recursion, the
spectral structure of $L_t$, and the iterate mean-square bound.
Section~\ref{sec:asymptotics} proves the non-autonomous Polyak--Ruppert
CLT for the joint chain---outlined there, with the full proof in
Appendix~\ref{app:clt-proof}---and establishes the iterate-marginal
projection identity.
Section~\ref{sec:sa-adam} verifies the stabilization condition for
SA-Adam, proves the main theorem, and treats the bias-correction
reduction, downstream online inference, and side variants (SA-AMSGrad,
coupled $L_2$ weight decay, and SA-Adam-full).
Section~\ref{sec:simulation} describes the simulation study.
Section~\ref{sec:discussion} concludes.

\section{Setup and Assumptions}
\label{sec:setup}

We adopt the probabilistic setup of \citet{anhuo2026} and extend it to
the momentum-augmented setting.  For parameter $x \in \mathbb{R}^d$ and
data observation $\zeta$, let
$f(x,\zeta)$ denote the sample loss and
$F(x) := \mathbb{E}_\zeta[f(x,\zeta)]$ the population risk; we assume
$F$ has a unique minimizer $x^* := \arg\min F$, and write
$H := \nabla^2 F(x^*)$ for the population Hessian and
$S := \mathrm{Cov}(\nabla f(x^*,\zeta))$ for the gradient covariance at
$x^*$ (both well-defined under
Assumptions~\ref{ass:var}--\ref{ass:quadratic} below).  We write
$A \preceq B$ when $B-A$ is positive semidefinite, and
$\|A\|_{\mathrm{op}}$ for the operator norm of a matrix~$A$.

Let $\{\zeta_t\}_{t \ge 1}$ be a sequence of i.i.d.\ random variables.
$\{x_t\}_{t\ge1}$ denotes the iterate sequence generated by the
SA-Adam update introduced below, with initialization $x_1$ assumed
deterministic.  Define the natural filtration
$\mathcal{F}_t := \sigma(x_1,\zeta_1,\ldots,\zeta_t)$, $t \ge 0$.
Let $\xi_t(x) := \nabla f(x,\zeta_t) - \nabla F(x)$ denote the stochastic
gradient noise as a function of the parameter, and write
$\xi_t := \xi_t(x_t) = \nabla f(x_t,\zeta_t) - \nabla F(x_t)$ for its
evaluation at the current iterate.  Because $\zeta_t$ is independent of
$\mathcal F_{t-1}$, $x_t$ is $\mathcal F_{t-1}$-measurable, and $\xi_t$ is a
martingale difference (Assumption~\ref{ass:mg}), the
conditional noise covariance is the deterministic map
$S(x) := \mathrm{Cov}_\zeta(\nabla f(x,\zeta))$ evaluated at the iterate,
$\mathbb E[\xi_t\xi_t^\top \mid \mathcal F_{t-1}] = S(x_t)$, $S = S(x^*)$,
a representation used in the continuity hypothesis of
Theorem~\ref{thm:augmented-clt}.

\subsection{Assumptions}

Assumptions~\ref{ass:mg}--\ref{ass:iterate} below are the model
assumptions of \citet{anhuo2026}, restated here for completeness.

\begin{assumption}[Martingale Difference]
\label{ass:mg}
$\mathbb{E}[\xi_t \mid \mathcal{F}_{t-1}] = 0$ for all $t \geq 1$.
\end{assumption}

\begin{assumption}[Uniform Conditional Covariance Bound]
\label{ass:var}
There exists a deterministic positive semidefinite matrix $\overline{S}$ such
that $\mathbb{E}[\xi_t \xi_t^\top \mid \mathcal{F}_{t-1}] \preceq \overline{S}$
for all $t \geq 1$, with $S \preceq \overline S$.
\end{assumption}

\begin{assumption}[Strong Convexity]
\label{ass:convex}
$F$ is $\mu$-strongly convex for some $\mu > 0$, hence $H \succeq \mu I$.
\end{assumption}

\begin{assumption}[Local Second-Order Expansion with Trajectory Confinement]
\label{ass:quadratic}
There exist a neighborhood $\mathcal{N}$ of $x^*$ and a constant $L_R > 0$
such that $F$ is twice continuously differentiable on $\mathcal{N}$,
$\nabla^2 F(x^*) = H$, and $\nabla F(x) = H(x - x^*) + r(x)$,
$\|r(x)\| \leq L_R \|x - x^*\|^2$
for all $x \in \mathcal{N}$; moreover, $x_t \in \mathcal{N}$ almost surely
for all $t \ge 1$.  Define $u_t := r(x_t)$, so that
$\nabla f(x_t,\zeta_t) = H \Delta_t + u_t + \xi_t$ with
$\Delta_t := x_t - x^*$.
\end{assumption}

\begin{assumption}[Iterate Bound]
\label{ass:iterate}
There exist constants $C_\Delta > 0$ and $\alpha \in (1/2, 1)$ such that
$\mathbb{E}\|\Delta_t\|^2 \leq C_\Delta\, t^{-\alpha}$ for all
$t \geq 1$.
\end{assumption}

Assumption~\ref{ass:iterate} is the iterate mean-square bound assumed by \citet{anhuo2026}, the
standard mean-squared error (MSE) rate achieved by step size $\eta_t = \eta_0 t^{-\alpha}$;
as in that paper, it is stated here but derived below,
conditional on Assumption~\ref{ass:fourth}.  The general augmented mean-square bound is
Proposition~\ref{prop:augmented-mse} (Section~\ref{sec:augmented}); it is
specialized to SA-Adam in Section~\ref{sec:sa-adam}
(Theorem~\ref{thm:main}).

\begin{assumption}[Fourth-moment trajectory stability]
\label{ass:fourth}
There exists $C_4 < \infty$ such that $\mathbb E\|\Delta_t\|^4 \le
C_4\, t^{-2\alpha}$ for all $t \ge 1$.
\end{assumption}

Assumption~\ref{ass:fourth} is the fourth-moment analogue of
Assumption~\ref{ass:iterate}, used to control the nonlinear (Taylor)
terms in the mean-square analysis of
Proposition~\ref{prop:augmented-mse} and, through the resulting stability
bound, in the central-limit Taylor remainder (Lemma~\ref{lem:taylor}).  We take it as a hypothesis, so the
main results (Proposition~\ref{prop:augmented-mse},
Theorems~\ref{thm:augmented-clt} and~\ref{thm:main}) are \emph{conditional}
on it; two observations make it mild.  First, the noise's higher moments
need no extra assumption: under the main results' bounded-gradient hypothesis
$\|g_t\| \le G$ a.s.\ (and $\nabla F$ bounded on the
confinement neighborhood $\mathcal N$), the centered noise $\xi_t = g_t -
\nabla F(x_t)$ is almost surely bounded, so all of its conditional
moments---in particular the fourth---are finite and uniformly bounded.
Second, given this, the rate $\mathbb E\|\Delta_t\|^4 = O(t^{-2\alpha})$
could be obtained by the same mechanism as the second-moment bound, namely a
joint Lyapunov bootstrap on $(\mathbb E V_t, \mathbb E V_t^2)$: squaring
the exact one-step contraction~\eqref{eq:Qsym-onestep} gives an
$\mathbb E V_t^2$-recursion with effective contraction $2\rho_t$ whose
forcing---built from the bounded noise and the Taylor increments, with
the martingale cross-term vanishing in conditional expectation and the
remaining cross-term controlled by $\mathbb E V_t = O(\eta_t)$---closes
at order $O(\eta_t^2)$, in exact parallel to
Proposition~\ref{prop:augmented-mse}.  This is the fourth-moment
counterpart of the primitive verification of
Assumption~\ref{ass:iterate} (the iterate-bound assumption of \citet{anhuo2026}); we
record it as a hypothesis rather than reproduce the parallel
$2p$-th-moment bootstrap, which introduces no new ideas but would have to
be run jointly with Proposition~\ref{prop:augmented-mse}.

\subsection{The SA-Adam Recursion}
\label{sec:sa-adam-defn}

Let $\eta_t = \eta_0 t^{-\alpha}$ with $\eta_0 > 0$ and
$\alpha \in (1/2, 1)$, and let $\rho_t = c_1/t^\gamma$ with $c_1 > 0$ and
\emph{momentum exponent} $\gamma \in (\alpha, 1)$.  So that every
momentum weight $1 - \rho_t$ is a genuine convex weight, we take
$\rho_t \in (0,1)$ for all $t \ge 1$: for $c_1 \ge 1$ this is enforced by
the shifted schedule $\rho_t = c_1/(t + t_0)^\gamma$ with
$t_0 := \lceil c_1^{1/\gamma}\rceil$ (asymptotically identical,
$\rho_t \sim c_1 t^{-\gamma}$, and changing no statement below), while for
$c_1 < 1$ the plain schedule $\rho_t = c_1/t^\gamma$ already satisfies it.
Consequently every momentum weight $1 - \rho_s \in (0,1)$, so the
bias-correction products below are well defined.  The two-sided
constraint
$\gamma > \alpha$ (two-time-scale separation, $\tau_t = \rho_t/\eta_t \sim
(c_1/\eta_0)t^{\alpha-\gamma}\to 0$) and $\gamma < 1$ (vanishing buffer
residual at the $\sqrt n$-scale, Appendix~\ref{app:clt-proof}) is
essential: the canonical Polyak--Ruppert choice $\gamma = 1$ leaves a
persistent endpoint-buffer term in the iterate-marginal limit, so a
strictly sub-linear momentum decay is required.  No additional
lower-bound condition on $c_1$ beyond $c_1 > 0$ is needed: under
$\gamma < 1$, the contraction $\rho_t = c_1/t^\gamma$ asymptotically
dominates the $O(1/t)$ time-variation feedback in
Appendix~\ref{app:lyapunov}.  The SA-Adam update is
\begin{align}
\label{eq:sa-adam}
  g_t &:= \nabla f(x_t, \zeta_t), \\
  m_t &= (1 - \rho_t)\,m_{t-1} + \rho_t\,g_t, \\
  v_t &= (1 - \rho^v_t)\,v_{t-1} + \rho^v_t\,(g_t \odot g_t + \epsilon\mathbf{1}),
    \qquad \rho^v_t = c_2/t, \\
  P_t &= \mathrm{Diag}(\hat v_{t-1})^{-1/2}, \\
  x_{t+1} &= x_t - \eta_t\,P_t\,\hat m_t,
\end{align}
with $m_0 = 0$, $v_0 = \epsilon\mathbf 1$, $\epsilon > 0$,
$c_2 \in (0,1]$, and bias-corrected moments $\hat m_t = m_t/\kappa^m_t$,
$\hat v_t = v_t/\kappa^v_t$, the bias-correction factors being formed
from the realized gains,
\[
  \kappa^m_t = 1 - \prod_{s=1}^t (1 - \rho_s),
  \qquad
  \kappa^v_t = 1 - \prod_{s=1}^t (1 - \rho^v_s),
  \qquad \rho^v_s = c_2/s .
\]
By the schedule fixed above, $\rho_s \in (0,1)$ and $\rho^v_s \in (0,1]$
for every $s \ge 1$, so each factor $1-\rho_s \in (0,1)$,
$1-\rho^v_s \in [0,1)$, and hence $\kappa^m_t, \kappa^v_t \in (0,1]$ for
all $t \ge 1$ (with $\kappa^v_t \equiv 1$ when $c_2 = 1$).  We adopt the
convention $\hat v_0 := v_0 = \epsilon\mathbf 1$, so that $P_1 =
\mathrm{Diag}(v_0)^{-1/2} = \epsilon^{-1/2} I$ is well defined; the first
momentum factor $\kappa^m_1 = \rho_1 > 0$ (equal to $c_1$ for the plain
schedule), so $\hat m_1$ needs no convention.

The bias correction satisfies $1 - \kappa^m_t = O(e^{-a t^{1-\gamma}})$
for any $a < c_1/(1-\gamma)$ (Lemma~\ref{lem:bias-effective-step}), so
$\kappa^m_t \to 1$ super-polynomially fast (since $1-\gamma > 0$), even
better behaved than the $\gamma = 1$ case.  In the analysis below
we drop the bias correction for notational simplicity; the full
algorithm with bias correction is recovered by an effective-step-size
reparametrization (see Section~\ref{sec:bias-correction}).

Define the predictable preconditioner sequence $\{P_t\}$ via the recursion
above, and note that $P_t$ is $\mathcal F_{t-1}$-measurable.  The
\emph{effective inverse drift} of \citet{anhuo2026} is $M_t := (P_t H)^{-1}$.

\begin{assumption}[Preconditioner Stabilization Condition]
\label{ass:stab}
There exists $\beta > (\alpha+1)/2$ and $C_M > 0$ such that, almost
surely, $\|M_t - M_{t-1}\|_{\mathrm{op}} \le C_M\,t^{-\beta}$ for all
$t \ge 2$ and $\sup_t \|M_t\|_{\mathrm{op}} < \infty$.
\end{assumption}

This is precisely the stabilization condition of
\citet{anhuo2026} (with their rate-optimal threshold
$\beta > (\alpha+1)/2$).  For SA-Adam with bounded
gradients it holds with $\beta = 1$: our preconditioner
$P_t = \mathrm{Diag}(\hat v_{t-1})^{-1/2}$ uses the \emph{bias-corrected}
second moment, so it parallels---rather than directly inherits---the
SA-RMSProp stabilization verification of \citet{anhuo2026}, which treats
the uncorrected recursion; the
bias-correction step is verified in
Proposition~\ref{prop:sa-adam-stab} below.

Subtracting $x^*$ from both sides of the update and using
Assumption~\ref{ass:quadratic} (so that $g_t = H\Delta_t + u_t + \xi_t$
with $u_t = r(x_t)$), we obtain the centered error recursion.  As flagged
above, the analysis is carried out for the \emph{un-bias-corrected}
update, in which $\hat m_t$ is replaced by the raw buffer $m_t$; the
bias-correction factor $\kappa^m_t$ is restored as an effective step size
$\tilde\eta_t = \eta_t/\kappa^m_t$ in
Section~\ref{sec:bias-correction}:
\begin{equation}
\label{eq:error-recursion}
  \Delta_{t+1} = \Delta_t - \eta_t\,P_t\,m_t,\qquad
  m_t = (1 - \rho_t)\,m_{t-1} + \rho_t\,(H \Delta_t + u_t + \xi_t).
\end{equation}
The averaged iterate is $\overline x_n = n^{-1}\sum_{t=1}^n x_t$ and the
averaged error is $\overline\Delta_n = n^{-1}\sum_{t=1}^n \Delta_t$.

The assumptions enter the analysis modularly.
Assumptions~\ref{ass:mg}--\ref{ass:quadratic} (martingale-difference noise,
a conditional-covariance bound, strong convexity, and a local quadratic
expansion with trajectory confinement) define the local linear
stochastic-approximation model.
Assumption~\ref{ass:iterate} (iterate bound) is not an extra hypothesis but
the conclusion of Proposition~\ref{prop:augmented-mse}.
Assumption~\ref{ass:fourth} (fourth-moment trajectory stability) is used only
to control the nonlinear Taylor terms---in the mean-square bound of
Proposition~\ref{prop:augmented-mse} and, through the resulting stability
bound, in the central-limit Taylor remainder (Lemma~\ref{lem:taylor}).
Assumption~\ref{ass:stab} (preconditioner stabilization), with the standing
$O(t^{-1})$ one-step preconditioner increment, controls the time variation of
the joint drift $L_t$.
These feed the three steps of the argument:
Proposition~\ref{prop:augmented-mse} establishes the augmented mean-square
rates; Theorem~\ref{thm:augmented-clt} converts them into the joint
Polyak--Ruppert CLT, whose covariance the projection identity
(Theorem~\ref{thm:projection}) evaluates as $H^{-1}SH^{-1}$; and
Proposition~\ref{prop:sa-adam-stab} verifies Assumption~\ref{ass:stab} and the
standing conditions for SA-Adam, giving the main theorem
(Theorem~\ref{thm:main}).

\section{The Augmented-State Framework}
\label{sec:augmented}

We analyze SA-Adam by treating the momentum buffer as an additional
state variable.  This section first records a structural obstruction
showing why a direct transcription of the pathwise-decomposition
technique of \citet{anhuo2026}---one that retains the Polyak--Ruppert
martingale and Taylor terms as a fixed leading part and seeks to isolate
the momentum contribution in a controllable remainder---collapses to a
tautology, and then constructs the joint linear recursion, establishes
its spectrum, and proves the iterate mean-square bound.

\subsection{A Pathwise-Decomposition Obstruction}
\label{sec:tautology}

\citet{anhuo2026} prove the Polyak--Ruppert CLT for plain preconditioned
SGD via an exact pathwise decomposition of the
averaged error into a martingale term, a Taylor remainder, and a dynamic
remainder that isolates the preconditioner.  A natural strategy for the
SA-Adam setting is to seek an analogous decomposition---one that keeps
the martingale leading term in the noise $\xi_t$ and the Taylor term at
their canonical Polyak--Ruppert forms, leaving the momentum and
preconditioner contributions to a controllable remainder.  The following
proposition shows that any decomposition of this canonical form collapses
to a tautology: its dynamic remainder is forced to carry the full
averaged error with coefficient one (it equals
$\overline\Delta_n - \Xi_n - T_n$), so the decomposition cannot deliver
the CLT on its own.

\begin{proposition}[Tautological collapse of the canonical pathwise decomposition]
\label{prop:tautology}
Under the SA-Adam recursion~\eqref{eq:sa-adam} with $c_1 > 0$,
$\gamma \in (\alpha, 1)$, $\eta_t = \eta_0 t^{-\alpha}$,
$\alpha \in (1/2, 1)$, and the standing
Assumptions~\ref{ass:mg}--\ref{ass:iterate}, consider any exact
decomposition of the averaged error
\begin{equation}
\label{eq:purported-decomp}
  \overline\Delta_n = \Xi_n + T_n + R_n,\qquad
  \Xi_n := -\frac{1}{n}H^{-1}\sum_{t=1}^n \xi_t,\quad
  T_n := -\frac{1}{n}H^{-1}\sum_{t=1}^n u_t,
\end{equation}
whose leading martingale term $\Xi_n$ and Taylor term $T_n$ are fixed to
the canonical Polyak--Ruppert forms.  Then the dynamic remainder is
forced to equal, \emph{exactly},
\begin{equation}
\label{eq:tautological-Rn}
  R_n = H^{-1}\,\overline g_n,\qquad
  \overline g_n := \frac{1}{n}\sum_{t=1}^n g_t,
\end{equation}
the averaged gradient---with no surviving boundary term.  This identity
is purely algebraic: it uses only the local expansion of
Assumption~\ref{ass:quadratic} and the invertibility of $H$, not the
momentum schedule or the martingale structure.  Consequently $R_n$ is
not an independently controllable preconditioner remainder: it is
algebraically identical to $\overline\Delta_n - \Xi_n - T_n$, carrying
the full averaged error with coefficient one.  Using in addition the
iterate bound of Assumption~\ref{ass:iterate} to render the Taylor term
negligible ($\sqrt n\,T_n \to 0$), we find that $\sqrt n\,R_n \to 0$
holds if and only if
$H\overline\Delta_n = -\overline\xi_n + o_p(n^{-1/2})$---the asymptotic
linear representation underlying the Polyak--Ruppert CLT (from which the
CLT follows by the martingale CLT for $\sqrt n\,\overline\xi_n$ under the
usual conditions).
The buffer-telescoping/Abel-summation route that would generalize the
pathwise decomposition of \citet{anhuo2026} produces exactly this
remainder, and so cannot deliver the CLT without presupposing it.
\end{proposition}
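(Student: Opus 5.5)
The identity~\eqref{eq:tautological-Rn} follows from the gradient expansion of Assumption~\ref{ass:quadratic}, averaged and multiplied by $H^{-1}$. For each $t$ we have $g_t = H\Delta_t + u_t + \xi_t$ exactly, with $x_t\in\mathcal N$ almost surely. Averaging over $t=1,\dots,n$ gives $\overline g_n = H\overline\Delta_n + \overline u_n + \overline\xi_n$. Multiplying by $H^{-1}$, which is legitimate since $H\succeq\mu I$ by Assumption~\ref{ass:convex}, yields
\[
\overline\Delta_n = -H^{-1}\overline\xi_n - H^{-1}\overline u_n + H^{-1}\overline g_n = \Xi_n + T_n + H^{-1}\overline g_n .
\]
Any exact decomposition~\eqref{eq:purported-decomp} with $\Xi_n,T_n$ fixed defines $R_n := \overline\Delta_n-\Xi_n-T_n$, so $R_n = H^{-1}\overline g_n$ identically. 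This step uses nothing about $\rho_t$, $P_t$, or the martingale property, which justifies the claim that the identity is purely algebraic.

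For the equivalence, we first show $\sqrt n\,T_n\to 0$ in $L^1$, hence in probability. The bound $\|u_t\|\le L_R\|\Delta_t\|^2$ and Assumption~\ref{ass:iterate} give $\mathbb E\|u_t\|\le L_R C_\Delta t^{-\alpha}$. Therefore
\[
\sqrt n\,\mathbb E\|T_n\|\le \|H^{-1}\|_{\mathrm{op}}\,L_R C_\Delta\, n^{-1/2}\sum_{t\le n} t^{-\alpha}=O(n^{1/2-\alpha})\to0,
\]
since $\alpha>1/2$. It follows that $\sqrt n R_n = \sqrt n H^{-1}\overline g_n = \sqrt n(\overline\Delta_n + H^{-1}\overline\xi_n) + o_p(1)$. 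Multiplying by the fixed invertible $H$, $\sqrt n R_n\to 0$ in probability if and only if $H\overline\Delta_n = -\overline\xi_n + o_p(n^{-1/2})$. We then remark that the latter representation, combined with the martingale CLT for $n^{-1/2}\sum\xi_t$ (conditional covariance $S(x_t)\to S$), is exactly what yields the sandwich limit. Controlling $R_n$ is therefore equivalent to proving the CLT itself.

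The final claim is about the buffer-telescoping route, and it is the only step needing care. From~\eqref{eq:error-recursion}, $m_t-m_{t-1}=\rho_t(g_t-m_{t-1})$, so $g_t = m_{t-1}+\rho_t^{-1}(m_t-m_{t-1})$. Summing and applying Abel summation gives
\[
\sum_t g_t=\sum_t m_{t-1}+\rho_n^{-1}m_n-\sum_{t<n}(\rho_{t+1}^{-1}-\rho_t^{-1})m_t .
\]
Substituting $m_t = P_t^{-1}(\Delta_t-\Delta_{t+1})/\eta_t$ and telescoping again in $\Delta$ rewrites every piece in terms of $\Delta_t$ and boundary terms. We check that reassembling these pieces returns precisely $\sum_t g_t$, so the boundary terms cancel against the interior sums and no independent remainder survives. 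This cancellation is forced, because the resulting expression is an exact identity for the same quantity $H^{-1}\overline g_n$.

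The main obstacle is phrasing this last claim rigorously rather than by example. The plan is to argue by uniqueness, not to trace each telescoping. Whatever exact rearrangement the route produces, the first part shows the remainder must equal $H^{-1}\overline g_n$, so it carries the full averaged error with coefficient one and cannot be bounded without already knowing the asymptotic linear representation.
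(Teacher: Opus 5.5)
Your proposal is correct and follows the paper's proof essentially step for step. Averaging the gradient identity and comparing with the fixed leading terms gives $R_n = H^{-1}\overline g_n$. The $O(n^{1/2-\alpha})$ bound on $\sqrt n\,\mathbb E\|T_n\|$ then gives the equivalence with $H\overline\Delta_n = -\overline\xi_n + o_p(n^{-1/2})$. As in the paper, you settle the buffer-telescoping claim by noting that any exact rearrangement with $\Xi_n, T_n$ fixed must return the same $R_n$, rather than by tracing the Abel sums.

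One precision: negligibility of $\sqrt n\,R_n$ is equivalent to the asymptotic linear representation, which implies the CLT but is not implied by it. So your phrase ``equivalent to proving the CLT itself'' slightly overstates the conclusion.
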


\begin{proof}
By the local expansion of Assumption~\ref{ass:quadratic}, each gradient
obeys the exact identity $g_t = H\Delta_t + u_t + \xi_t$.  Averaging over
$t = 1, \ldots, n$ gives
$\overline g_n = H\overline\Delta_n + \overline u_n + \overline\xi_n$,
and applying $H^{-1}$ and rearranging,
\begin{equation}
\label{eq:taut-rearrange}
  \overline\Delta_n
    = H^{-1}\overline g_n - H^{-1}\overline u_n - H^{-1}\overline\xi_n
    = \Xi_n + T_n + H^{-1}\overline g_n .
\end{equation}
Comparing~\eqref{eq:taut-rearrange} with the posited
decomposition~\eqref{eq:purported-decomp}, whose leading terms
$\Xi_n, T_n$ are fixed, identifies the remainder \emph{exactly} as
$R_n = H^{-1}\overline g_n$, which is~\eqref{eq:tautological-Rn}.  This
is a pure algebraic consequence of the averaged gradient equation: no
buffer-endpoint or boundary term arises, and the identity is independent
of the momentum exponent $\gamma$ and of whatever Abel-summation or
buffer-telescoping manipulations one uses to reach it.  Any such
route---summing the un-bias-corrected increment
$\Delta_t - \Delta_{t+1} = \eta_t P_t m_t$ of~\eqref{eq:error-recursion}
and closing the $m_t$-dependence through the buffer telescoping $g_t =
\rho_t^{-1}(m_t - m_{t-1}) + m_{t-1}$---produces a decomposition of the
form~\eqref{eq:purported-decomp} and therefore returns this same $R_n$
(the full update merely replaces $\eta_t$ by the effective step
$\tilde\eta_t = \eta_t/\kappa^m_t$, which changes nothing in the algebra,
since the identity rests on the gradient expansion, not on the update).

The decomposition is therefore a \emph{tautology}.  Substituting
$\overline g_n = H\overline\Delta_n + \overline u_n + \overline\xi_n$
back into $R_n = H^{-1}\overline g_n$ returns
\begin{equation}
\label{eq:taut-Rn-expand}
  R_n = \overline\Delta_n + H^{-1}\overline u_n + H^{-1}\overline\xi_n
      = \overline\Delta_n - \Xi_n - T_n,
\end{equation}
the defining rearrangement of~\eqref{eq:purported-decomp}: the
``remainder'' carries the full averaged error $\overline\Delta_n$ with
coefficient one, not a decaying preconditioner increment.

It remains to record the CLT-scale equivalence.  The Taylor term is
negligible at the $\sqrt n$ scale: by Assumption~\ref{ass:quadratic}
$\|u_t\| \le L_R\|\Delta_t\|^2$, so the iterate bound of
Assumption~\ref{ass:iterate} gives
$\mathbb E\|u_t\| \le L_R C_\Delta t^{-\alpha}$ and hence
\[
  \sqrt n\,\mathbb E\|T_n\|
    \le \frac{\|H^{-1}\|_{\mathrm{op}}\,L_R C_\Delta}{\sqrt n}\sum_{t=1}^n t^{-\alpha}
    = O\!\big(n^{1/2-\alpha}\big) \xrightarrow[n\to\infty]{} 0
    \qquad (\alpha > 1/2),
\]
so $\sqrt n\,T_n \to 0$ in $L^1$, hence in probability.  By the
expansion~\eqref{eq:taut-Rn-expand}, $\sqrt n\,R_n =
\sqrt n\,(\overline\Delta_n + H^{-1}\overline\xi_n) - \sqrt n\,T_n$, so
$\sqrt n\,R_n \to 0$ in probability if and only if
$\sqrt n\,(\overline\Delta_n + H^{-1}\overline\xi_n) \to 0$, i.e.\ if and
only if $H\overline\Delta_n = -\overline\xi_n + o_p(n^{-1/2})$.  This is
exactly the asymptotic linear representation underlying the
Polyak--Ruppert CLT---from which the CLT follows by the martingale CLT
for $\sqrt n\,\overline\xi_n$ under the usual conditions: establishing the
remainder negligible is
logically equivalent to that representation, so the pathwise route cannot
deliver the CLT on its own.

In particular this canonical-leading-term framework admits no
increment-sum representation
$R_n = \frac{1}{n}\sum_t \eta_{t-1}^{-1}(M_t - M_{t-1})\Delta_t + (\mathrm{boundary})$ with
decaying increments $\|M_t - M_{t-1}\|_{\mathrm{op}} \lesssim t^{-\beta}$,
$\beta > (\alpha+1)/2$---the form that would force $\sqrt n\,R_n \to 0$ via the
stabilization bounds of \citet{anhuo2026}---without circularity, since by the
equivalence just shown any such representation already encodes the CLT-scale
cancellation (that the remainder \emph{is} negligible for $\gamma < 1$ is
established later by the augmented-state analysis,
Appendix~\ref{app:clt-proof}).  At the boundary $\gamma = 1$ the obstruction is
starker still: $\sqrt n\,\overline g_n$ can be asymptotically
non-degenerate---already in a scalar model
(Remark~\ref{rem:Rnz-gamma-necessary})---so $\sqrt n\,R_n$ need not vanish and
no canonical-leading-term decomposition~\eqref{eq:purported-decomp} can hold
with a negligible $o_p(n^{-1/2})$ remainder.
\end{proof}

\emph{Why the obstruction is structural.}
The collapse reflects a structural feature of momentum, not a particular
manipulation.  Because the error recursion is non-Markovian in
$\Delta_t$ alone, any exact identity that closes it through the buffer's
telescoping carries the entire history as the single object
$H^{-1}\overline g_n$, which by the gradient identity equals
$\overline\Delta_n$ plus the very terms one seeks to isolate; the
putative remainder is then the averaged error itself---not an
increment-sum remainder with decaying increments $M_t - M_{t-1}$---and
its negligibility is equivalent to the leading-order cancellation
$\sqrt n\,(H\overline\Delta_n + \overline\xi_n)\to 0$ underlying the
Polyak--Ruppert CLT.  This---independent of $\gamma$---motivates the
augmented-state route below, which treats the buffer $m_t$ as an
additional state variable.

\subsection{Joint Linear Recursion and Its Spectrum}
\label{sec:joint-spectrum}

We work throughout with the un-bias-corrected centered
recursion~\eqref{eq:error-recursion}; as noted in
Section~\ref{sec:sa-adam-defn}, the first-moment bias correction is
reinstated as an effective step size $\tilde\eta_t = \eta_t/\kappa^m_t$
(Section~\ref{sec:bias-correction}) and leaves the linear-algebraic
structure below unchanged.

Define the joint state
\[
  z_t := \begin{pmatrix} \Delta_t \\ m_{t-1} \end{pmatrix} \in \mathbb R^{2d},
\]
where by convention $m_0 := 0$.  Both components are
$\mathcal F_{t-1}$-measurable.

Substituting the recursion~\eqref{eq:error-recursion}:
\begin{align*}
  \Delta_{t+1} &= (I - \eta_t \rho_t P_t H)\,\Delta_t
    - \eta_t (1-\rho_t) P_t m_{t-1} - \eta_t \rho_t P_t(u_t + \xi_t),\\
  m_t &= \rho_t H \Delta_t + (1-\rho_t) m_{t-1} + \rho_t (u_t + \xi_t).
\end{align*}
In matrix form,
\begin{equation}
\label{eq:joint-recursion}
  z_{t+1} = \Phi_t\, z_t + B_t\,(u_t + \xi_t),
\end{equation}
with
\[
  \Phi_t = \begin{pmatrix}
    I - \eta_t \rho_t P_t H & -\eta_t(1-\rho_t) P_t \\
    \rho_t H & (1-\rho_t) I
  \end{pmatrix},
  \qquad
  B_t = \begin{pmatrix} -\eta_t \rho_t P_t \\ \rho_t I \end{pmatrix}.
\]

The drift matrix in the standard linear-SA form
$z_{t+1} = (I - \eta_t L_t) z_t + B_t(u_t + \xi_t)$ is
$L_t := (I - \Phi_t)/\eta_t$:
\begin{equation}
\label{eq:Lt}
  L_t = \begin{pmatrix}
    \rho_t P_t H & (1-\rho_t) P_t \\
    -\tau_t H & \tau_t I
  \end{pmatrix},
\end{equation}
where we introduce the abbreviation
$\tau_t := \rho_t / \eta_t \sim (c_1/\eta_0)\, t^{\alpha-\gamma}$
(with equality for the unshifted schedule), which is $\to 0$ under
$\gamma > \alpha$.

Having set up the joint drift, we turn to its spectrum.  The central
spectral fact about $L_t$ is the following.

\begin{lemma}[Spectrum of $L_t$]
\label{lem:spectrum}
For each $t \ge 1$, the eigenvalues of $L_t$ (counted with multiplicity)
are the $2d$ roots of the $d$ scalar quadratics
\begin{equation}
\label{eq:char-poly}
  \lambda^2 - (\tau_t + \rho_t \mu_i^{(t)})\,\lambda + \tau_t \mu_i^{(t)} = 0,
  \qquad i = 1, \ldots, d,
\end{equation}
where $\mu_1^{(t)}, \ldots, \mu_d^{(t)}$ are the eigenvalues of $P_t H$
(real and positive, since $P_t H$ is similar to the symmetric positive
definite $P_t^{1/2} H P_t^{1/2}$).
\end{lemma}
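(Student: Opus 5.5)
We work from the block form \eqref{eq:Lt} of $L_t$ and reduce the $2d\times 2d$ eigenvalue problem to $d$ scalar quadratics. There are two natural routes, and we plan to use the first.

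\emph{Route 1: block determinant.} Take the determinant of $\lambda I - L_t$. Its lower-right block is $(\lambda-\tau_t)I$, and this block commutes with the lower-left block $\tau_t H$, since it is a multiple of the identity. For $\lambda\neq\tau_t$ the Schur-complement formula (equivalently, the commuting-block determinant identity) gives
\[
  \det(\lambda I - L_t)
  = \det\!\Big((\lambda-\tau_t)(\lambda I-\rho_tP_tH) + \tau_t(1-\rho_t)P_tH\Big).
\]
Expanding the argument gives the matrix
\[
  \lambda^2 I-\lambda(\tau_t I+\rho_tP_tH)+\tau_tP_tH,
\]
where the $\rho_t$ terms combine as $\tau_t\rho_t P_tH+\tau_t(1-\rho_t)P_tH=\tau_tP_tH$. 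This matrix is a polynomial $q_\lambda(P_tH)$ in the single matrix $P_tH$.

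Both sides of the identity are polynomials in $\lambda$ and agree for all $\lambda\neq\tau_t$, so the identity extends to every $\lambda$. Next we use that $P_tH=P_t^{1/2}(P_t^{1/2}HP_t^{1/2})P_t^{-1/2}$. Here $P_t$ is diagonal with positive entries, and $P_t^{1/2}HP_t^{1/2}$ is symmetric positive definite because $H\succeq\mu I$ by Assumption~\ref{ass:convex}. Hence $P_tH$ is diagonalizable with real positive eigenvalues $\mu_i^{(t)}$. It follows that
\[
  \det q_\lambda(P_tH)=\prod_i q_\lambda(\mu_i^{(t)}),
\]
whose factors are exactly the quadratics \eqref{eq:char-poly}. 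The roots of this product, counted with multiplicity, are therefore the $2d$ eigenvalues.

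\emph{Route 2 (alternative): invariant subspaces.} Let $w_i$ be an eigenvector of $P_tH$, with $P_tHw_i=\mu_i w_i$, and set $v_i:=H w_i$. Then
\[
  L_t\binom{aw_i}{bv_i}=\binom{(a\rho_t\mu_i+b(1-\rho_t)\mu_i)w_i}{\tau_t(b-a)v_i},
\]
using $P_tv_i=\mu_i w_i$. So each two-dimensional span of $(w_i,0)$ and $(0,v_i)$ is invariant, and $L_t$ acts on it by the $2\times 2$ matrix
\[
  \begin{pmatrix}\rho_t\mu_i & (1-\rho_t)\mu_i\\ -\tau_t & \tau_t\end{pmatrix}.
\]
This matrix has trace $\tau_t+\rho_t\mu_i$ and determinant $\tau_t\mu_i$, which matches \eqref{eq:char-poly}. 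The spans together fill $\mathbb R^{2d}$ because $H$ is invertible, so this gives a block-diagonalization and the same multiplicity count.

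The only delicate point is the algebraic-multiplicity bookkeeping: the excluded value $\lambda=\tau_t$ in the Schur step, and repeated $\mu_i$. The polynomial-identity argument in Route 1 settles both, so no genuine obstacle is expected.
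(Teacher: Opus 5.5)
Your Route 1 is correct and is essentially the paper's proof. The paper also takes a Schur complement on the lower-right block $(\lambda-\tau_t)I$ to reach $\det\bigl(\lambda^2 I-\lambda(\tau_t I+\rho_tP_tH)+\tau_tP_tH\bigr)$, extends it to $\lambda=\tau_t$ by polynomial identity, and factors it through the diagonalizability of $P_tH$. Your Route 2 is also correct; it is essentially the scalar reduction the paper uses later in Lemma~\ref{lem:Qsym-id}. Neither route needs $P_t$ to be diagonal, only symmetric positive definite, which matters for the full-matrix variant.
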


\begin{proof}
The reduction of the $2d \times 2d$ spectrum to $d$ scalar quadratics in
the eigenvalues of $P_t H$ is the standard reduction for momentum-type
and two-time-scale block drifts, obtained by diagonalizing the single
matrix $P_t H$ (similar to the symmetric positive-definite
$P_t^{1/2} H P_t^{1/2}$)---not by simultaneously diagonalizing $P_t$ and
$H$, which do not commute in general (cf.\ the heavy-ball eigenvalue
analysis of \citet{polyak1964methods, lessard2016analysis} and the
two-time-scale stochastic-approximation analyses of
\citet{konda2004actor, mokkadem2006convergence}); we record the
time-varying, preconditioned form needed here.

We compute the characteristic polynomial
$\chi_t(\lambda) := \det(\lambda I_{2d} - L_t)$.  Writing $L_t$ in
$d \times d$ blocks as in~\eqref{eq:Lt},
\[
  \lambda I_{2d} - L_t =
  \begin{pmatrix}
    \lambda I - \rho_t P_t H & -(1-\rho_t) P_t \\
    \tau_t H & (\lambda - \tau_t) I
  \end{pmatrix}.
\]
The lower-right block $(\lambda - \tau_t) I$ is invertible for
$\lambda \ne \tau_t$, so the Schur-complement determinant formula gives
\[
  \chi_t(\lambda)
  = \det\!\big((\lambda - \tau_t) I\big)\,
    \det\!\Big(\lambda I - \rho_t P_t H
      + \tfrac{(1-\rho_t)\tau_t}{\lambda - \tau_t}\,P_t H\Big),
\]
where the Schur correction
$-\big[-(1-\rho_t)P_t\big]\big[(\lambda-\tau_t)I\big]^{-1}\big[\tau_t H\big]
= \tfrac{(1-\rho_t)\tau_t}{\lambda-\tau_t}\,P_t H$ uses that the
off-diagonal blocks are scalar multiples of $P_t$ and $H$.  Collecting the
$P_t H$ terms through
$\rho_t - \tfrac{(1-\rho_t)\tau_t}{\lambda-\tau_t}
 = \tfrac{\rho_t\lambda - \tau_t}{\lambda-\tau_t}$ and absorbing one factor
of $(\lambda - \tau_t)$ into the determinant along each of the $d$
eigendirections of $P_t H$ yields the polynomial identity
\begin{equation}
\label{eq:char-det-identity}
  \chi_t(\lambda)
  = \det\!\big(\lambda(\lambda - \tau_t) I - (\rho_t \lambda - \tau_t)\,P_t H\big),
\end{equation}
derived for $\lambda \ne \tau_t$ and hence, both sides being polynomials
in $\lambda$ of degree $2d$ that agree at all but finitely many points,
for every $\lambda \in \mathbb C$.  No $\lambda$-dependent denominator
survives in~\eqref{eq:char-det-identity}, so it is valid at
$\lambda = \tau_t$ as well.

Because $P_t H$ is similar to the symmetric positive-definite matrix
$P_t^{1/2} H P_t^{1/2}$, it is diagonalizable with real positive
eigenvalues $\mu_1^{(t)}, \ldots, \mu_d^{(t)}$.  For any scalars $a, b$,
$\det(a I - b\,P_t H) = \prod_{i=1}^d (a - b\,\mu_i^{(t)})$; applying this
to~\eqref{eq:char-det-identity} with $a = \lambda(\lambda - \tau_t)$ and
$b = \rho_t\lambda - \tau_t$ gives
\[
  \chi_t(\lambda)
  = \prod_{i=1}^d \Big[\lambda(\lambda - \tau_t)
      - (\rho_t\lambda - \tau_t)\,\mu_i^{(t)}\Big]
  = \prod_{i=1}^d \Big[\lambda^2
      - (\tau_t + \rho_t\mu_i^{(t)})\,\lambda + \tau_t\mu_i^{(t)}\Big].
\]
Thus the $2d$ eigenvalues of $L_t$, counted with multiplicity, are exactly
the roots of the $d$ quadratics~\eqref{eq:char-poly}.  The identity is
exact for every $\lambda$ and requires no genericity hypothesis; in
particular it captures the boundary case $\rho_t = 1$, where each
quadratic factors as $(\lambda - \tau_t)(\lambda - \mu_i^{(t)})$, so that
$\lambda = \tau_t$ is then an eigenvalue of multiplicity $d$---consistent
with $L_t$ becoming block lower-triangular at $\rho_t = 1$.
\end{proof}

From this scalar reduction we read off the stability of the joint drift:
the next lemma shows every eigenvalue has positive real part of order
$\tau_t$, so that $L_t$ is positive-stable---the property the Lyapunov
mean-square bound of Section~\ref{sec:iterate-mse} requires.

\begin{lemma}[Positive stability of $L_t$ with rate $\tau_t$]
\label{lem:hurwitz}
Assume the uniform spectral bound
$0 < \mu_- \le \mu_i^{(t)} \le \mu_+ < \infty$ on the eigenvalues of
$P_t H$, uniformly in $i$ and $t$.  (The lower bound is the boundedness
$\sup_t \|M_t\|_{\mathrm{op}} < \infty$ of Assumption~\ref{ass:stab},
with $M_t = (P_t H)^{-1}$; the upper bound is the uniform ellipticity of
$P_t$.  For SA-Adam both are verified in
Proposition~\ref{prop:sa-adam-stab}.)  Then for all $t$ sufficiently
large, the discriminant of~\eqref{eq:char-poly} is negative, and every
eigenvalue $\lambda$ of $L_t$ belongs to a complex-conjugate pair
satisfying
\[
  \mathrm{Re}(\lambda) = \frac{\tau_t}{2}\,(1 + o(1)),\qquad
  |\mathrm{Im}(\lambda)| \asymp \sqrt{\tau_t},
\]
uniformly in $i$.  In particular $L_t$ is \emph{positive stable}---every
eigenvalue lies in the open right half-plane, equivalently $-L_t$ is
Hurwitz---with a spectral gap $\mathrm{Re}(\lambda) \asymp \tau_t$ that
\emph{vanishes} as $t \to \infty$ (it is not uniform in $t$).  Moreover
the corresponding discrete eigenvalue $1 - \eta_t\lambda$ of $(I -
\eta_t L_t)$ has modulus
\begin{equation}
\label{eq:onestep-modulus}
  |1 - \eta_t\lambda| = \sqrt{1 - \rho_t}
  = 1 - \tfrac{\rho_t}{2} + O(\rho_t^2)
\end{equation}
\emph{exactly}, for every eigenvalue (both members of each conjugate
pair) and independently of $i$.
\end{lemma}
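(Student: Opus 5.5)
The proof works directly from the scalar reduction of Lemma~\ref{lem:spectrum}. Every eigenvalue of $L_t$ is a root of
\[
  q_i(\lambda)=\lambda^2-(\tau_t+\rho_t\mu_i)\lambda+\tau_t\mu_i .
\]
So it suffices to analyze one quadratic with $\mu_i\in[\mu_-,\mu_+]$ and obtain bounds uniform in $i$.

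\emph{Discriminant.} The discriminant is
\[
  D_i=(\tau_t+\rho_t\mu_i)^2-4\tau_t\mu_i .
\]
Since $\rho_t=\eta_t\tau_t$ with $\eta_t\to0$, and $\tau_t\to0$ under $\gamma>\alpha$, we have
\[
  (\tau_t+\rho_t\mu_i)^2=\tau_t^2(1+\eta_t\mu_i)^2\le \tau_t^2(1+\eta_t\mu_+)^2 .
\]
This is $o(\tau_t)$, while $4\tau_t\mu_i\ge4\tau_t\mu_-$. Hence $D_i<0$ for all $t\ge t_0$, uniformly in $i$, because the threshold $\tau_t(1+\eta_t\mu_+)^2<4\mu_-$ does not depend on $i$.

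\emph{Real and imaginary parts.} With $D_i<0$ the roots form a conjugate pair, so
\[
  \mathrm{Re}\,\lambda=\tfrac12(\tau_t+\rho_t\mu_i)=\tfrac{\tau_t}{2}(1+\eta_t\mu_i)=\tfrac{\tau_t}{2}(1+o(1)),
\]
where the $o(1)$ term is bounded by $\eta_t\mu_+$, uniformly in $i$. For the imaginary part,
\[
  |\mathrm{Im}\,\lambda|=\tfrac12\sqrt{4\tau_t\mu_i-\tau_t^2(1+\eta_t\mu_i)^2}.
\]
This lies between $\sqrt{\tau_t\mu_-}(1-o(1))$ and $\sqrt{\tau_t\mu_+}$, which gives $|\mathrm{Im}\,\lambda|\asymp\sqrt{\tau_t}$. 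Positive stability follows because $\mathrm{Re}\,\lambda>0$. The gap $\asymp\tau_t$ vanishes since $\tau_t\to0$.

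\emph{Exact one-step modulus.} For a conjugate pair, $|1-\eta_t\lambda|^2$ equals the product of the two conjugates, $(1-\eta_t\lambda)(1-\eta_t\bar\lambda)$. By Vieta's formulas this product is
\[
  1-\eta_t(\lambda+\bar\lambda)+\eta_t^2\lambda\bar\lambda
  =1-\eta_t(\tau_t+\rho_t\mu_i)+\eta_t^2\tau_t\mu_i .
\]
Substituting $\eta_t\tau_t=\rho_t$ gives $1-\rho_t-\eta_t\rho_t\mu_i+\eta_t\rho_t\mu_i=1-\rho_t$, and the $\mu_i$-dependence cancels exactly. The cleaner way to see this is that $\det\Phi_t=\det(I-\eta_tL_t)$ restricted to each $2\times2$ block is $1-\rho_t$, which matches the determinant of the block $\Phi_t$. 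Taking square roots, with $\rho_t\in(0,1)$, gives $\sqrt{1-\rho_t}=1-\rho_t/2+O(\rho_t^2)$ for both members of each pair.

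\emph{Main point of care.} The main obstacle is only making the ``$t$ sufficiently large'' threshold and the $o(1)$ terms uniform in $i$ and in the random $P_t$. This uses nothing beyond the deterministic bounds $\mu_-,\mu_+$ together with the deterministic sequences $\eta_t$ and $\tau_t$.
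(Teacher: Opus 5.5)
Your proof is correct and follows the paper's argument essentially step for step. The discriminant is negative because $\tau_t\to0$ and $\rho_t=\eta_t\tau_t$, with thresholds uniform in $\mu_i\in[\mu_-,\mu_+]$; the real part is $\tfrac12(\tau_t+\rho_t\mu_i)=\tfrac{\tau_t}{2}(1+O(\eta_t))$; and the exact modulus $|1-\eta_t\lambda|^2=1-\eta_t(\tau_t+\rho_t\mu_i)+\eta_t^2\tau_t\mu_i=1-\rho_t$ comes from the sum and product of the conjugate roots. Your factorization $(\tau_t+\rho_t\mu_i)^2=\tau_t^2(1+\eta_t\mu_i)^2$ is only tidier bookkeeping than the paper's bracketed form of the discriminant, and your determinant remark is the same observation the paper uses in Lemma~\ref{lem:Qsym-decrease}.
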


\begin{proof}
The qualitative picture---a complex-conjugate spectrum whose real part
is of the slow order $\tau_t$ and whose imaginary part scales as
$\sqrt{\tau_t}$---is the classical heavy-ball phenomenon
\citep{polyak1964methods, lessard2016analysis}; we derive its precise
two-time-scale, preconditioned form from the
quadratics~\eqref{eq:char-poly}.  Fix $t$ and write
$\mu := \mu_i^{(t)} \in [\mu_-, \mu_+]$.  The two roots of
$\lambda^2 - (\tau_t + \rho_t\mu)\lambda + \tau_t\mu = 0$ have discriminant
\begin{equation}
\label{eq:hurwitz-disc}
  \Delta = (\tau_t + \rho_t\mu)^2 - 4\tau_t\mu
  = -4\tau_t\mu\Bigl(1 - \tfrac{\tau_t}{4\mu} - \tfrac{\rho_t}{2}
      - \tfrac{\rho_t^2\mu}{4\tau_t}\Bigr).
\end{equation}
Each bracketed correction is $o(1)$ uniformly in $i$: $\tau_t \to 0$,
$\rho_t \to 0$, and $\rho_t^2/\tau_t = \rho_t\eta_t \to 0$ (using
$\rho_t/\tau_t = \eta_t$), while $\mu \in [\mu_-, \mu_+]$.  Hence for all
$t$ large enough $\Delta = -4\tau_t\mu\,(1+o(1)) < 0$, and the two roots
form a complex-conjugate pair $\lambda = \tfrac12(\tau_t + \rho_t\mu)
\pm \tfrac{i}{2}\sqrt{-\Delta}$.

\emph{Real part.}  $\mathrm{Re}(\lambda) = \tfrac12(\tau_t + \rho_t\mu)$;
since $\rho_t\mu \le \mu_+\rho_t$ and $\rho_t/\tau_t = \eta_t \to 0$,
$\mathrm{Re}(\lambda)
  = \tfrac{\tau_t}{2}\Bigl(1 + \tfrac{\rho_t\mu}{\tau_t}\Bigr)
  = \tfrac{\tau_t}{2}\bigl(1 + O(\eta_t)\bigr)$
uniformly in $i$, so $\mathrm{Re}(\lambda) \ge \tfrac{\tau_t}{2}(1-o(1))
> 0$ for all large $t$.  Thus every one of the $2d$ eigenvalues of $L_t$
has strictly positive real part, bounded below by
$\tfrac{\tau_t}{2}(1-o(1))$, so $L_t$ is positive stable (equivalently
$-L_t$ is Hurwitz); the spectral gap is of order $\tau_t$ and vanishes
as $t\to\infty$.

\emph{Imaginary part.}  From~\eqref{eq:hurwitz-disc},
$|\mathrm{Im}(\lambda)|^2 = -\Delta/4 = \tau_t\mu\,(1+o(1))$, so
$|\mathrm{Im}(\lambda)| = \sqrt{\tau_t\mu}\,(1+o(1))$ and, with $\mu \in
[\mu_-, \mu_+]$, $|\mathrm{Im}(\lambda)| \asymp \sqrt{\tau_t}$ uniformly
in $i$.

\emph{Discrete eigenvalue modulus.}  The sum and product of the two roots are,
\emph{exactly}, $2\mathrm{Re}(\lambda) = \tau_t + \rho_t\mu$ and
$|\lambda|^2 = \lambda\bar\lambda = \tau_t\mu$.  Hence, using
$\eta_t\tau_t = \rho_t$ and $\eta_t^2\tau_t\mu = \eta_t\rho_t\mu$,
\[
  |1 - \eta_t\lambda|^2
  = 1 - 2\eta_t\mathrm{Re}(\lambda) + \eta_t^2|\lambda|^2
  = 1 - \eta_t(\tau_t + \rho_t\mu) + \eta_t^2\tau_t\mu
  = 1 - \rho_t - \eta_t\rho_t\mu + \eta_t\rho_t\mu
  = 1 - \rho_t,
\]
the two $\mu$-dependent terms cancelling identically.  Therefore
$|1 - \eta_t\lambda| = \sqrt{1 - \rho_t} = 1 - \tfrac{\rho_t}{2} +
O(\rho_t^2)$ for every eigenvalue, independently of $i$---this
is~\eqref{eq:onestep-modulus}.
\end{proof}

\emph{Two-time-scale structure.}
For $\alpha \in (1/2, 1)$ and $\gamma \in (\alpha, 1)$ the three rates
are strictly ordered, $\tau_t \asymp t^{-(\gamma-\alpha)} \gg \eta_t
\asymp t^{-\alpha} \gg \rho_t \asymp t^{-\gamma}$ (from $0 <
\gamma-\alpha < \alpha < \gamma$, the middle inequality since
$\gamma < 1 < 2\alpha$).  The buffer self-drift block of $L_t$ has
spectral norm $\asymp \tau_t$ and the iterate self-drift block
$\asymp \rho_t$, the signature of a
two-time-scale system \citep{borkar2008stochastic,
mokkadem2006convergence}: the iterate (step $\eta_t$) is \emph{fast} and
the buffer (step $\rho_t$, smaller since $\gamma > \alpha$) \emph{slow},
with separation factor $\rho_t/\eta_t = \tau_t \to 0$.  The joint
contraction rate $\mathrm{Re}(\lambda) \asymp \tau_t$, however, governs
the asymptotic analysis.

\subsection{Augmented Mean-Square Bound}
\label{sec:iterate-mse}

The positive-stable, two-time-scale spectrum now underpins a mean-square
bound.  The following proposition builds an explicit Lyapunov function for
the joint drift and establishes the anisotropic two-time-scale split,
supplying hypothesis~(b) of the CLT (Theorem~\ref{thm:augmented-clt})
through the buffer rate $\mathbb E\|m_{t-1}\|^2 = O(t^{-\gamma})$.

\begin{proposition}[Augmented Lyapunov bound]
\label{prop:augmented-mse}
Adopt Assumptions~\ref{ass:mg}--\ref{ass:quadratic} and
Assumption~\ref{ass:stab} (the iterate bound of
Assumption~\ref{ass:iterate} is \emph{not} assumed---it is the
conclusion), bounded gradients $\|g_t\| \le G$ a.s., and the
fourth-moment stability condition
$\mathbb E\|\Delta_t\|^4 = O(t^{-2\alpha})$
(Assumption~\ref{ass:fourth}).  Assume further that the preconditioner
$P_t$ is \emph{predictable} ($\mathcal F_{t-1}$-measurable) and satisfies
the standing conditions
\begin{equation}
\label{eq:prop-precond}
  p_- I \preceq P_t \preceq p_+ I,\quad
  \mu_- I \preceq P_t^{1/2} H P_t^{1/2} \preceq \mu_+ I,\quad
  \|P_{t+1} - P_t\|_{\mathrm{op}} \le C_P\,t^{-1},
\end{equation}
for $t$-independent constants $0 < p_- \le p_+ < \infty$,
$0 < \mu_- \le \mu_+ < \infty$, $C_P < \infty$ (these are the conditions
restated as~\eqref{eq:appA-standing} in Appendix~\ref{app:lyapunov}; for
SA-Adam $P_t = \mathrm{Diag}(\hat v_{t-1})^{-1/2}$ is predictable and they
follow from bounded gradients and $\epsilon > 0$), and the momentum
schedule $\rho_t = c_1/t^\gamma$ (shifted as in
Section~\ref{sec:sa-adam-defn} when $c_1 \ge 1$, so that
$\rho_t \in (0,1)$) with $c_1 > 0$ and $\gamma \in (\alpha, 1)$.  Then,
with $V_t = z_t^\top Q_t z_t$ the
Lyapunov function of Appendix~\ref{app:lyapunov}, $\mathbb E V_t =
O(\eta_t)$, and consequently---via the anisotropic bound
$V_t \ge c_0(\|\Delta_t\|^2 + \tau_t^{-1}\|m_{t-1}\|^2)$ of
Lemma~\ref{lem:Qsym-bounds}---the two-time-scale split
\begin{equation}
\label{eq:prop-mse-split}
  \mathbb E\|z_t\|^2 = O(t^{-\alpha}),
  \qquad
  \mathbb E\|\Delta_t\|^2 = O(t^{-\alpha}),
  \qquad
  \mathbb E\|m_{t-1}\|^2 = O(\tau_t\eta_t) = O(t^{-\gamma}).
\end{equation}
In particular, Assumption~\ref{ass:iterate} is a conclusion here rather
than a separate hypothesis---indeed, given Assumption~\ref{ass:fourth},
the second-moment rate already follows by Jensen's inequality, so the
proposition's distinctive content is instead the buffer rate
$\mathbb E\|m_{t-1}\|^2 = O(t^{-\gamma})$, which supplies
hypothesis~(b),~\eqref{eq:clt-stability}, of
Theorem~\ref{thm:augmented-clt}.
\end{proposition}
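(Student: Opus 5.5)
The plan is a Lyapunov drift argument for the joint chain~\eqref{eq:joint-recursion}, using the quadratic form $V_t = z_t^\top Q_t z_t$ of Appendix~\ref{app:lyapunov}. We take from there the exact one-step contraction~\eqref{eq:Qsym-onestep} and the two-sided anisotropic bounds of Lemma~\ref{lem:Qsym-bounds}: $c_0(\|\Delta_t\|^2+\tau_t^{-1}\|m_{t-1}\|^2)\le V_t\le C_0(\|\Delta_t\|^2+\tau_t^{-1}\|m_{t-1}\|^2)$.

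The symmetrizer $Q_t$ is built from the $2\times2$ Cayley--Hamilton reduction along the eigendirections of $P_t^{1/2}HP_t^{1/2}$. Lemma~\ref{lem:hurwitz} shows that every eigenvalue of $\Phi_t = I-\eta_tL_t$ has modulus exactly $\sqrt{1-\rho_t}$. The first step is therefore to record
\[
\Phi_t^\top Q_t\Phi_t \preceq (1-\rho_t)\,Q_t .
\]
Next we control the time variation. Since $Q_{t+1}$ depends on $(P_{t+1},\tau_{t+1})$, and $\|P_{t+1}-P_t\|\le C_Pt^{-1}$ while $|\tau_{t+1}/\tau_t-1|=O(t^{-1})$, we get $Q_{t+1}\preceq(1+Ct^{-1})Q_t$. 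Because $\gamma<1$, the perturbation $t^{-1}=o(\rho_t)$, so for large $t$ the net contraction is $1-\rho_t/2$.

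Expanding $V_{t+1}$ conditionally on $\mathcal F_{t-1}$ splits it into three kinds of terms:
\begin{itemize}
\item The cross term $2(\Phi_tz_t)^\top Q_{t+1}B_t\xi_t$ has zero conditional mean, since $z_t$, $P_t$ and $Q_{t+1}$ are all predictable. Here $Q_{t+1}$ is a function of $P_{t+1}$, which involves $g_t$. To handle this I would write $Q_{t+1}=Q_t+(Q_{t+1}-Q_t)$ and bound the correction by $O(t^{-1})$ times the noise, absorbing it as a higher-order term.
\item The noise quadratic $\mathbb E[\xi_t^\top B_t^\top Q_{t+1}B_t\xi_t]$. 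Here $B_t$ has iterate block $\eta_t\rho_t$ and buffer block $\rho_t$, and the buffer weight is $\tau_t^{-1}$. This gives $O(\rho_t^2\tau_t^{-1}+\eta_t^2\rho_t^2)=O(\rho_t\eta_t)$ times $\operatorname{tr}\overline S$.
\item The Taylor terms $u_t$. They enter through a cross term, bounded by Cauchy--Schwarz and Young as $\tfrac{\rho_t}{4}\mathbb E V_t+C\rho_t^{-1}\mathbb E\|B_t^\top Q u_t\|^2$, and a quadratic term. Using $\|u_t\|\le L_R\|\Delta_t\|^2$ and Assumption~\ref{ass:fourth}, these are bounded by $O(\rho_t\tau_t^{-1}t^{-2\alpha})=O(\rho_t\eta_t\cdot t^{-\alpha})=o(\rho_t\eta_t)$.
\end{itemize}
Combining the three gives the scalar recursion
\[
\mathbb E V_{t+1}\le(1-\tfrac{\rho_t}{4})\,\mathbb E V_t + C\rho_t\eta_t .
\]
A standard Chung-type lemma then yields $\mathbb E V_t=O(\eta_t)$. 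The lemma applies because the ratio $\eta_{t}/\eta_{t+1}=1+O(t^{-1})$ and $t^{-1}=o(\rho_t)$, so one can show inductively that $\mathbb E V_t\le K\eta_t$. The lower anisotropic bound then gives $\mathbb E\|\Delta_t\|^2=O(t^{-\alpha})$ and $\mathbb E\|m_{t-1}\|^2=O(\tau_t\eta_t)=O(\rho_t)=O(t^{-\gamma})$. Their sum gives the bound on $\mathbb E\|z_t\|^2$.

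The main obstacle is the exact one-step contraction of $V_t$ uniformly in $t$, with constants of the anisotropic bounds independent of $t$ even though the spectral gap $\tau_t$ vanishes and $L_t$ is non-normal. A generic Lyapunov-equation solution would lose powers of $\tau_t$. I would rely on the closed-form symmetrizer of Appendix~\ref{app:lyapunov}, and verify that its $t$-increments are $O(t^{-1})$ relative to $Q_t$. A secondary subtlety is the predictability of $Q_{t+1}$ in the martingale cross term. If the increment-correction argument is too lossy, I would fall back on using $Q_t$ throughout and pay the $O(t^{-1})V_t$ variation term separately.
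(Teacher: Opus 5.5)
Your proposal is correct and follows the paper's architecture: the symmetrizer $Q_t$ with the exact identity \eqref{eq:Qsym-onestep}, the anisotropic bounds of Lemma~\ref{lem:Qsym-bounds}, a noise term of order $\eta_t\rho_t$, Taylor terms made $o(\eta_t\rho_t)$ by Assumption~\ref{ass:fourth}, and the induction $\mathbb E V_t\le K\eta_t$. It differs only in how the time variation of $Q_t$ is handled.

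You dominate the time variation pathwise, $Q_{t+1}\preceq(1+Ct^{-1})Q_t$, which is valid. The reason, however, is the anisotropic lower bound, not ``relative $O(t^{-1})$ increments''. The off-diagonal block of $Q_t$ is of order one, so blockwise relative smallness does not by itself give a Loewner inequality. Write $D_t=Q_{t+1}-Q_t$, with $D_{11,t}=0$, $\|D_{12,t}\|=O(t^{-\alpha-1})$ and $\|D_{22,t}\|=O(\tau_t^{-1}t^{-1})$. Young's inequality together with $t^{-2\alpha}\le\tau_t^{-1}$ gives $D_t\preceq Ct^{-1}\mathrm{diag}(I,\tau_t^{-1}I)\preceq (C/c_0)\,t^{-1}Q_t$.

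Apply this before conditioning, so that you can condition with the predictable $Q_t$ throughout. This also removes your cross-term worry. Your assertion that $Q_{t+1}$ is predictable is false, as you yourself note, but you never need it.

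The paper instead conditions with $Q_t$ and bounds $|\mathbb E[z_{t+1}^\top D_t z_{t+1}]|$ in expectation, by propagating $\mathbb E\|m_t\|^2$ and $\mathbb E\|\Delta_{t+1}\|^2$ one step (Lemma~\ref{lem:Qsym-timevar}); this is essentially your stated fallback. That route gives an explicit feedback constant $C_\sharp^\ast$, but it needs more bookkeeping and a second use of the fourth-moment bound. Yours is shorter and uses Assumption~\ref{ass:fourth} only for the Taylor term.

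Two small slips:
\begin{itemize}
\item The contraction $\Phi_t^\top Q_t\Phi_t\preceq(1-\rho_t)Q_t$ does not follow from the eigenvalue moduli in Lemma~\ref{lem:hurwitz}. $\Phi_t$ is non-normal, so its spectrum says nothing about a particular quadratic form. The contraction is the exact identity \eqref{eq:Qsym-onestep}, which you do cite.
\item In the Taylor bullet, $\rho_t\tau_t^{-1}t^{-2\alpha}=\eta_t t^{-2\alpha}$ is not $O(\rho_t\eta_t t^{-\alpha})$; that would require $\gamma\le\alpha$. It is nonetheless $o(\rho_t\eta_t)$, because $\gamma<1<2\alpha$, so your conclusion stands.
\end{itemize}
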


\begin{proof}[Proof sketch; full proof in Appendix~\ref{app:lyapunov}]
Standard Lyapunov argument for linear SA with a positive-stable drift
(Lemma~\ref{lem:hurwitz}).  The mechanism is an explicit \emph{symmetrizer}
$Q_t$ (Definition~\ref{def:Qsym}), which obeys the \emph{exact} one-step
identity $(I-\eta_t L_t)^\top Q_t (I-\eta_t L_t) = (1-\rho_t)\,Q_t$
(Lemma~\ref{lem:Qsym-decrease}) together with the $t$-uniform two-sided
bound $c_0\,\mathrm{diag}(I, \tau_t^{-1} I) \preceq Q_t \preceq
C_0\,\tau_t^{-1} I_{2d}$ (anisotropic in the lower bound, weighting the
buffer block by $\tau_t^{-1}$; Lemma~\ref{lem:Qsym-bounds}).  Setting
$V_t(z) = z^\top Q_t z$ and conditioning with the \emph{predictable} $Q_t$
($Q_{t+1}$ is only $\mathcal F_t$-measurable), the one-step identity gives
$\mathbb{E}[z_{t+1}^\top Q_t z_{t+1} \mid \mathcal F_{t-1}]
\le (1 - \rho_t)\, V_t(z_t) + \mathrm{tr}(Q_t B_t \overline S B_t^\top)
+ (\text{Taylor})$, the noise term being
$\mathrm{tr}(Q_t B_t \overline S B_t^\top) = O(\eta_t\rho_t)$ (since
$\rho_t/\tau_t = \eta_t$).  Accounting for the time variation
$Q_{t+1}-Q_t$ blockwise (Appendix~\ref{app:lyapunov}) gives, for any
$c_1 > 0$ under $\gamma\in(\alpha,1)$, the recursion $\mathbb E V_t =
O(t^{-\alpha})$; together with the $t$-uniform lower bound $Q_t \succeq
c_0 I_{2d}$, this yields $\mathbb E\|z_t\|^2 \le c_0^{-1}\,\mathbb E V_t
= O(t^{-\alpha})$.  The contraction $\rho_t = c_1/t^\gamma$ dominates
the $O(1/t)$ time-variation feedback because $\gamma<1$, so no lower
bound on $c_1$ beyond $c_1>0$ is required (in contrast to the boundary
case $\gamma = 1$, where a finite threshold $c_1 > c_1^\ast$ would be
needed).
\end{proof}

\section{Asymptotic Normality and the Projection Identity}
\label{sec:asymptotics}

This section establishes the two halves of the asymptotic result: a
non-autonomous Polyak--Ruppert CLT for the joint
chain~\eqref{eq:joint-recursion}, and the projection identity that
evaluates its iterate-marginal covariance as the canonical sandwich
$H^{-1}SH^{-1}$, independent of the \emph{frozen} momentum-decay rate
$\rho$, time-scale ratio $\tau$, and preconditioner $P$.  This
frozen-parameter invariance is the content of the projection identity;
the central limit theorem itself still requires the sub-linear momentum
schedule $\gamma\in(\alpha,1)$.

\subsection{The Non-autonomous Polyak--Ruppert CLT}
\label{sec:clt}

We first state the main result of the asymptotic analysis: a
Polyak--Ruppert CLT for the joint chain~\eqref{eq:joint-recursion}.

Let $\Sigma_w(t) := B_t S B_t^\top / \eta_t^2$ denote the
$\eta_t$-normalized one-step noise covariance---formed with the
\emph{limiting} noise level $S$ at $x^*$, not the conditional
$S(x_t)$---which after substituting
$B_t = (-\eta_t \rho_t P_t, \rho_t I)^\top$ equals
\[
  \Sigma_w(t) =
  \begin{pmatrix}
    \rho_t^2 P_t S P_t & -\rho_t \tau_t P_t S \\
    -\rho_t \tau_t S P_t & \tau_t^2 S
  \end{pmatrix}.
\]
Let $\Sigma_z(t)$ denote the leading-order (classical) Polyak--Ruppert
covariance associated with the frozen coefficients $(L_t, \Sigma_w(t))$;
in the decaying-step regime this is the relevant limit, the $O(\eta_t)$
fixed-step correction carried by the exact covariance of
\citet{mou2020linear} being lower order.  As for any linear
stochastic-approximation recursion with iterate averaging
\citep{polyak1992acceleration, mou2020linear}, this leading covariance is
\emph{not} the stationary (Ornstein--Uhlenbeck) covariance, which solves
the additive Lyapunov equation $L_t\,\Sigma + \Sigma\,L_t^\top =
\Sigma_w(t)$.  It is instead given by the \emph{congruence}
\begin{equation}
\label{eq:freeze-lyap}
  L_t\,\Sigma_z(t)\,L_t^\top = \Sigma_w(t),
  \qquad\text{equivalently}\qquad
  \Sigma_z(t) = L_t^{-1}\,\Sigma_w(t)\,L_t^{-\top}.
\end{equation}
The two notions are genuinely different matrices---already in the scalar
case, averaging gives variance $q/a^2$ whereas the stationary diffusion
gives $q/(2a)$ for drift $a$ and noise level $q$---and conflating them
changes the iterate-marginal block by an $O(1)$ matrix factor.  The
closed-form right-hand side of~\eqref{eq:freeze-lyap} is the object
Section~\ref{sec:projection} below evaluates explicitly.

\begin{theorem}[Augmented Polyak--Ruppert CLT]
\label{thm:augmented-clt}
For the joint chain~\eqref{eq:joint-recursion}---the un-bias-corrected
centered recursion of Section~\ref{sec:sa-adam-defn}, the first-moment
bias correction being restored as an effective step size in
Theorem~\ref{thm:main}---suppose Assumptions~\ref{ass:mg}--\ref{ass:quadratic}
and~\ref{ass:stab} hold, gradients are bounded ($\|g_t\| \le G$ a.s.), the
preconditioner is predictable ($P_t$ is $\mathcal F_{t-1}$-measurable, so
$L_t$ and $B_t$ are too) and obeys the standing
conditions~\eqref{eq:appA-standing} (in particular
$\|P_{t+1} - P_t\|_{\mathrm{op}} = O(t^{-1})$), and the momentum schedule
is $\rho_t = c_1/t^\gamma$ with $\gamma\in(\alpha,1)$,
$\alpha\in(1/2,1)$ (the lower bound $\gamma>\alpha$ gives the
two-time-scale separation $\tau_t = \rho_t/\eta_t \to 0$; the upper bound
$\gamma<1$ controls the Lyapunov time-variation in
Proposition~\ref{prop:augmented-mse} and the endpoint-buffer remainder in
Appendix~\ref{app:clt-proof}).  Suppose moreover:
\begin{enumerate}
\item[\textnormal{(a)}] (\emph{Conditional-covariance continuity at
  $x^*$})  $\mathbb E[\xi_t\xi_t^\top \mid \mathcal F_{t-1}] = S(x_t)$
  for a deterministic map $S(\cdot)$ with $S(x) \to S$ as $x \to x^*$
  (as in the conditional-covariance continuity assumed by \citet{anhuo2026});
\item[\textnormal{(b)}] (\emph{Augmented mean-square bounds})
\begin{equation}
\label{eq:clt-stability}
  \mathbb E\|\Delta_t\|^2 = O(t^{-\alpha}),
  \qquad
  \mathbb E\|m_{t-1}\|^2 = O(t^{-\gamma}),
\end{equation}
  which hold under the hypotheses of Proposition~\ref{prop:augmented-mse}
  (in particular Assumption~\ref{ass:fourth}).
\end{enumerate}
Then
\begin{equation}
\label{eq:joint-clt}
  \sqrt n\,\overline z_n \xrightarrow{d}
  \mathcal{N}\!\left(0,\ \begin{pmatrix} H^{-1}SH^{-1} & 0 \\ 0 & 0
  \end{pmatrix}\right).
\end{equation}
The iterate block is the projection identity of
Theorem~\ref{thm:projection}; the buffer row and column vanish because
$A_t B_t = (-H^{-1}, 0)^\top$ eliminates the leading buffer term and the
remainder bound gives $\sqrt n\,\overline m_{n-1} \to 0$ (both in
Appendix~\ref{app:clt-proof}).  In particular the iterate marginal
satisfies $\sqrt n\,\overline\Delta_n \xrightarrow{d}
\mathcal N(0, H^{-1}SH^{-1})$.  The complete proof is given in
Appendix~\ref{app:clt-proof}.
\end{theorem}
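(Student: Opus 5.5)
We follow the Polyak--Juditsky route for linear stochastic approximation, adapted to a time-varying drift. Write the joint recursion~\eqref{eq:joint-recursion} as $z_{t+1} = z_t - \eta_t L_t z_t + B_t(u_t+\xi_t)$ and solve for the drift term:
\[
  L_t z_t = \eta_t^{-1}(z_t - z_{t+1}) + \eta_t^{-1}B_t(u_t+\xi_t).
\]
Premultiplying by $A_t := L_t^{-1}$ gives
\[
  z_t = A_t\eta_t^{-1}(z_t - z_{t+1}) + A_t\eta_t^{-1}B_t\xi_t + A_t\eta_t^{-1}B_t u_t .
\]
The key algebraic input is a direct block computation from~\eqref{eq:Lt}. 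It shows $A_t\eta_t^{-1}B_t = (-H^{-1},0)^\top$ exactly, for every $t$, independently of $P_t$, $\rho_t$ and $\tau_t$. To check it, verify $L_t(-H^{-1},0)^\top = (-\rho_tP_t, \tau_t I)^\top = \eta_t^{-1}B_t$.

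Averaging over $t\le n$ then yields
\[
  \sqrt n\,\overline z_n = -\frac{1}{\sqrt n}\sum_t \binom{H^{-1}\xi_t}{0} - \frac{1}{\sqrt n}\sum_t\binom{H^{-1}u_t}{0} + \frac{1}{\sqrt n}\sum_t \eta_t^{-1}A_t(z_t-z_{t+1}).
\]
The leading term is handled by the martingale CLT. Its conditional covariance is $H^{-1}S(x_t)H^{-1}$, which converges to $H^{-1}SH^{-1}$ in $L^1$ on average by (a), the mean-square bound (b), and boundedness of $\xi_t$, so the bounded gradients give the Lindeberg condition for free. This produces exactly the block-diagonal limit~\eqref{eq:joint-clt}, with the zero buffer blocks coming from the zero second component; the projection identity is then automatic. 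The Taylor term is $O(n^{1/2-\alpha})$ in $L^1$, as in the proof of Proposition~\ref{prop:tautology}.

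The main obstacle is the remainder $\frac{1}{\sqrt n}\sum_t \eta_t^{-1}A_t(z_t - z_{t+1})$. We Abel-sum it into three pieces:
\[
  \frac{1}{\sqrt n}\Big[\eta_1^{-1}A_1z_1 - \eta_n^{-1}A_nz_{n+1} + \sum_{t=2}^{n}\big(\eta_t^{-1}A_t - \eta_{t-1}^{-1}A_{t-1}\big)z_t\Big].
\]
This needs explicit block formulas for $A_t = L_t^{-1}$. A Schur computation gives an iterate row of order $H^{-1}$ and a buffer column containing terms like $M_t$ and $\tau_t^{-1}$-weighted pieces. So $\|A_t\|$ is not uniformly bounded: it grows like $\tau_t^{-1}$ in the buffer directions.

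We therefore treat the iterate and buffer coordinates of $z_t$ separately and use the anisotropic rates of (b). For the endpoint term we use $\mathbb E\|\Delta_n\|^2 = O(n^{-\alpha})$ and $\mathbb E\|m_n\|^2 = O(n^{-\gamma})$. The endpoint size is then roughly
\[
  n^{-1/2}\big(\eta_n^{-1}n^{-\alpha/2} + \eta_n^{-1}\tau_n^{-1}n^{-\gamma/2}\big) = n^{\alpha/2-1/2} + n^{\gamma/2-1/2}.
\]
Both pieces vanish precisely because $\gamma<1$, which is where the boundary case $\gamma=1$ would fail.

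For the increment sum, we split $\eta_t^{-1}A_t - \eta_{t-1}^{-1}A_{t-1}$ into a deterministic part and a matrix-increment part. The deterministic part comes from $\eta_t^{-1}-\eta_{t-1}^{-1} = O(t^{\alpha-1})$ and the variation of $\rho_t,\tau_t$. The matrix-increment part is $\eta_t^{-1}(A_t-A_{t-1})$, controlled by Assumption~\ref{ass:stab} ($\|M_t-M_{t-1}\|\le C_Mt^{-\beta}$) together with $\|P_{t+1}-P_t\| = O(t^{-1})$. We then apply Cauchy--Schwarz against the blockwise rates in (b) and sum. The exponent bookkeeping in this step is where $\beta>(\alpha+1)/2$ and $\gamma\in(\alpha,1)$ enter, and it must be done carefully on the $\tau_t^{-1}$-weighted buffer entries.

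If a buffer-column entry turns out too large for crude bounds, the first fallback is to substitute $m_{t-1}$ through its own recursion, $m_{t-1} = \rho_{t-1}^{-1}(m_{t-1}-m_{t-2}) + \cdots$. We then telescope once more, so that the offending term becomes a further boundary term plus a martingale with variance $o(n)$.

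Finally, the statement that $\sqrt n\,\overline m_{n-1}\to 0$ follows from the same decomposition: its second component has a zero martingale part and a vanishing remainder.
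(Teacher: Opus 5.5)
Your proposal is correct. Its skeleton is the paper's: Abel summation after multiplying by $L_t^{-1}/\eta_t$, the exact identity $L_t^{-1}B_t/\eta_t=(-H^{-1},0)^\top$, a Hall--Heyde CLT with trivial Lindeberg condition, and the $L^1$ Taylor bound.

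You diverge only in the remainder. The paper never estimates the boundary and increment terms entry by entry; it identifies them exactly. Subtracting the first block of the decomposition from the averaged gradient identity $\overline g_n=H\overline\Delta_n+\overline u_n+\overline\xi_n$ gives $R^z_{n,1}=H^{-1}\overline g_n$, and the zero buffer row gives $R^z_{n,2}=\overline m_{n-1}$. The paper then bounds these two averages by summation by parts on $m_t=\eta_t^{-1}P_t^{-1}(\Delta_t-\Delta_{t+1})$ and on $g_t=\rho_t^{-1}(m_t-m_{t-1})+m_{t-1}$.

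Your direct bookkeeping also closes, and your fallback is never needed. The matrix $\eta_t^{-1}L_t^{-1}$ has blocks $\eta_t^{-1}M_t$ and $-(\rho_t^{-1}-1)H^{-1}$ in the first row, and $H\eta_t^{-1}M_t$ and $I$ in the second. So the buffer column is deterministic, with increments $O(t^{\gamma-1})$; against $\|m_{t-1}\|_2=O(t^{-\gamma/2})$ it contributes $O(n^{(\gamma-1)/2})$. The iterate column has increments $O(t^{\alpha-1})+O(t^{\alpha-\beta})$. Against $\|\Delta_t\|_2=O(t^{-\alpha/2})$ it contributes $O(n^{(\alpha-1)/2})$ plus $O(n^{-1/2}\sum_{t\le n}t^{\alpha/2-\beta})$, and the latter is $o(1)$ as soon as $\beta>(\alpha+1)/2$.

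The two routes run the same estimates in different packaging. The paper's identification is more conceptual: it shows the remainder is literally the averaged gradient of Proposition~\ref{prop:tautology} together with the averaged buffer. Yours is more mechanical, but it makes transparent that the increment sum needs only the threshold $\beta>(\alpha+1)/2$ of Assumption~\ref{ass:stab}. This is because the whole first block column is $(I,H)^\top\eta_t^{-1}M_t$, so the stronger $O(t^{-1})$ preconditioner increment the paper invokes there is not needed. That stronger rate is still used upstream, to obtain hypothesis~(b).

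One small correction: the $n^{(\alpha-1)/2}$ endpoint piece vanishes because $\alpha<1$. Only the $n^{(\gamma-1)/2}$ piece uses $\gamma<1$, and $\gamma>\alpha$ plays no role in the remainder itself.
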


\begin{proof}[Proof sketch; full proof in Appendix~\ref{app:clt-proof}]
The argument adapts the linear-SA Polyak--Ruppert framework of
\citet{mou2020linear} to the time-varying drift $L_t$.  Left-multiplying
the recursion by $A_t := L_t^{-1}/\eta_t$ and Abel-summing yields the
decomposition $\overline z_n = \overline{A_t B_t\,\xi_t} +
\overline{A_t B_t\,u_t} + R_n^z$, whose leading coefficient is the
\emph{exact} constant $A_t B_t = (-H^{-1},0)^\top$ (Eq.~\eqref{eq:AtBt});
the martingale term then obeys a Hall--Heyde CLT with covariance
$\mathrm{diag}(H^{-1}SH^{-1},0)$, the Taylor term vanishes in $L^1$ under
the iterate bound of~(b), and the boundary/increment remainder
$R_n^z = n^{-1}[A_1 z_1 - A_n z_{n+1} + \sum_{t=2}^n (A_t - A_{t-1})z_t]$
is $o(n^{-1/2})$ in $L^2$ (Lemma~\ref{lem:Rnz-bound}).  The one genuinely
new step is this time-varying Abel summation, whose error consumes the
$O(t^{-1})$ preconditioner increment of~\eqref{eq:appA-standing}---a rate
strictly stronger than the bare stabilization threshold of
Assumption~\ref{ass:stab}.  Slutsky's lemma then assembles the joint
CLT~\eqref{eq:joint-clt}.
\end{proof}

\emph{No convergence hypothesis on $P_t$.}
The limit in~\eqref{eq:joint-clt} exists without assuming $P_t \to P$.
The frozen-chain covariance $\Sigma_z(t) = L_t^{-1}\Sigma_w(t)
L_t^{-\top}$ equals $\mathrm{diag}(H^{-1}SH^{-1}, 0)$ for \emph{every}
$t$---the $(1,1)$ block by Theorem~\ref{thm:projection}, the buffer row
and column by the exact identity~\eqref{eq:AtBt}---so the limiting
covariance \emph{value} needs only that the projection identity hold
per-$t$, which it does for any $P_t \succ 0$, $\rho_t \in (0,1]$, and
$\tau_t > 0$.  The remainder bound additionally uses the boundedness and
$O(t^{-1})$ one-step increment of $P_t$ from~\eqref{eq:appA-standing}
(supplied by SA-Adam, Proposition~\ref{prop:sa-adam-stab}); what neither
needs is a limit $P_t \to P$.

\subsection{The Iterate-Marginal Projection Identity}
\label{sec:projection}

We first record the block inverse of the frozen drift, the computational
ingredient of the projection identity.  Let $L = L(P, \rho, \tau)$ denote
the frozen drift matrix~\eqref{eq:Lt} with fixed $P, \rho, \tau$, and let
$M := (PH)^{-1}$.

\begin{lemma}[Inverse of $L$]
\label{lem:Linv}
Suppose $P \succ 0$, $H \succ 0$, and $\tau > 0$ (so that
$M := (PH)^{-1}$ exists); $\rho \in \mathbb R$ is arbitrary.  Then $L$ is
invertible, with
\begin{equation}
\label{eq:Linv}
  L^{-1} = \begin{pmatrix}
    M & -(1-\rho)\tau^{-1} H^{-1} \\
    P^{-1} & \rho\tau^{-1} I
  \end{pmatrix}.
\end{equation}
\end{lemma}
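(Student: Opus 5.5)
The plan is to verify the claimed formula directly, by multiplying the proposed inverse by $L$ block by block and checking that the product is $I_{2d}$. Since $L$ is a square $2d\times 2d$ matrix, a one-sided inverse is automatically two-sided, so invertibility follows from the computation. Write
\[
L=\begin{pmatrix}\rho PH & (1-\rho)P\\ -\tau H & \tau I\end{pmatrix},\qquad
K:=\begin{pmatrix} M & -(1-\rho)\tau^{-1}H^{-1}\\ P^{-1} & \rho\tau^{-1}I\end{pmatrix},
\]
with $M=(PH)^{-1}=H^{-1}P^{-1}$. This $M$ exists because $P\succ0$ and $H\succ0$ are each invertible, and $\tau^{-1}$ exists because $\tau>0$. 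No condition on $\rho$ is needed, consistent with the statement.

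I would compute $LK$ in four blocks. The $(1,1)$ block is $\rho PH\,M+(1-\rho)P\,P^{-1}=\rho I+(1-\rho)I=I$. The $(1,2)$ block is $-\rho(1-\rho)\tau^{-1}PHH^{-1}+(1-\rho)\rho\tau^{-1}P=0$. The $(2,1)$ block is $-\tau H\,H^{-1}P^{-1}+\tau P^{-1}=0$. The $(2,2)$ block is $(1-\rho)HH^{-1}+\rho I=I$. Hence $LK=I_{2d}$, so $L$ is invertible and $L^{-1}=K$.

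As a sanity check, I would also compare with the determinant identity~\eqref{eq:char-det-identity} at $\lambda=0$, which gives $\chi(0)=\det(\tau PH)\neq0$. This confirms invertibility independently and matches the constant term $\tau\mu_i$ of each quadratic in~\eqref{eq:char-poly}.

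There is no real obstacle. The only care needed is the non-commutativity of $P$ and $H$. Specifically, $PH\,M=I$ uses the order $M=H^{-1}P^{-1}$, and the $(1,2)$ and $(2,1)$ cancellations rely on $H$ and $H^{-1}$ (respectively $P$ and $P^{-1}$) appearing adjacent to each other, so no commutation is ever required. One could also derive $K$ constructively through the Schur complement of the invertible $(2,2)$ block $\tau I$. That complement is $\rho PH+(1-\rho)PH=PH$, so the $(1,1)$ block of the inverse is $M$, and the remaining blocks follow from the standard formula. Direct verification is shorter, though, and is what I would write.
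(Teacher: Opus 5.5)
Your proof is correct and is essentially the paper's argument. The paper takes the Schur complement of the invertible block $\tau I$, which gives $\rho PH + (1-\rho)PH = PH$, and then confirms the inverse by direct multiplication $LL^{-1} = I_{2d}$; you use the same two routes with the emphasis reversed, and your four block computations and the one-sided-inverse remark are sound.
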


\begin{proof}
Write $L$ in $d\times d$ blocks as $L = \bigl(\begin{smallmatrix} A & B
\\ C & D \end{smallmatrix}\bigr)$ with $A = \rho PH$, $B = (1-\rho)P$,
$C = -\tau H$, $D = \tau I$.  The lower-right block $D = \tau I$ is invertible
($\tau > 0$), and its Schur complement
$A - BD^{-1}C = \rho PH + (1-\rho)PH = PH$ is invertible ($P, H \succ 0$) with
$(A-BD^{-1}C)^{-1} = (PH)^{-1} = M$; so $L$ is invertible and~\eqref{eq:Linv}
is its standard Schur-complement block inverse \citep[\S0.7.3]{hornjohnson2013}.
Direct multiplication confirms $L L^{-1} = I_{2d}$, using $M = H^{-1}P^{-1}$,
$HM = P^{-1}$, and $(PH)M = I$ (e.g.\ the off-diagonal blocks vanish:
$(LL^{-1})_{21} = -\tau HM + \tau P^{-1} = 0$ and
$(LL^{-1})_{12} = -\rho(1-\rho)\tau^{-1}P + \rho(1-\rho)\tau^{-1}P = 0$).
\end{proof}

With the block inverse in hand, we evaluate the iterate-marginal
covariance and find it independent of the preconditioner and the
momentum scales.

\begin{theorem}[Iterate-marginal projection identity]
\label{thm:projection}
Let $H = H^\top \succ 0$, $P = P^\top \succ 0$, $S = S^\top \succeq 0$,
and $\rho \in (0,1]$, $\tau > 0$, and let $L = L(P, \rho, \tau)$ be the frozen
drift~\eqref{eq:Lt}.  Let $\Sigma_w$ denote the frozen normalized
one-step noise covariance
\[
  \Sigma_w = \begin{pmatrix}
    \rho^2 P S P & -\rho\tau P S \\
    -\rho\tau S P & \tau^2 S
  \end{pmatrix}.
\]
Then the Polyak--Ruppert covariance $\Sigma_z = L^{-1}\Sigma_w
L^{-\top}$ satisfies
\begin{equation}
\label{eq:projection}
  \Sigma_z^{(1,1)} = H^{-1} S H^{-1},
\end{equation}
\emph{independently} of $\rho$, $\tau$, and $P$.
\end{theorem}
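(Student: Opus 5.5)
Since $\Sigma_w$ has rank at most $d$, I will factor it and never form the full $2d\times 2d$ product. Directly from the displayed blocks,
\[
  \Sigma_w = w\,S\,w^\top,
  \qquad
  w := \begin{pmatrix} -\rho P \\ \tau I\end{pmatrix} \in \mathbb R^{2d\times d},
\]
which is the frozen version of $B_t/\eta_t$. Indeed $w S w^\top$ has blocks $\rho^2 PSP$, $-\rho\tau PS$, $-\rho\tau SP$, $\tau^2 S$. It follows that
\[
  \Sigma_z = L^{-1}\Sigma_w L^{-\top} = (L^{-1}w)\,S\,(L^{-1}w)^\top,
\]
so everything reduces to the single $2d\times d$ matrix $L^{-1}w$.

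Next I compute $L^{-1}w$ using the block inverse of Lemma~\ref{lem:Linv}.

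For the first block row:
\[
  M(-\rho P) + \bigl(-(1-\rho)\tau^{-1}H^{-1}\bigr)(\tau I) = -\rho H^{-1}P^{-1}P - (1-\rho)H^{-1} = -H^{-1},
\]
using $M = H^{-1}P^{-1}$.

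For the second block row:
\[
  P^{-1}(-\rho P) + \rho\tau^{-1}\cdot\tau I = -\rho I + \rho I = 0.
\]

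Hence $L^{-1}w = (-H^{-1},\,0)^\top$. This is the frozen form of the identity $A_tB_t = (-H^{-1},0)^\top$ quoted in Theorem~\ref{thm:augmented-clt}. It is exact, and $\rho$, $\tau$ and $P$ all cancel: the $\tau$ cancels in each row separately, and $P$ cancels through $M = H^{-1}P^{-1}$.

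Substituting back gives
\[
  \Sigma_z = \begin{pmatrix} H^{-1}SH^{-1} & 0\\ 0 & 0\end{pmatrix},
\]
using $H^{-\top} = H^{-1}$ by symmetry of $H$. Its $(1,1)$ block is~\eqref{eq:projection}, and the vanishing buffer row and column come for free.

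The hypotheses are used as follows. $P\succ0$, $H\succ0$ and $\tau>0$ are needed only for Lemma~\ref{lem:Linv}. The restriction $\rho\in(0,1]$ is not needed for the algebra; $S\succeq0$ is never used beyond its symmetry.

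There is no real obstacle here. The only step that needs care is recognizing the rank-$d$ factorization $\Sigma_w = wSw^\top$. Without it, expanding $L^{-1}\Sigma_w L^{-\top}$ blockwise produces many cross terms in $\rho$, $\tau$ and $P$ whose cancellation is opaque. With it, the proof is a two-line multiplication followed by checking the signs in the factor $w$.
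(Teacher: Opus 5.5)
Your proof is correct. It uses the same ingredient as the paper, the block inverse of Lemma~\ref{lem:Linv}, but organizes the computation differently.

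The paper expands $L^{-1}\Sigma_w L^{-\top}$ blockwise. It first forms the top row of $L^{-1}\Sigma_w$, namely $\rho H^{-1}SP$ and $-\tau H^{-1}S$, and then multiplies by the first block column of $L^{-\top}$. The cancellations $PP^{-1}=I$, $\tau\tau^{-1}=1$ and $\rho+(1-\rho)=1$ appear only inside the $(1,1)$ block.

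Your rank-$d$ factorization $\Sigma_w = wSw^\top$ moves all of the cancellation into the $S$-free matrix $L^{-1}w$. This gives you the whole of $\Sigma_z=\mathrm{diag}(H^{-1}SH^{-1},0)$ at once, buffer row and column included. It also shows that the invariance is a property of the leading coefficient, not of $S$.

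Since $w$ is exactly the frozen $B_t/\eta_t$, your $L^{-1}w$ is the identity~\eqref{eq:AtBt}. The paper itself uses your factorization later, in the completion of the proof of Theorem~\ref{thm:augmented-clt}, where it writes $L_t^{-1}\Sigma_w(t)L_t^{-\top}=(A_tB_t)S(A_tB_t)^\top$.

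So your route proves the projection identity and the vanishing buffer block of~\eqref{eq:joint-clt} in one step. The paper's direct expansion needs no structural insight, but it yields only the $(1,1)$ block and must obtain the rest separately.

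Your side remarks are also right: Lemma~\ref{lem:Linv} allows arbitrary real $\rho$, and positivity of $S$ plays no role.
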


\begin{proof}
Direct computation from the block inverse $L^{-1}$ of
Lemma~\ref{lem:Linv}, whose top row has blocks $(L^{-1})_{11} = M =
(PH)^{-1} = H^{-1}P^{-1}$ and $(L^{-1})_{12} = -(1-\rho)\tau^{-1}H^{-1}$.
Compute the top row of $L^{-1}\Sigma_w$:
\begin{align*}
  (L^{-1}\Sigma_w)_{11}
  &= M \cdot \rho^2 PSP + \bigl(-(1-\rho)\tau^{-1}H^{-1}\bigr)\cdot
     (-\rho\tau SP) \\
  &= \rho^2\, H^{-1}P^{-1}\cdot PSP + (1-\rho)\rho\, H^{-1}SP \\
  &= \rho^2 H^{-1}SP + (1-\rho)\rho H^{-1}SP \\
  &= \rho\, H^{-1}SP.
\end{align*}
\begin{align*}
  (L^{-1}\Sigma_w)_{12}
  &= M \cdot (-\rho\tau PS) + \bigl(-(1-\rho)\tau^{-1}H^{-1}\bigr)\cdot \tau^2 S \\
  &= -\rho\tau\, H^{-1}P^{-1}\cdot PS - (1-\rho)\tau\, H^{-1}S \\
  &= -\rho\tau\, H^{-1}S - (1-\rho)\tau\, H^{-1}S \\
  &= -\tau\, H^{-1}S.
\end{align*}

Now compute the $(1,1)$ block of $L^{-1}\Sigma_w L^{-\top}$.  Its first
block-column draws on the blocks $(L^{-\top})_{11} = (L^{-1})_{11}^\top =
M^\top = P^{-1}H^{-1}$ and $(L^{-\top})_{21} = (L^{-1})_{12}^\top =
-(1-\rho)\tau^{-1}H^{-1}$ (using $H = H^\top$, $P = P^\top$):
\begin{align*}
  \Sigma_z^{(1,1)}
  &= (L^{-1}\Sigma_w)_{11}\,(L^{-\top})_{11}
   + (L^{-1}\Sigma_w)_{12}\,(L^{-\top})_{21} \\
  &= (\rho H^{-1}SP)(P^{-1}H^{-1})
   + (-\tau H^{-1}S)\bigl(-(1-\rho)\tau^{-1}H^{-1}\bigr) \\
  &= \rho\, H^{-1}S\, P P^{-1}\, H^{-1}
   + (1-\rho)\, H^{-1}S H^{-1} \\
  &= \rho\, H^{-1}SH^{-1} + (1-\rho)\, H^{-1}SH^{-1} \\
  &= H^{-1} S H^{-1}.
\end{align*}
Three exact cancellations produce the $P$-, $\rho$-, and
$\tau$-independence: the preconditioner cancels through $PP^{-1} = I$ in
the first term, the timescale prefactors cancel through
$\tau\cdot\tau^{-1} = 1$ in the second, and the weights $\rho$ and
$1-\rho$ sum to $1$.  No assumption on $P, \rho, \tau$ beyond
positivity---$P \succ 0$ and $\rho \in (0,1]$, $\tau > 0$, which ensure
$L$ is invertible by Lemma~\ref{lem:Linv}---is used; in particular no
commutativity of $P$ and $H$ is required.
\end{proof}

\begin{remark}[Unification of two prior preservation results]
\label{rem:unification}
Theorem~\ref{thm:projection} unifies two prior preservation results:
\citet{liu2023acceleration} showed averaged SGD with momentum (SGDM) is asymptotically
equivalent to averaged SGD with sandwich covariance $H^{-1}SH^{-1}$
(momentum, no preconditioning; \citet{wei2025weighted} reach the same
form up to a weight-dependent scalar), and \citet{anhuo2026} showed the
iterate-marginal covariance equals $H^{-1}SH^{-1}$ independently of a
non-convergent $P_t$ (preconditioning, no momentum).  The present theorem
establishes the identity for both effects simultaneously, hence for
time-varying momentum with a non-convergent preconditioner; the
$P$-independence is the algebraic reason preconditioning changes
finite-sample behavior but not the first-order asymptotic covariance.
\end{remark}

\section{The SA-Adam Main Theorem and Variants}
\label{sec:sa-adam}

The preceding theory is stated for an abstract preconditioner satisfying
the stabilization and standing conditions.  We now instantiate it for the
concrete SA-Adam algorithm of Section~\ref{sec:sa-adam-defn}:
Section~\ref{sec:sa-adam-verify} verifies that SA-Adam's data-driven
preconditioner meets those conditions,
Section~\ref{sec:bias-correction} reduces the first-moment bias correction
to an effective step size, Section~\ref{sec:main-theorem} assembles the
resulting Polyak--Ruppert CLT (Theorem~\ref{thm:main}), and
Section~\ref{sec:variants} treats the SA-AMSGrad, coupled ($L_2$)
weight-decay, and full-matrix SA-Adam-full variants.

\subsection{Preconditioner Verification}
\label{sec:sa-adam-verify}

To apply the general theory to SA-Adam we must check that its data-driven
preconditioner $P_t = \mathrm{Diag}(\hat v_{t-1})^{-1/2}$ satisfies the
conditions assumed abstractly above.  The following proposition discharges
both the rate-only stabilization condition (Assumption~\ref{ass:stab},
with $\beta = 1$) and the standing conditions~\eqref{eq:prop-precond}; the
second-moment argument parallels the SA-RMSProp verification of
\citet{anhuo2026}, the $\hat v$ bias-correction step being SA-Adam-specific.

\begin{proposition}[Preconditioner verification for SA-Adam]
\label{prop:sa-adam-stab}
Consider the SA-Adam recursion~\eqref{eq:sa-adam} with $c_2 \in (0, 1]$,
bounded stochastic gradients $\|g_t\| \le G$ a.s., initialization
$v_0 = \epsilon\mathbf 1$ ($\epsilon > 0$), and the convention
$\hat v_0 := v_0$.  (The momentum parameters $c_1, \gamma$ and the
initialization $m_0$ do not enter this verification beyond the
bounded-gradient assumption: momentum shapes the trajectory $x_t$ and
hence $g_t$, but the preconditioner $P_t = \mathrm{Diag}(\hat
v_{t-1})^{-1/2}$ uses $g_t$ only through the second-moment buffer $v_t$,
which the analysis controls via $\|g_t\| \le G$ alone.)  Then, writing
$M_t := (P_t H)^{-1}$:
\begin{enumerate}
\item[\textnormal{(i)}] (\emph{Stabilization}, Assumption~\ref{ass:stab}
  with $\beta = 1$) $\|M_t - M_{t-1}\|_{\mathrm{op}} \le C\, t^{-1}$
  a.s.\ for all $t \ge 2$, and $\sup_t \|M_t\|_{\mathrm{op}} < \infty$;
\item[\textnormal{(ii)}] (\emph{Standing conditions}
  \eqref{eq:prop-precond}, equivalently~\eqref{eq:appA-standing}) there
  exist constants $0 < p_- \le p_+$, $0 < \mu_- \le \mu_+$,
  $C_P < \infty$ such that, for all $t \ge 1$,
  $p_- I \preceq P_t \preceq p_+ I$,
  $\mu_- I \preceq P_t^{1/2} H P_t^{1/2} \preceq \mu_+ I$, and
  $\|P_{t+1} - P_t\|_{\mathrm{op}} \le C_P\, t^{-1}$.
\end{enumerate}
\end{proposition}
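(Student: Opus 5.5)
The argument works coordinatewise on the diagonal second-moment buffer. Every preconditioner property then reduces to two-sided bounds on $\hat v_{t-1}$ and a bound on its one-step increment, both following from $\|g_t\|\le G$ and $\epsilon>0$.

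\emph{Step 1 (two-sided bounds on $v_t$ and $\hat v_t$).} Each coordinate satisfies $v_t=(1-\rho^v_t)v_{t-1}+\rho^v_t(g_t^2+\epsilon)$, with $g_{t,j}^2+\epsilon\in[\epsilon,G^2+\epsilon]$ and $v_0=\epsilon$. By induction, $v_{t,j}$ is a convex combination of numbers in $[\epsilon,G^2+\epsilon]$. The weights are $\prod_{s\le t}(1-\rho^v_s)$ on $v_0$ and $\rho^v_s\prod_{r=s+1}^t(1-\rho^v_r)$ on the $s$-th input, and they sum to one. Hence $v_{t,j}\in[\epsilon,G^2+\epsilon]$.

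For the bias-corrected version I will use the same unrolling. The weights on the data terms sum to $\kappa^v_t$, and the $v_0$ term carries weight $1-\kappa^v_t$. Therefore
\[
\hat v_t=v_t/\kappa^v_t=\frac{1-\kappa^v_t}{\kappa^v_t}\,\epsilon+(\text{convex average of the data terms}).
\]
The second term lies in $[\epsilon,G^2+\epsilon]$, so $\hat v_{t,j}\ge\epsilon$. For the upper bound I need $\kappa^v_t$ bounded below. Since $\kappa^v_t\ge\kappa^v_1=c_2>0$ and $\kappa^v_t$ is nondecreasing, $\hat v_{t,j}\le (G^2+\epsilon)/c_2$. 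With the convention $\hat v_0=\epsilon\mathbf 1$, this gives $p_-:=\sqrt{c_2/(G^2+\epsilon)}$ and $p_+:=\epsilon^{-1/2}$ with $p_-I\preceq P_t\preceq p_+I$. The bounds on $P_t^{1/2}HP_t^{1/2}$ then follow from $H\succeq\mu I$ and $\|H\|<\infty$, taking $\mu_-=p_-\mu$ and $\mu_+=p_+\|H\|_{\mathrm{op}}$. Because $M_t=H^{-1}P_t^{-1}$, we get $\|M_t\|_{\mathrm{op}}\le \mu^{-1}p_-^{-1}$ uniformly.

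\emph{Step 2 (increment of $\hat v$).} First, $v_t-v_{t-1}=\rho^v_t(g_t^2+\epsilon-v_{t-1})$, so $|v_t-v_{t-1}|\le (c_2/t)(G^2+\epsilon)$. Next I control $\kappa^v$. Since $\prod_{s\le t}(1-c_2/s)=O(t^{-c_2})$, we have $1/\kappa^v_t-1/\kappa^v_{t-1}=O\bigl(\rho^v_t\prod_{s<t}(1-\rho^v_s)\bigr)=O(t^{-1-c_2})$. In the case $c_2=1$, $\kappa^v\equiv1$ and this step is trivial. Writing $\hat v_t-\hat v_{t-1}=(v_t-v_{t-1})/\kappa^v_t+v_{t-1}(1/\kappa^v_t-1/\kappa^v_{t-1})$, both pieces are $O(1/t)$ uniformly in the coordinate. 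The step $t=1$ from the convention $\hat v_0$ is a bounded constant and is absorbed into $C$.

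\emph{Step 3 (transfer to $P$ and $M$).} The map $y\mapsto y^{-1/2}$ is Lipschitz on $[\epsilon,\infty)$ with constant $\tfrac12\epsilon^{-3/2}$. Hence $\|P_{t+1}-P_t\|_{\mathrm{op}}=\max_j|\hat v_{t,j}^{-1/2}-\hat v_{t-1,j}^{-1/2}|\le C_P t^{-1}$. Similarly, $y\mapsto y^{1/2}$ is Lipschitz on $[\epsilon,\infty)$, so $\|P_t^{-1}-P_{t-1}^{-1}\|_{\mathrm{op}}=O(t^{-1})$ after the index shift, since $P_t$ uses $\hat v_{t-1}$. Then $\|M_t-M_{t-1}\|_{\mathrm{op}}\le\|H^{-1}\|_{\mathrm{op}}\,\|P_t^{-1}-P_{t-1}^{-1}\|_{\mathrm{op}}\le Ct^{-1}$. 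Because $1>(\alpha+1)/2$, this is Assumption~\ref{ass:stab} with $\beta=1$.

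The only step needing care is Step 2's bias-correction term, specifically the uniform positive lower bound on $\kappa^v_t$ and the $O(t^{-1})$ size of its reciprocal's increment. Both rest on $c_2\in(0,1]$ making $\kappa^v_1=c_2>0$ and the product $\prod(1-c_2/s)$ monotone decreasing. I would verify these first.
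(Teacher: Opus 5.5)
Your proposal is correct and follows the paper's proof essentially step for step: convex-combination bounds on $v_t$, the lower bound $\kappa^v_t\ge \kappa^v_1=c_2$, the same two-term split of $\hat v_t-\hat v_{t-1}$ with an $O(t^{-1-c_2})$ bias-correction piece and the $t=1$ step absorbed into the constant, and Lipschitz transfer through $y\mapsto y^{\mp1/2}$ to $P_t$ and to $M_t=H^{-1}P_t^{-1}$. The unrolled representation you use to get $\hat v_t\ge\epsilon$ is harmless but unnecessary, since $\kappa^v_t\le 1$ already gives $\hat v_t\ge v_t\ge\epsilon$ directly.
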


\begin{proof}
\emph{The $v_t$-preconditioner (parallel to SA-RMSProp).}  SA-Adam's
preconditioner is built from the second-moment buffer exactly as
SA-RMSProp's, so the boundedness $v_t \in [\epsilon, G^2+\epsilon]^d$, the
increment $\|v_t - v_{t-1}\|_\infty \le c_2 G^2/t = O(t^{-1})$, and the
resulting stabilization $\|M_t - M_{t-1}\|_{L^2(\mathrm{op})} = O(t^{-1})$
upgrading to the pathwise rate $\beta = 1$ under bounded gradients are the
SA-RMSProp case of \citet{anhuo2026}, whose argument we recall here before
verifying the SA-Adam-specific bias correction in full.  The inputs are elementary: bounded gradients give $g_t \odot g_t +
\epsilon\mathbf 1 \in [\epsilon, G^2+\epsilon]$ coordinatewise, so
$v_t$---a convex combination started at $v_0 = \epsilon\mathbf 1$---stays
in $[\epsilon, G^2+\epsilon]$ with $\|v_t - v_{t-1}\|_\infty =
\rho^v_t\|g_t \odot g_t + \epsilon\mathbf 1 - v_{t-1}\|_\infty \le c_2
G^2/t$.

\emph{Bias correction (the one SA-Adam-specific step).}  SA-Adam divides
by $\kappa^v_t = 1 - \prod_{s=1}^t(1 - c_2/s)$, which RMSProp does not.
As $\kappa^v_t$ increases from $\kappa^v_1 = c_2$ to $1$ it is bounded
away from $0$, with increment $\kappa^v_t - \kappa^v_{t-1} =
(c_2/t)\prod_{s=1}^{t-1}(1 - c_2/s) = O(t^{-c_2-1})$ (the product is
$O(t^{-c_2})$).  For $t \ge 2$ (so that $\kappa^v_{t-1}$ is defined), the
exact bias-corrected increment
\[
  \hat v_t - \hat v_{t-1}
  = \frac{v_t - v_{t-1}}{\kappa^v_t}
    + v_{t-1}\Bigl(\frac{1}{\kappa^v_t} - \frac{1}{\kappa^v_{t-1}}\Bigr)
\]
has first term $O(t^{-1})$ (numerator $O(t^{-1})$, denominator
$\ge c_2$) and second term $O(t^{-c_2-1})$ (since $\tfrac{1}{\kappa^v_t}
- \tfrac{1}{\kappa^v_{t-1}} = -\tfrac{\kappa^v_t - \kappa^v_{t-1}}
{\kappa^v_t \kappa^v_{t-1}} = O(t^{-c_2-1})$ and $v_{t-1}$ is bounded), so
$\|\hat v_t - \hat v_{t-1}\|_\infty = O(t^{-1})$.  The initial step
$t = 1$, where $\hat v_0 := v_0$ by convention, is a single finite
exception absorbed into the constants, so $\|\hat v_t - \hat
v_{t-1}\|_\infty = O(t^{-1})$ for all $t \ge 1$.  Moreover $\hat v_{t-1}
\in [\epsilon, c_G]$ coordinatewise for all $t \ge 1$ ($\hat v_{t-1} =
v_{t-1}/\kappa^v_{t-1}$ for $t \ge 2$, and $\hat v_0 = v_0 =
\epsilon\mathbf 1$), with $c_G := (G^2+\epsilon)/c_2$.

\emph{Conclusion (both claims).}  Both $x \mapsto x^{-1/2}$ and
$x \mapsto x^{1/2}$ are Lipschitz on $[\epsilon, \infty)$, and
$P_t = \mathrm{Diag}(\hat v_{t-1})^{-1/2}$, $P_t^{-1} =
\mathrm{Diag}(\hat v_{t-1})^{1/2}$ are diagonal, so $\hat v_{t-1} \in
[\epsilon, c_G]$ gives the two-sided bounds $c_G^{-1/2} I \preceq P_t
\preceq \epsilon^{-1/2} I$ (whence the $\mu_\pm$-bounds on
$P_t^{1/2}HP_t^{1/2}$, whose eigenvalues equal those of $P_t H$ and lie in
$[p_-\lambda_{\min}(H), p_+\lambda_{\max}(H)]$), and the $O(t^{-1})$
increment of $\hat v_{t-1}$ gives $\|P_{t+1} - P_t\|_{\mathrm{op}} =
O(t^{-1})$ and, via $M_t = H^{-1}P_t^{-1}$, $\|M_t - M_{t-1}\|_{\mathrm
{op}} \le \|H^{-1}\|_{\mathrm{op}}\|P_t^{-1} - P_{t-1}^{-1}\|_{\mathrm{op}}
= O(t^{-1})$ with $\sup_t\|M_t\|_{\mathrm{op}} \le \|H^{-1}\|_{\mathrm{op}}
c_G^{1/2} < \infty$.  This is (i) and~(ii).
\end{proof}

\subsection{Bias-Correction Reduction}
\label{sec:bias-correction}

The analysis in Sections~\ref{sec:augmented} and~\ref{sec:asymptotics} and
Appendices~\ref{app:lyapunov}--\ref{app:clt-proof} replaces $\hat m_t
= m_t/\kappa^m_t$ by $m_t$ in the iterate update---the ``drop the bias
correction'' simplification flagged in Section~\ref{sec:sa-adam-defn}.
The bias-corrected preconditioner $P_t = \mathrm{Diag}(\hat
v_{t-1})^{-1/2}$ is already used as-is in the analysis, and
Proposition~\ref{prop:sa-adam-stab} verifies its stabilization condition
(Assumption~\ref{ass:stab}, cf.\ Appendix~\ref{app:lyapunov}); the only
piece to restore is therefore the first-moment correction $\hat m_t$ in
the iterate update.

Since the first-moment bias correction
is a deterministic scalar factor, $\hat m_t = m_t/\kappa^m_t$ folds into
the step size---the standard view of Adam's \emph{first-moment} bias
correction as an effective learning-rate factor \citep{kingma2015adam}
(the second-moment correction is not folded in here, being already
carried by the preconditioner $P_t$).  Writing this out in the
SA-Adam update~\eqref{eq:sa-adam} yields the \emph{exact} substitution
\begin{equation}
\label{eq:effective-stepsize}
  x_{t+1} = x_t - \eta_t\, P_t\, \hat m_t
          = x_t - \tilde\eta_t\, P_t\, m_t,
  \qquad
  \tilde\eta_t := \eta_t/\kappa^m_t .
\end{equation}
The buffer recursion for $m_t$ in~\eqref{eq:sa-adam} is untouched by the
first-moment correction.  Hence the bias-corrected algorithm is
\emph{identical} to the un-bias-corrected centered
recursion~\eqref{eq:error-recursion} under the deterministic scalar
step-size substitution $\eta_t \mapsto \tilde\eta_t$; the only task is to
record that $\tilde\eta_t$ retains the rate properties the analysis uses.
We argue this structurally, as a reparametrization of the same analyzed
recursion---not as a comparison of two distinct trajectories.

\begin{lemma}[First-moment bias correction as an effective step size]
\label{lem:bias-effective-step}
Let $\rho_t = c_1/(t + t_0)^\gamma$ with $c_1 > 0$, $\gamma \in (\alpha, 1)$,
and a burn-in offset $t_0 \ge 0$ fixed (per the convex-weight convention
of Section~\ref{sec:sa-adam-defn}) so that $\rho_s \in (0,1)$ for all
$s \ge 1$; the plain schedule $t_0 = 0$ is admissible whenever $c_1 < 1$.
Let $\kappa^m_t = 1 - \prod_{s=1}^t(1 - \rho_s)$ be the first-moment
bias-correction factor of~\eqref{eq:sa-adam}.  Every factor
$1 - \rho_s \in (0,1)$, so $\kappa^m_t \in (0,1)$ and increases to $1$.
Then the bias-corrected update
$x_{t+1} = x_t - \eta_t P_t \hat m_t$ coincides with the
un-bias-corrected recursion~\eqref{eq:error-recursion} at step size
$\tilde\eta_t = \eta_t/\kappa^m_t$, and for some $a > 0$,
\begin{equation}
\label{eq:tilde-eta-rate}
  \kappa^m_t = 1 - O\!\bigl(e^{-a t^{1-\gamma}}\bigr),
  \qquad
  \tilde\eta_t = \eta_0\, t^{-\alpha}\bigl(1 + O(e^{-a t^{1-\gamma}})\bigr).
\end{equation}
Consequently $\tilde\eta_t$ inherits the three rate facts on which the
augmented-state analysis of
Sections~\ref{sec:augmented}--\ref{sec:asymptotics} relies:
\begin{equation}
\label{eq:tilde-eta-facts}
  \tilde\eta_t \sim \eta_0 t^{-\alpha},
  \qquad
  |\tilde\eta_{t+1} - \tilde\eta_t| = O(t^{-\alpha-1}),
  \qquad
  \tilde\tau_t := \rho_t/\tilde\eta_t = \kappa^m_t\,\tau_t
  = \tau_t\bigl(1 + o(1)\bigr) \asymp t^{\alpha-\gamma} \to 0 .
\end{equation}
\end{lemma}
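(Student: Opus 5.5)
The proof has three steps: the exact substitution, an estimate of the bias-correction product, and then the three rate facts read off from that estimate.

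\emph{Exact substitution.} This is pure algebra. Since $\kappa^m_t$ is a deterministic scalar, $\eta_t P_t \hat m_t = \eta_t P_t m_t/\kappa^m_t = \tilde\eta_t P_t m_t$. The buffer recursion for $m_t$ does not involve $\kappa^m_t$. Hence the bias-corrected trajectory is literally the un-bias-corrected recursion~\eqref{eq:error-recursion} with $\eta_t$ replaced by $\tilde\eta_t$.

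\emph{The product.} Set $\pi_t := \prod_{s=1}^t(1-\rho_s)$, so that $1-\kappa^m_t = \pi_t$. The schedule convention gives $\rho_s\in(0,1)$, so $\pi_t\in(0,1)$ and $\pi_t$ decreases strictly; thus $\kappa^m_t\in(0,1)$ increases. Using $\log(1-x)\le -x$,
\[
  \pi_t \le \exp\Bigl(-\sum_{s=1}^t \rho_s\Bigr),
  \qquad
  \sum_{s=1}^t \frac{c_1}{(s+t_0)^\gamma} \ge \int_1^{t+1}\frac{c_1\,dx}{(x+t_0)^\gamma}
  = \frac{c_1}{1-\gamma}\bigl[(t+1+t_0)^{1-\gamma}-(1+t_0)^{1-\gamma}\bigr].
\]
Because $t_0$ is fixed, the right side equals $\frac{c_1}{1-\gamma}t^{1-\gamma} - O(1)$. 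This yields $\pi_t = O(e^{-a t^{1-\gamma}})$ for every $a<c_1/(1-\gamma)$, and in particular $\pi_t\to0$, so $\kappa^m_t\to 1$. Then
\[
  1/\kappa^m_t = 1+\pi_t/(1-\pi_t) = 1+O(\pi_t),
\]
using that $1-\pi_t \ge 1-\pi_1 = \rho_1 > 0$. This gives both statements of~\eqref{eq:tilde-eta-rate}.

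\emph{The three facts.} The asymptotic $\tilde\eta_t\sim\eta_0t^{-\alpha}$ follows at once. For the increment, I would write
\[
  \tilde\eta_{t+1}-\tilde\eta_t = \frac{\eta_{t+1}-\eta_t}{\kappa^m_{t+1}} + \eta_t\Bigl(\frac1{\kappa^m_{t+1}}-\frac1{\kappa^m_t}\Bigr).
\]
The first term is $O(t^{-\alpha-1})$, since $\kappa^m_{t+1}\ge\rho_1$ is bounded away from $0$. For the second term, $|1/\kappa^m_{t+1}-1/\kappa^m_t| \le (\kappa^m_{t+1}-\kappa^m_t)/\rho_1^2$, and $\kappa^m_{t+1}-\kappa^m_t = \rho_{t+1}\pi_t = O(e^{-at^{1-\gamma}})$. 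This is super-polynomially small and hence $o(t^{-1})$, so the whole increment is $O(t^{-\alpha-1})$. Finally,
\[
  \tilde\tau_t = \rho_t\kappa^m_t/\eta_t = \kappa^m_t\tau_t,
\]
and $\kappa^m_t\in[\rho_1,1]$ with $\kappa^m_t\to1$ gives $\tilde\tau_t=\tau_t(1+o(1))\asymp t^{\alpha-\gamma}\to 0$.

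\emph{Main obstacle.} The only delicate point is uniformity over all $t\ge1$ rather than only asymptotically. Big-$O$ constants must hold from $t=1$, and this relies on the uniform lower bound $\kappa^m_t\ge\kappa^m_1=\rho_1>0$, which follows from monotonicity. With that bound, finitely many early indices can be absorbed into the constants. A secondary check is that the shift $t_0$ changes only constants: $(t+t_0)^{-\gamma}\sim t^{-\gamma}$, so $\tau_t\asymp t^{\alpha-\gamma}$ persists.
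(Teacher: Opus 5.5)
Your proof is correct and follows the paper's argument essentially step for step: the exact substitution, an exponential bound on $\prod_{s\le t}(1-\rho_s)$ via the logarithm and an integral comparison, the same two-term split of $\tilde\eta_{t+1}-\tilde\eta_t$ using $\kappa^m_{t+1}-\kappa^m_t=\rho_{t+1}\prod_{s\le t}(1-\rho_s)$, and $\tilde\tau_t=\kappa^m_t\tau_t$. You deviate only in two minor simplifications. First, you use the one-sided bound $\log(1-x)\le -x$ instead of the paper's two-sided expansion, so you never need $\sum_s\rho_s^2<\infty$. Second, you use the uniform bound $\kappa^m_t\ge\rho_1$ instead of the paper's ``$\kappa^m_t\ge\frac12$ for large $t$,'' which makes your constants valid from $t=1$.
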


\begin{proof}
The substitution~\eqref{eq:effective-stepsize} is an exact algebraic
identity, so only the rate
claims~\eqref{eq:tilde-eta-rate}--\eqref{eq:tilde-eta-facts} require
proof.

\emph{Rate of $\kappa^m_t$.}  By the convex-weight convention,
$\rho_s \in (0,1)$ for all $s \ge 1$, so
$1 - \kappa^m_t = \prod_{s=1}^t(1 - \rho_s) \in (0,1)$ with every factor
positive---no burn-in prefactor is needed.  On
$[0, \sup_{s\ge1}\rho_s] \subset [0,1)$ the expansion
$\log(1-x) = -x + O(x^2)$ holds with a uniform constant, so
$\log\bigl(1 - \kappa^m_t\bigr) = \sum_{s=1}^t \log(1 - \rho_s)
  = -\sum_{s=1}^t \rho_s + O\Bigl(\sum_{s=1}^t \rho_s^2\Bigr)$.
Because $\gamma > 1/2$ (indeed $\gamma > \alpha > 1/2$),
$\sum_s \rho_s^2 = O\bigl(\sum_s s^{-2\gamma}\bigr) < \infty$, so the
remainder is $O(1)$; and by integral comparison, $\sum_{s=1}^t \rho_s =
\frac{c_1}{1-\gamma}\,(t+t_0)^{1-\gamma} + O(1) =
\frac{c_1}{1-\gamma}\,t^{1-\gamma} + O(1)$ (the offset $t_0$ enters only
the $O(1)$ term).  Hence
$\log(1 - \kappa^m_t) = -\frac{c_1}{1-\gamma}\,t^{1-\gamma} + O(1)$, and
therefore, for every $0 < a < c_1/(1-\gamma)$,
$1 - \kappa^m_t = \prod_{s=1}^t(1 - \rho_s)
  = O\bigl(e^{-a t^{1-\gamma}}\bigr)$,
the first identity in~\eqref{eq:tilde-eta-rate}.  In particular
$\kappa^m_t \to 1$, so $\kappa^m_t \ge \tfrac12$ for all $t$ large and
$(\kappa^m_t)^{-1} = 1 + O(e^{-a t^{1-\gamma}})$, giving
$\tilde\eta_t = \eta_t/\kappa^m_t = \eta_0 t^{-\alpha}
\bigl(1 + O(e^{-a t^{1-\gamma}})\bigr)$---the second identity
in~\eqref{eq:tilde-eta-rate} and the leading power
$\tilde\eta_t \sim \eta_0 t^{-\alpha}$ in~\eqref{eq:tilde-eta-facts}.

\emph{Smoothness.}  Decompose the increment at $t$ as
$\tilde\eta_{t+1} - \tilde\eta_t
  = \frac{\eta_{t+1} - \eta_t}{\kappa^m_{t+1}}
  - \eta_t\,\frac{\kappa^m_{t+1} - \kappa^m_t}
                 {\kappa^m_{t+1}\,\kappa^m_t}$.
The first term is $O(t^{-\alpha-1})$, since $|\eta_{t+1} - \eta_t| =
\eta_0\bigl|(t+1)^{-\alpha} - t^{-\alpha}\bigr| = O(t^{-\alpha-1})$ and
$\kappa^m_{t+1} \ge \tfrac12$.  For the second, the telescoping
$\kappa^m_{t+1} - \kappa^m_t = \prod_{s=1}^{t}(1-\rho_s)
- \prod_{s=1}^{t+1}(1-\rho_s) = \rho_{t+1}\prod_{s=1}^{t}(1-\rho_s)
= O\bigl(t^{-\gamma} e^{-a t^{1-\gamma}}\bigr)$, multiplied by
$\eta_t/(\kappa^m_{t+1}\kappa^m_t) = O(t^{-\alpha})$, is
$O\bigl(t^{-\alpha-\gamma} e^{-a t^{1-\gamma}}\bigr)$: super-polynomially
small, hence $o(t^{-\alpha-1})$.  So $|\tilde\eta_{t+1} - \tilde\eta_t| =
O(t^{-\alpha-1})$, the second fact in~\eqref{eq:tilde-eta-facts}.

\emph{Two-time-scale gap.}  Finally
$\tilde\tau_t = \rho_t/\tilde\eta_t = \kappa^m_t\,(\rho_t/\eta_t)
= \kappa^m_t\,\tau_t = \tau_t\bigl(1 + O(e^{-a t^{1-\gamma}})\bigr)$, and
$\tau_t \sim (c_1/\eta_0)\,t^{\alpha-\gamma} \to 0$ because
$\gamma > \alpha$ (with equality for the plain schedule $t_0 = 0$);
this is the third fact in~\eqref{eq:tilde-eta-facts}.
\end{proof}

The bias correction is thus \emph{better behaved} than the canonical
$\gamma = 1$ case, where $1 - \kappa^m_t$ decays only polynomially.
With bias correction restored, the corrected recursion has the same
algebraic form.  The bias-corrected analogue of the centered
augmented recursion~\eqref{eq:joint-recursion} reads $z_{t+1} =
(I - \tilde\eta_t \tilde L_t)z_t + \tilde B_t(u_t + \xi_t)$ with
\[
  \tilde L_t = \begin{pmatrix}
    \rho_t P_t H & (1-\rho_t) P_t \\
    -\tilde\tau_t H & \tilde\tau_t I
  \end{pmatrix}
  = L(P_t, \rho_t, \tilde\tau_t),
  \qquad
  \tilde B_t = \begin{pmatrix} -\tilde\eta_t \rho_t P_t \\ \rho_t I
  \end{pmatrix},
  \qquad
  \tilde\tau_t = \rho_t/\tilde\eta_t .
\]
This is exactly the drift~\eqref{eq:Lt} and noise coupling $B_t$
of~\eqref{eq:joint-recursion} with $(\eta_t, \tau_t, L_t, B_t)$ replaced
by $(\tilde\eta_t, \tilde\tau_t, \tilde L_t, \tilde B_t)$; the
substitution acts only through the deterministic step size and leaves
the buffer rate $\rho_t$, the preconditioner $P_t$, and the noise
covariance structure unchanged.

Every ingredient of the analysis depends on the step size only through
this structural form and the rate facts~\eqref{eq:tilde-eta-facts}, both
preserved.  The symmetrizer $\tilde Q_t$ of Definition~\ref{def:Qsym} is
obtained by the substitution $(\eta_t, \tau_t) \mapsto (\tilde\eta_t,
\tilde\tau_t)$, and the exact one-step
identity~\eqref{eq:Qsym-onestep}---algebraic in $L(P,\rho,\tau)$ with
$\tau = \rho/\eta$---holds verbatim, $(I - \tilde\eta_t \tilde L_t)^\top
\tilde Q_t (I - \tilde\eta_t \tilde L_t) = (1 - \rho_t)\tilde Q_t$.  Since
$\tilde\eta_t = \eta_t(1+o(1))$ and $\tilde\tau_t = \tau_t(1+o(1))$
(super-polynomially), the symmetrizer bounds and blockwise time-variation
estimates (Lemmas~\ref{lem:Qsym-bounds}, \ref{lem:Qsym-timevar}) keep
their rates, so the MSE bound of Proposition~\ref{prop:augmented-mse} is
intact.

The exact leading coefficient is likewise unchanged: applying
Lemma~\ref{lem:Linv} to $\tilde L_t = L(P_t, \rho_t, \tilde\tau_t)$ gives
$\tilde A_t \tilde B_t = \tilde L_t^{-1}\tilde B_t/\tilde\eta_t =
(-H^{-1}, 0)^\top$, exactly as in~\eqref{eq:AtBt} and by the same
cancellation $\rho_t\tilde\tau_t^{-1} = \tilde\eta_t$.  The remainder
bound (Lemma~\ref{lem:Rnz-bound}) uses the augmented mean-square
rates, the buffer recursion for $m_t$ (untouched by the first-moment
correction), and this exact identity---all preserved---so the CLT proof
of Appendix~\ref{app:clt-proof} and the projection identity
(Theorem~\ref{thm:projection}) carry over.  Hence the effective-step
reduction $\eta_t \mapsto \tilde\eta_t$ leaves every ingredient of the
analysis intact, so the bias-corrected SA-Adam
recursion~\eqref{eq:sa-adam} may be analyzed as the un-bias-corrected
recursion with effective step $\tilde\eta_t$---this is Step~0 of the main
theorem's proof (Section~\ref{sec:main-theorem} below).

\subsection{Main Theorem}
\label{sec:main-theorem}

The main theorem is where the pieces assemble.  Combining the preconditioner
verification (Proposition~\ref{prop:sa-adam-stab}) and the bias-correction
reduction (Section~\ref{sec:bias-correction}) with the general augmented-state
results of Sections~\ref{sec:augmented}--\ref{sec:asymptotics} yields the
paper's main result: the Polyak--Ruppert average of SA-Adam is
$\sqrt n$-asymptotically normal at the efficient sandwich covariance
$H^{-1}SH^{-1}$, underpinning one-pass Wald inference for SA-Adam.  Its
proof carries out no new analysis---it assembles these three
ingredients: Sections~\ref{sec:augmented}--\ref{sec:asymptotics} supply the
Gaussian limit, while Sections~\ref{sec:sa-adam-verify}
and~\ref{sec:bias-correction} discharge its two algorithm-specific hypotheses.

\begin{theorem}[SA-Adam Polyak--Ruppert CLT]
\label{thm:main}
Assume the standing Assumptions~\ref{ass:mg}--\ref{ass:quadratic},
bounded stochastic gradients $\|g_t\| \le G$ a.s., the fourth-moment
stability of Assumption~\ref{ass:fourth}, the conditional-covariance
continuity at $x^*$ of Theorem~\ref{thm:augmented-clt}(a)---$\mathbb
E[\xi_t\xi_t^\top \mid \mathcal F_{t-1}] = S(x_t)$ for a deterministic map
$S(\cdot)$ with $S(x) \to S$ as $x \to x^*$---and the SA-Adam
recursion~\eqref{eq:sa-adam} with momentum schedule
$\rho_t = c_1/t^\gamma$, $c_1 > 0$, $\gamma \in (\alpha, 1)$ (under the
convex-weight convention of Section~\ref{sec:sa-adam-defn}, i.e.\ the
shifted schedule when $c_1 \ge 1$), $c_2 \in (0, 1]$, and
$\eta_t = \eta_0 t^{-\alpha}$, $\alpha \in (1/2, 1)$ (the full
bias-corrected algorithm, with both $\hat m_t$ and $\hat v_t$
corrections).  Then:
\begin{enumerate}
\item[\textnormal{(i)}] (\emph{Augmented mean-square stability})
  $\mathbb E\|\Delta_t\|^2 = O(t^{-\alpha})$ and
  $\mathbb E\|m_{t-1}\|^2 = O(t^{-\gamma})$; in particular
  Assumption~\ref{ass:iterate} holds.
\item[\textnormal{(ii)}] (\emph{Asymptotic normality})
$\sqrt n\,(\overline x_n - x^*) \xrightarrow{d}
  \mathcal N(0, H^{-1}SH^{-1})$.
\end{enumerate}
\end{theorem}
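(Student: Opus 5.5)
The proof is an assembly of results already in hand; it introduces no new estimates. It has three steps: reduce the bias-corrected algorithm to the analyzed recursion, discharge the preconditioner hypotheses, and invoke the general augmented-state results.

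\emph{Step 0 (bias-correction reduction).} By Lemma~\ref{lem:bias-effective-step}, the full bias-corrected update $x_{t+1}=x_t-\eta_tP_t\hat m_t$ coincides exactly with the un-bias-corrected centered recursion~\eqref{eq:error-recursion} run at the effective step $\tilde\eta_t=\eta_t/\kappa^m_t$. The buffer recursion and $P_t$ are unchanged. The three rate facts~\eqref{eq:tilde-eta-facts} hold: $\tilde\eta_t\sim\eta_0t^{-\alpha}$, the increment is $O(t^{-\alpha-1})$, and $\tilde\tau_t=\tau_t(1+o(1))\to0$. As recorded after that lemma, the drift $\tilde L_t=L(P_t,\rho_t,\tilde\tau_t)$ has the same algebraic form. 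Hence the symmetrizer identity, the bounds of Lemmas~\ref{lem:Qsym-bounds} and~\ref{lem:Qsym-timevar}, and the exact leading coefficient $\tilde A_t\tilde B_t=(-H^{-1},0)^\top$ all transfer verbatim. From here on I therefore work with the tilded recursion and drop the tildes.

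\emph{Step 1 (hypotheses on $P_t$).} Proposition~\ref{prop:sa-adam-stab} uses only $\|g_t\|\le G$, $\epsilon>0$ and $c_2\in(0,1]$. It yields:
\begin{itemize}
\item Assumption~\ref{ass:stab} with $\beta=1>(\alpha+1)/2$, which is valid because $\alpha<1$;
\item the standing conditions~\eqref{eq:prop-precond}, namely two-sided bounds on $P_t$ and on $P_t^{1/2}HP_t^{1/2}$, and $\|P_{t+1}-P_t\|_{\mathrm{op}}=O(t^{-1})$.
\end{itemize}
Predictability of $P_t=\mathrm{Diag}(\hat v_{t-1})^{-1/2}$ is immediate, since it depends only on $g_1,\dots,g_{t-1}$. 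In particular, the spectral hypothesis of Lemma~\ref{lem:hurwitz} holds.

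\emph{Step 2 (part (i)).} All hypotheses of Proposition~\ref{prop:augmented-mse} are now verified:
\begin{itemize}
\item Assumptions~\ref{ass:mg}--\ref{ass:quadratic};
\item Assumption~\ref{ass:stab} and the standing conditions, from Step~1;
\item bounded gradients and Assumption~\ref{ass:fourth};
\item the schedule $\gamma\in(\alpha,1)$.
\end{itemize}
The proposition gives~\eqref{eq:prop-mse-split}, which is exactly (i), including Assumption~\ref{ass:iterate}.

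\emph{Step 3 (part (ii)).} Theorem~\ref{thm:augmented-clt} requires the same inputs plus (a), which is assumed, and (b), which is part (i). It therefore delivers the joint limit~\eqref{eq:joint-clt}. Its iterate block is $H^{-1}SH^{-1}$ by Theorem~\ref{thm:projection}, which applies for every $t$ with no limit $P_t\to P$ required. Projecting onto the first $d$ coordinates via the continuous mapping theorem gives $\sqrt n\,\overline\Delta_n\xrightarrow{d}\mathcal N(0,H^{-1}SH^{-1})$. Since $\overline\Delta_n=\overline x_n-x^*$, this is (ii).

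\emph{Main obstacle.} The only delicate point is Step~0: confirming that nothing in the CLT remainder bound (Lemma~\ref{lem:Rnz-bound}) uses the exact identity $\tau_t=\rho_t/\eta_t$ with the untilded $\eta_t$, or a specific power law beyond~\eqref{eq:tilde-eta-facts}. First, I would check that the increments $A_t-A_{t-1}$ with $A_t=\tilde L_t^{-1}/\tilde\eta_t$ still decay at the rate the bound consumes. By Lemma~\ref{lem:Linv}, $A_t$ is built from $M_t$, $P_t^{-1}$, $\tilde\tau_t^{-1}$ and $\rho_t$, each divided by $\tilde\eta_t$. The super-polynomially small correction $\kappa^m_t-1$ perturbs each of these increments only by $o(t^{-1-\alpha})$-type terms, so the increment rates match the untilded case. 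The other part of the check, at the boundary $A_nz_{n+1}$, is that $\tilde\tau_t\asymp\tau_t$; this holds uniformly because $\kappa^m_t\in[\kappa^m_1,1]$ is bounded away from zero.
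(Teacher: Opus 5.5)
Your proposal is correct and follows the paper's proof essentially step for step: the effective-step reduction $\eta_t\mapsto\tilde\eta_t$ via Lemma~\ref{lem:bias-effective-step}, then the preconditioner verification of Proposition~\ref{prop:sa-adam-stab}, then Proposition~\ref{prop:augmented-mse} for (i), and finally Theorem~\ref{thm:augmented-clt} with the projection identity for (ii). One small difference is in your ``main obstacle'' check: the paper's remainder bound (Lemma~\ref{lem:Rnz-bound}) never bounds the increments $A_t-A_{t-1}$ directly, but identifies $R^z_{n,1}=H^{-1}\overline g_n$ and $R^z_{n,2}=\overline m_{n-1}$ and sums by parts with the coefficients $\tilde\eta_t^{-1}P_t^{-1}$ and $\rho_t^{-1}$, so what must transfer is the rate of those coefficients---which the rate facts you cite supply just as well as they supply your direct increment check.
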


\begin{proof}
\emph{Step 0: bias-correction reduction.}  By the effective-step-size
reduction of Section~\ref{sec:bias-correction}
(Lemma~\ref{lem:bias-effective-step}), the bias-corrected
recursion~\eqref{eq:sa-adam} is the un-bias-corrected centered
recursion~\eqref{eq:error-recursion} of
Sections~\ref{sec:augmented}--\ref{sec:asymptotics} with $\eta_t$
replaced by the effective step $\tilde\eta_t = \eta_t/\kappa^m_t$.  Both
Proposition~\ref{prop:augmented-mse} and
Theorem~\ref{thm:augmented-clt} use the step size only through the rate
properties
$\tilde\eta_t = \eta_0 t^{-\alpha}(1 + o(1))$,
$|\tilde\eta_{t+1} - \tilde\eta_t| = O(t^{-\alpha-1})$,
$\tilde\tau_t := \rho_t/\tilde\eta_t \asymp t^{\alpha-\gamma}$,
the exact identity $A_t B_t = (-H^{-1},0)^\top$, and---for the Lyapunov
bound of Proposition~\ref{prop:augmented-mse}---the symmetrizer one-step
identity $(I - \tilde\eta_t \tilde L_t)^\top \tilde Q_t (I - \tilde\eta_t
\tilde L_t) = (1-\rho_t)\tilde Q_t$ of Appendix~\ref{app:lyapunov}.  By
Lemma~\ref{lem:bias-effective-step}, $\tilde\eta_t$ shares all three rate
properties with $\eta_t$ (the corrections being $O(e^{-a t^{1-\gamma}})$);
and since both the $A_t B_t$ identity and the symmetrizer identity depend
on the step size only through the relation $\tilde\tau_t =
\rho_t/\tilde\eta_t$, they are preserved under $\eta_t \mapsto
\tilde\eta_t$ (verified blockwise in Section~\ref{sec:bias-correction}).
Hence every result invoked below holds verbatim for the bias-corrected
algorithm, and we argue on the analyzed recursion.

\emph{Step 1: preconditioner verification.}  The preconditioner $P_t =
\mathrm{Diag}(\hat v_{t-1})^{-1/2}$ is predictable
($\mathcal F_{t-1}$-measurable, via the lagged $\hat v_{t-1}$), and by
Proposition~\ref{prop:sa-adam-stab} satisfies the stabilization
condition (Assumption~\ref{ass:stab}, rate $\beta = 1 > (\alpha+1)/2$)
and the standing preconditioner conditions~\eqref{eq:appA-standing}.
These are precisely the preconditioner hypotheses required by
Proposition~\ref{prop:augmented-mse} and
Theorem~\ref{thm:augmented-clt}.

\emph{Part (i): iterate stability.}  With Step~1 in hand,
Proposition~\ref{prop:augmented-mse} applies and yields the
two-time-scale split~\eqref{eq:prop-mse-split}; in particular
$\mathbb E\|\Delta_t\|^2 = O(t^{-\alpha})$ (so
Assumption~\ref{ass:iterate} holds) and $\mathbb E\|m_{t-1}\|^2 =
O(t^{-\gamma})$.

\emph{Part (ii): asymptotic normality.}  The remaining hypotheses of
Theorem~\ref{thm:augmented-clt} are now in place: the augmented
mean-square bounds (hypothesis~(b),~\eqref{eq:clt-stability}) are
Part~(i); the conditional-covariance continuity (hypothesis~(a)) and the
schedule $\rho_t = c_1/t^\gamma$, $\gamma\in(\alpha,1)$, hold by
assumption; and Assumption~\ref{ass:stab} was verified in Step~1.
Therefore Theorem~\ref{thm:augmented-clt} yields $\sqrt n\,\overline z_n
\xrightarrow{d} \mathcal N\bigl(0,\, \mathrm{diag}(H^{-1}SH^{-1},
0)\bigr)$, whose $(1,1)$ block is the projection identity of
Theorem~\ref{thm:projection}.  Since the first $d$ coordinates of
$\overline z_n$ are $\overline\Delta_n = \overline x_n - x^*$, the
iterate marginal gives $\sqrt n\,(\overline x_n - x^*) \xrightarrow{d}
\mathcal N(0, H^{-1}SH^{-1})$.
\end{proof}

\begin{remark}[Scope: a local, conditional theorem]
\label{rem:scope}
Theorem~\ref{thm:main} is a \emph{local} asymptotic-normality result,
conditional on the trajectory confinement of
Assumption~\ref{ass:quadratic} and the fourth-moment stability of
Assumption~\ref{ass:fourth}---the standard hypothesis stack for
Polyak--Ruppert-type CLTs.  It is not a global convergence theorem for
SA-Adam: we do not establish that it reaches the neighborhood
$\mathcal N$ of $x^*$ from an arbitrary start, nor do we relax the
bounded-gradient and confinement requirements.  The contribution is the
paired-drift mechanism---an adaptive preconditioner not assumed to
converge, coupled to a time-varying momentum buffer, leaves the
iterate-marginal sandwich $H^{-1}SH^{-1}$ intact, \emph{granted} that the
iterates stabilize.
\end{remark}

As a downstream consequence, the CLT of Theorem~\ref{thm:main} supplies
the asymptotic distribution needed for Wald-type online inference on
$x^*$.  By Slutsky's theorem, valid marginal confidence intervals follow
once the limiting covariance $H^{-1}SH^{-1}$ is consistently estimated,
or a pivotal statistic is formed: the online covariance estimators of
\citet{chen2020statistical} (plug-in / batch-means) and
\citet{zhu2023online} (fully online), and the random-scaling procedure of
\citet{lee2021fast} (which avoids estimating the asymptotic variance),
all developed for unpreconditioned averaged SGD, can be combined with
Theorem~\ref{thm:main} once their estimator-specific consistency or
pivotal conditions are verified in the SA-Adam setting---the same
downstream step taken for the SA-type estimators of \citet{anhuo2026}.

\subsection{Side Variants}
\label{sec:variants}

The augmented-state framework accommodates several Adam-family variants
with minor modifications; we treat three in turn---SA-AMSGrad, coupled
($L_2$) weight decay, and the full-matrix SA-Adam-full.  For SA-AMSGrad
and SA-Adam-full only the preconditioner verification of
Section~\ref{sec:sa-adam-verify} need be re-established; coupled weight
decay is SA-Adam on a ridge objective.  The main theorem then applies,
with the preconditioner-independent projection identity
(Theorem~\ref{thm:projection}) fixing the sandwich form of the limit.

\citet{reddi2018convergence} introduced AMSGrad as a fix for a
non-convergence pathology in Adam, replacing $\hat v_t$ in the update
by $\bar v_t := \max(\bar v_{t-1}, \hat v_t)$, taken coordinatewise.
In the SA-Adam framework the preconditioner uses the
\emph{one-step-lagged} running maximum, $P_t = \mathrm{Diag}(\bar
v_{t-1})^{-1/2}$ with $\bar v_{t-1} = \max(\bar v_{t-2}, \hat v_{t-1})$,
so that $P_t$ remains $\mathcal F_{t-1}$-measurable (whereas canonical
AMSGrad uses the current $\bar v_t$); the resulting SA-AMSGrad update
preserves the $O(t^{-1})$ stabilization rate on $M_t$:

\begin{proposition}[SA-AMSGrad stabilization]
\label{prop:amsgrad-stab}
Consider the SA-AMSGrad update---SA-Adam~\eqref{eq:sa-adam} with
$\hat v_{t-1}$ replaced by $\bar v_{t-1} := \max(\bar v_{t-2},
\hat v_{t-1})$ coordinatewise (base case $\bar v_0 := \hat v_0 = v_0$)
---under bounded stochastic gradients $\|g_t\| \le G$ a.s.,
$c_2 \in (0,1]$, and $v_0 = \epsilon\mathbf 1$ with $\epsilon > 0$.  Then
$\|M_t - M_{t-1}\|_{\mathrm{op}} \le C\, t^{-1}$ a.s.\ for all $t \ge 2$,
and SA-AMSGrad satisfies the stabilization condition
(Assumption~\ref{ass:stab}, $\beta = 1$) and the standing
conditions~\eqref{eq:appA-standing}.  Consequently, under the full
hypotheses of Theorem~\ref{thm:main}---the same step-size and momentum
schedules, bounded gradients, and conditional-covariance continuity at
$x^*$, with the trajectory confinement (Assumption~\ref{ass:quadratic})
and fourth-moment stability (Assumption~\ref{ass:fourth}) evaluated along
the SA-AMSGrad trajectory---the conclusion of Theorem~\ref{thm:main}
holds with the same sandwich limit $H^{-1}SH^{-1}$.
\end{proposition}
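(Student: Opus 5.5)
The plan is to reduce everything to the SA-Adam verification of Proposition~\ref{prop:sa-adam-stab}. That proof already gives, coordinatewise and almost surely, $\hat v_{t-1}\in[\epsilon,c_G]$ and $\|\hat v_t-\hat v_{t-1}\|_\infty\le C' t^{-1}$ for all $t\ge1$, with $c_G=(G^2+\epsilon)/c_2$. Neither fact depends on the trajectory beyond $\|g_t\|\le G$, so both hold along the SA-AMSGrad trajectory too. The argument then has two steps: transfer the bounds to the lagged running maximum $\bar v_{t-1}$, and then repeat the "Conclusion" paragraph of that proof with $\hat v$ replaced by $\bar v$.

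\emph{Step 1: range of $\bar v$.} By induction from $\bar v_0=v_0=\epsilon\mathbf 1$, each coordinate of $\bar v_t=\max(\bar v_{t-1},\hat v_t)$ is a maximum of finitely many numbers in $[\epsilon,c_G]$, so $\bar v_t\in[\epsilon,c_G]$. Moreover $\bar v_{t-1}$ is a function of $\hat v_0,\dots,\hat v_{t-1}$, hence $\mathcal F_{t-1}$-measurable. Thus $P_t=\mathrm{Diag}(\bar v_{t-1})^{-1/2}$ is predictable.

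\emph{Step 2: increment of $\bar v$.} This is the only genuinely new point, and I expect it to be the main (though mild) obstacle, since a running maximum could in principle jump. The key is the elementary inequality
\[
0\le \max(a,b)-a\le (b-a)_+ .
\]
Apply it coordinatewise with $a=\bar v_{t-1,i}$ and $b=\hat v_{t,i}$. Since $\bar v_{t-1,i}\ge\hat v_{t-1,i}$, we get
\[
0\le \bar v_{t,i}-\bar v_{t-1,i}\le(\hat v_{t,i}-\bar v_{t-1,i})_+\le(\hat v_{t,i}-\hat v_{t-1,i})_+\le C't^{-1}.
\]
So the running maximum is monotone, and its increments are dominated by those of $\hat v$: $\|\bar v_t-\bar v_{t-1}\|_\infty\le C't^{-1}$ for all $t\ge1$, with no summability or averaging argument needed.

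\emph{Step 3: conclusion.} Because $x\mapsto x^{\pm1/2}$ are Lipschitz on $[\epsilon,c_G]$ and all matrices involved are diagonal, Steps 1--2 give the following bounds:
\begin{itemize}
\item $c_G^{-1/2}I\preceq P_t\preceq\epsilon^{-1/2}I$;
\item the $\mu_\pm$ bounds on $P_t^{1/2}HP_t^{1/2}$, taking $\mu_-=c_G^{-1/2}\lambda_{\min}(H)$ and $\mu_+=\epsilon^{-1/2}\lambda_{\max}(H)$;
\item $\|P_{t+1}-P_t\|_{\mathrm{op}}=O(t^{-1})$.
\end{itemize}
Writing $M_t=H^{-1}P_t^{-1}=H^{-1}\mathrm{Diag}(\bar v_{t-1})^{1/2}$, we also get
\[
\|M_t-M_{t-1}\|_{\mathrm{op}}\le\|H^{-1}\|_{\mathrm{op}}\,(2\sqrt\epsilon)^{-1}\,C'(t-1)^{-1}\le Ct^{-1}\quad(t\ge2),
\]
and $\sup_t\|M_t\|_{\mathrm{op}}\le\|H^{-1}\|_{\mathrm{op}}c_G^{1/2}$. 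This is Assumption~\ref{ass:stab} with $\beta=1>(\alpha+1)/2$, together with the standing conditions~\eqref{eq:appA-standing}.

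For the final claim, I would rerun the proof of Theorem~\ref{thm:main} verbatim. Step~0 (the bias-correction reduction via Lemma~\ref{lem:bias-effective-step}) concerns only $\hat m_t$ and $\rho_t$, which SA-AMSGrad leaves unchanged. Step~1 is replaced by the verification above. Proposition~\ref{prop:augmented-mse} and Theorem~\ref{thm:augmented-clt} use the preconditioner only through predictability, the standing conditions, and Assumption~\ref{ass:stab}, with Assumptions~\ref{ass:quadratic} and~\ref{ass:fourth} imposed along the SA-AMSGrad trajectory. The limiting covariance is $H^{-1}SH^{-1}$ by the $P$-independence of Theorem~\ref{thm:projection}.
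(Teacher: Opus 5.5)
Your proposal is correct and follows the paper's proof essentially step for step. It uses the same range bound $\bar v_{t-1}\in[\epsilon,c_G]$ for the lagged running maximum, and the same domination $0\le \bar v_{t,i}-\bar v_{t-1,i}=(\hat v_{t,i}-\bar v_{t-1,i})_+\le(\hat v_{t,i}-\hat v_{t-1,i})_+$ via $\bar v_{t-1}\ge\hat v_{t-1}$. It also makes the same Lipschitz transfer through $x\mapsto x^{\pm1/2}$ to $P_t$ and $M_t$, and the same verbatim reuse of the proof of Theorem~\ref{thm:main} with the $P$-independent projection identity. Your explicit predictability check and explicit $\mu_\pm$ constants are minor additions that the paper handles in the surrounding text.
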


\begin{proof}
Take the max coordinatewise with base case $\bar v_0 := \hat v_0 =
\epsilon\mathbf 1$, so that $\bar v_{t-1,i} = \max_{0 \le s \le t-1}
\hat v_{s,i}$ (the base term $\hat v_0 = \epsilon\mathbf 1$ is included).
We show $\bar v_{t-1}$ inherits the two properties of $\hat v_{t-1}$ used
in Proposition~\ref{prop:sa-adam-stab}: a uniform two-sided bound and an
$O(t^{-1})$ increment.

\emph{Uniform bounds.}  Each $\hat v_{s,i} \in [\epsilon, c_G]$
(Proposition~\ref{prop:sa-adam-stab}), so the coordinatewise maximum
satisfies $\bar v_{t-1,i} \in [\epsilon, c_G]$ as well; hence the
preconditioner $P_t = \mathrm{Diag}(\bar v_{t-1})^{-1/2}$ obeys the same
uniform ellipticity, $c_G^{-1/2} I \preceq P_t \preceq \epsilon^{-1/2}
I$.

\emph{Increment.}  Fix a coordinate $i$.  Since $\bar v_{t-2,i} \ge
\hat v_{t-2,i}$ and $x \mapsto (x)_+$ is monotone,
$0 \le \bar v_{t-1,i} - \bar v_{t-2,i}
  = \bigl(\hat v_{t-1,i} - \bar v_{t-2,i}\bigr)_+
  \le \bigl(\hat v_{t-1,i} - \hat v_{t-2,i}\bigr)_+
  \le |\hat v_{t-1,i} - \hat v_{t-2,i}|$,
so $\|\bar v_{t-1} - \bar v_{t-2}\|_\infty \le \|\hat v_{t-1} - \hat
v_{t-2}\|_\infty = O(t^{-1})$ for all $t \ge 2$, by
Proposition~\ref{prop:sa-adam-stab}.  Both $x \mapsto x^{-1/2}$ and
$x \mapsto x^{1/2}$ are Lipschitz on $[\epsilon, \infty)$ (constants
$\tfrac12\epsilon^{-3/2}$ and $\tfrac12\epsilon^{-1/2}$), so the diagonal
maps $P_t = \mathrm{Diag}(\bar v_{t-1})^{-1/2}$ and $M_t = H^{-1}P_t^{-1}
= H^{-1}\mathrm{Diag}(\bar v_{t-1})^{1/2}$ carry this rate to
$\|P_t - P_{t-1}\|_{\mathrm{op}} = O(t^{-1})$ and
$\|M_t - M_{t-1}\|_{\mathrm{op}} = O(t^{-1})$ for $t \ge 2$,
with $\sup_t\|M_t\|_{\mathrm{op}} \le \|H^{-1}\|_{\mathrm{op}}\,c_G^{1/2}
< \infty$.  Together with the uniform ellipticity above, this is exactly
Assumption~\ref{ass:stab} ($\beta = 1$) and the standing
conditions~\eqref{eq:appA-standing}.  The conclusion of
Theorem~\ref{thm:main} then follows from its proof applied to the
SA-AMSGrad trajectory---the sandwich limit $H^{-1}SH^{-1}$ being
unchanged because the projection identity (Theorem~\ref{thm:projection})
is preconditioner-independent.

Finally, since $\bar v_{t-1}$ is coordinatewise nondecreasing and
bounded above by $c_G$, it converges; SA-AMSGrad thus has a
\emph{convergent} preconditioner, so the rate-only stabilization
established here is more than strictly needed---but it places SA-AMSGrad
in the same framework as the non-convergent variants.
\end{proof}

Weight decay for SA-Adam comes in two forms, only
one of which the paired-drift framework covers.
\citet{loshchilov2017decoupled} introduced AdamW with \emph{decoupled}
weight decay---the update appends $-\eta_t \lambda x_t$ for $\lambda > 0$
\emph{outside} the preconditioner---and this genuine AdamW is \emph{not}
covered here, because the unpreconditioned shrinkage breaks the $P_t H$
paired-drift structure (Remark~\ref{rem:adamw}).  What \emph{is} covered
is the \emph{coupled} ($L_2$-regularized) form, which folds the decay
into the gradient so that the preconditioned buffer sees the regularized
gradient: by Corollary~\ref{cor:adamw-coupled} this is SA-Adam applied to
the ridge objective $F_\lambda$, delivering $\sqrt n$-asymptotic
normality at the penalized minimizer $x^*_\lambda$ with the regularized
sandwich covariance $H_\lambda^{-1}S_\lambda H_\lambda^{-1}$
($H_\lambda = \nabla^2 F(x^*_\lambda) + \lambda I$,
$S_\lambda = S(x^*_\lambda)$), the canonical object for penalized
$M$-estimation.

\begin{corollary}[SA-Adam with coupled weight decay]
\label{cor:adamw-coupled}
Consider SA-Adam with \emph{coupled} weight decay at rate $\lambda > 0$,
in which the regularized gradient $g_t + \lambda x_t$ replaces $g_t$ in
the moment recursions~\eqref{eq:sa-adam} (so both the momentum buffer
$m_t$ and the second-moment buffer $v_t$ see $g_t + \lambda x_t$).  Let
$F_\lambda(x) := F(x) + \tfrac\lambda2\|x\|^2$ be the ridge-penalized
objective, with minimizer $x^*_\lambda := \arg\min_x F_\lambda(x)$ and
Hessian \emph{at $x^*_\lambda$} $H_\lambda := \nabla^2 F_\lambda(x^*_\lambda)
= \nabla^2 F(x^*_\lambda) + \lambda I \succ 0$ (which equals $H + \lambda I$
only when $F$ is quadratic, since then $\nabla^2 F \equiv H$); write
$S_\lambda := S(x^*_\lambda)$ for the gradient-noise covariance at
$x^*_\lambda$.  Then, under the hypotheses of Theorem~\ref{thm:main}
applied to $F_\lambda$,
\[
  \sqrt n\,(\overline x_n - x^*_\lambda) \xrightarrow{d}
  \mathcal N\!\bigl(0,\ H_\lambda^{-1} S_\lambda H_\lambda^{-1}\bigr).
\]
\end{corollary}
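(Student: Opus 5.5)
The plan is to realize the coupled-decay algorithm \emph{exactly} as SA-Adam run on the ridge objective $F_\lambda$ with the stochastic loss $f_\lambda(x,\zeta) := f(x,\zeta) + \tfrac\lambda2\|x\|^2$, and then invoke Theorem~\ref{thm:main}. This needs no new analysis.

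\textbf{Step 1: exact identification.} Since $\nabla f_\lambda(x_t,\zeta_t) = g_t + \lambda x_t$, the recursions~\eqref{eq:sa-adam} driven by the regularized gradient are literally the SA-Adam recursions for $(f_\lambda, F_\lambda)$. This holds for both buffers $m_t, v_t$, for the bias corrections, and for $P_t = \mathrm{Diag}(\hat v_{t-1})^{-1/2}$.

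\textbf{Step 2: the gradient noise is unchanged.} The noise is
\[
\xi^\lambda_t(x) = \nabla f_\lambda(x,\zeta_t) - \nabla F_\lambda(x) = \xi_t(x).
\]
The deterministic term $\lambda x$ cancels, so the conditional covariance map $S(\cdot)$ is unchanged. Hence the limiting covariance becomes $S_\lambda = S(x^*_\lambda)$, and Assumptions~\ref{ass:mg}--\ref{ass:var} and the continuity hypothesis~(a) transfer at the new centre $x^*_\lambda$, provided $S(\cdot)$ is continuous at $x^*_\lambda$. This is part of the phrase ``hypotheses applied to $F_\lambda$''.

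\textbf{Step 3: the remaining hypotheses.}
\begin{itemize}
\item Strong convexity improves: $F_\lambda$ is $(\mu+\lambda)$-strongly convex, so $x^*_\lambda$ is unique and $H_\lambda \succeq (\mu+\lambda)I$.
\item Assumption~\ref{ass:quadratic} is required of $F_\lambda$ around $x^*_\lambda$. The quadratic penalty adds nothing to the remainder $r_\lambda(x) = \nabla F_\lambda(x) - H_\lambda(x-x^*_\lambda)$ beyond $r$ re-expanded at $x^*_\lambda$.
\item The bounded-gradient hypothesis must be read for the regularized gradient $\|g_t+\lambda x_t\|\le G_\lambda$. This follows from $\|g_t\|\le G$ together with confinement of $x_t$ to the bounded neighbourhood $\mathcal N$, and it is what Proposition~\ref{prop:sa-adam-stab} consumes.
\item Proposition~\ref{prop:sa-adam-stab} then verifies Assumption~\ref{ass:stab} and the standing conditions~\eqref{eq:appA-standing} verbatim. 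In particular, the ellipticity bounds for $P_t^{1/2}H_\lambda P_t^{1/2}$ use only $p_\pm$ and the spectrum of $H_\lambda$.
\item Assumption~\ref{ass:fourth} is imposed along the actual trajectory, centred at $x^*_\lambda$.
\end{itemize}

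\textbf{Step 4: conclusion.} Theorem~\ref{thm:main}(ii) applied to $(F_\lambda, x^*_\lambda, H_\lambda, S_\lambda)$ gives the stated limit. The projection identity (Theorem~\ref{thm:projection}) explains why no preconditioner-dependent term appears: the drift is $P_tH_\lambda$, with the preconditioner acting on the full regularized Hessian. This is exactly the paired structure that decoupled decay would break.

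The main obstacle is not analytic but bookkeeping. One must check that every hypothesis of Theorem~\ref{thm:main} has been restated at the penalized minimizer rather than at $x^*$; this concerns especially the covariance continuity at $x^*_\lambda$ and the bounded-gradient bound for $g_t+\lambda x_t$. I would handle the latter through confinement, noting that if $\mathcal N$ were unbounded one would instead assume the regularized gradient bound directly.
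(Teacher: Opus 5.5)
Your proposal is correct and matches the paper's proof essentially step for step. You identify the coupled-decay algorithm as SA-Adam run on the ridge loss $f(x,\zeta)+\tfrac\lambda2\|x\|^2$, observe that the noise $\xi_t$ and hence $S(\cdot)$ are unchanged because $\lambda x_t$ is predictable, and use the fact that Proposition~\ref{prop:sa-adam-stab} carries over since it needs only boundedness of $g_t+\lambda x_t$ on the confinement region; you then invoke Theorem~\ref{thm:main} at $(x^*_\lambda, H_\lambda, S_\lambda)$, with Theorem~\ref{thm:projection} explaining the preconditioner-free sandwich, and your extra bookkeeping (the exact cancellation in $r_\lambda$ and the caveat about an unbounded $\mathcal N$) is accurate and simply makes explicit what the paper folds into ``hypotheses applied to $F_\lambda$''.
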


\begin{proof}
Coupling folds the decay into the gradient,
$g_t \mapsto g_t + \lambda x_t = \nabla f(x_t,\zeta_t) + \lambda x_t$,
which is the stochastic gradient of $F_\lambda$ at $x_t$.  Its
conditional mean is $\nabla F_\lambda(x_t) = \nabla F(x_t) + \lambda x_t$,
with Hessian $\nabla^2 F_\lambda(x^*_\lambda) = \nabla^2 F(x^*_\lambda) +
\lambda I = H_\lambda$ at $x^*_\lambda$; the decay term is
$\mathcal F_{t-1}$-measurable, so the conditional noise covariance is
unchanged,
$\mathrm{Cov}(g_t + \lambda x_t \mid \mathcal F_{t-1})
 = \mathrm{Cov}(g_t \mid \mathcal F_{t-1}) = S(x_t) \to S(x^*_\lambda) =
S_\lambda$.  The iterate-block drift accordingly becomes
$\rho_t P_t H_\lambda$, and the centered
recursion~\eqref{eq:error-recursion} holds with $(H, x^*)$ replaced by
$(H_\lambda, x^*_\lambda)$ and the same noise $\xi_t$.  The hypotheses of
Theorem~\ref{thm:main} \emph{as applied to $F_\lambda$}---trajectory
confinement, fourth-moment stability, and conditional-covariance
continuity, now at $x^*_\lambda$, together with $H_\lambda \succeq
(\mu+\lambda)I \succ 0$ (Assumption~\ref{ass:convex} plus the
$\lambda$-strong convexity of the penalty)---are assumed; the
preconditioner verification of Proposition~\ref{prop:sa-adam-stab} carries
over because it uses only boundedness of the buffered gradient
$g_t + \lambda x_t$ on the confinement region.  The theorem's conclusion
then gives the stated limit, with sandwich form $H_\lambda^{-1} S_\lambda
H_\lambda^{-1}$ because the projection identity of
Theorem~\ref{thm:projection} is preconditioner-independent.
\end{proof}

\begin{remark}[Decoupled weight decay is not covered]
\label{rem:adamw}
\emph{Decoupled} decay (genuine AdamW) is different: the unpreconditioned
term $-\eta_t\lambda x_t$ enters the iterate-block drift \emph{outside}
the preconditioner---schematically $\rho_t P_t H + \lambda I$---rather
than \emph{through} it as in the coupled drift $\rho_t P_t H_\lambda$.
Two consequences follow.  The limiting fixed point becomes
preconditioner-dependent: even if $P_t \to P_\infty$, it solves
$P_\infty \nabla F(x_\infty) + \lambda x_\infty = 0$, not in general
$x^*_\lambda$.  And the $P_tH$ structure behind the projection identity
breaks: for a frozen $P$ the leading iterate coefficient is
$-(H + \lambda P^{-1})^{-1}$, not $-H_\lambda^{-1}$ (agreeing only at
$P = I$), so the limit depends on $P$ and is not in general
$H_\lambda^{-1}SH_\lambda^{-1}$.  A correct treatment under a
non-convergent preconditioner is left to future work.
\end{remark}

Finally, \emph{SA-Adam-full}, the full-matrix variant we define here (not a
standard deployed optimizer), replaces the diagonal second-moment buffer
of SA-Adam by the symmetric positive-definite matrix
\[
  C_t = (1 - \rho^v_t)\,C_{t-1} + \rho^v_t\,(g_t g_t^\top + \epsilon I),
  \qquad \rho^v_t = c_2/t,\quad C_0 = \epsilon I,
\]
with bias correction $\hat C_t = C_t/\kappa^v_t$ and preconditioner
$P_t = \hat C_{t-1}^{-1/2}$ (the spectral map $A \mapsto A^{-1/2}$).
This mirrors the full-matrix AdaGrad variant of
\citet{anhuo2026}, now driving a momentum buffer.

\begin{proposition}[SA-Adam-full CLT]
\label{prop:full-matrix}
Consider the SA-Adam-full update---SA-Adam~\eqref{eq:sa-adam} with the
diagonal second moment replaced by the full-matrix $C_t$ above and
$P_t = \hat C_{t-1}^{-1/2}$---under bounded stochastic gradients
$\|g_t\| \le G$ a.s., $c_2 \in (0,1]$, $C_0 = \epsilon I$ with
$\epsilon > 0$, the schedules $\rho_t = c_1/t^\gamma$ ($c_1 > 0$,
$\gamma\in(\alpha,1)$; under the convex-weight convention of
Section~\ref{sec:sa-adam-defn}, shifted when $c_1 \ge 1$) and
$\eta_t = \eta_0 t^{-\alpha}$ ($\alpha\in(1/2,1)$) of
Theorem~\ref{thm:main}, and the same
Assumptions~\ref{ass:mg}--\ref{ass:quadratic} and~\ref{ass:fourth} and
conditional-covariance continuity (Theorem~\ref{thm:augmented-clt}(a))
evaluated along the SA-Adam-full trajectory.  Then the conclusion of
Theorem~\ref{thm:main} holds, with the same sandwich limit
$H^{-1}SH^{-1}$.
\end{proposition}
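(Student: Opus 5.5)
The plan follows the SA-AMSGrad argument (Proposition~\ref{prop:amsgrad-stab}). Only the preconditioner verification of Proposition~\ref{prop:sa-adam-stab} has to be redone for the matrix buffer. Steps~0 and~2 of the proof of Theorem~\ref{thm:main} are then reused verbatim. The bias-correction reduction (Lemma~\ref{lem:bias-effective-step}) involves only $\kappa^m_t$. The augmented results (Proposition~\ref{prop:augmented-mse}, Theorem~\ref{thm:augmented-clt}) require only a predictable symmetric $P_t$ obeying~\eqref{eq:appA-standing}. The projection identity (Theorem~\ref{thm:projection}) holds for any symmetric $P \succ 0$ without commutativity with $H$. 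So the task reduces to showing that $P_t = \hat C_{t-1}^{-1/2}$ is predictable, symmetric, uniformly elliptic, and has $O(t^{-1})$ increments.

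\emph{Step 1 (bounds).} $C_t$ is a convex combination of $C_0 = \epsilon I$ and the matrices $g_s g_s^\top + \epsilon I$. Since $\epsilon I \preceq g_s g_s^\top + \epsilon I \preceq (G^2+\epsilon) I$, we get $\epsilon I \preceq C_t \preceq (G^2+\epsilon)I$. The scalar $\kappa^v_t \in [c_2, 1]$ is the same as in the diagonal case. Hence $\epsilon I \preceq \hat C_{t-1} \preceq c_G I$ with $c_G = (G^2+\epsilon)/c_2$, using the convention $\hat C_0 := C_0$. By operator monotonicity, $c_G^{-1/2} I \preceq P_t \preceq \epsilon^{-1/2} I$. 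The bounds $\mu_\pm$ on $P_t^{1/2} H P_t^{1/2}$ then follow exactly as in Proposition~\ref{prop:sa-adam-stab}, and $P_t$ is $\mathcal F_{t-1}$-measurable.

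\emph{Step 2 (increments).} We have $\|C_t - C_{t-1}\|_{\mathrm{op}} = \rho^v_t \|g_t g_t^\top + \epsilon I - C_{t-1}\|_{\mathrm{op}} \le c_2 G^2/t$. The scalar bias-correction decomposition from Proposition~\ref{prop:sa-adam-stab} then gives $\|\hat C_t - \hat C_{t-1}\|_{\mathrm{op}} = O(t^{-1})$.

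\emph{Main obstacle.} The one genuinely new point is to carry this increment rate through the non-commuting spectral maps $A \mapsto A^{\pm 1/2}$. The diagonal Lipschitz argument no longer applies directly. I would use the operator-Lipschitz property of the square root on $\{A \succeq \epsilon I\}$. For $A, B \succeq \epsilon I$, the identity $A - B = A^{1/2}(A^{1/2} - B^{1/2}) + (A^{1/2} - B^{1/2})B^{1/2}$ exhibits $X = A^{1/2} - B^{1/2}$ as the solution of a Sylvester equation with coefficients $\succeq \epsilon^{1/2} I$. This gives $\|X\|_{\mathrm{op}} \le \tfrac12 \epsilon^{-1/2}\|A - B\|_{\mathrm{op}}$. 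Equivalently, one can use the integral representation $A^{1/2} = \pi^{-1}\int_0^\infty s^{-1/2} A (A+sI)^{-1}\,ds$. For the inverse root, use $A^{-1/2} - B^{-1/2} = A^{-1/2}(B^{1/2} - A^{1/2})B^{-1/2}$, which yields the constant $\tfrac12 \epsilon^{-3/2}$.

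This gives $\|P_{t+1} - P_t\|_{\mathrm{op}} = O(t^{-1})$. Since $M_t = H^{-1}P_t^{-1} = H^{-1}\hat C_{t-1}^{1/2}$, it also gives $\|M_t - M_{t-1}\|_{\mathrm{op}} \le \|H^{-1}\|_{\mathrm{op}}\,\|\hat C_{t-1}^{1/2} - \hat C_{t-2}^{1/2}\|_{\mathrm{op}} = O(t^{-1})$, with $\sup_t\|M_t\|_{\mathrm{op}} \le \|H^{-1}\|_{\mathrm{op}}\,c_G^{1/2}$. This is Assumption~\ref{ass:stab} with $\beta = 1 > (\alpha+1)/2$, together with~\eqref{eq:appA-standing}.

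With these verified, Parts~(i) and~(ii) of Theorem~\ref{thm:main} follow as before. The limit is $H^{-1}SH^{-1}$ because Theorem~\ref{thm:projection} is preconditioner-independent.
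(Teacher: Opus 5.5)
Your proposal is correct and follows the paper's proof essentially step for step: the same Loewner bounds on $C_t$ and $\hat C_t$ (via $\kappa^v_t \in [c_2,1]$), the same scalar bias-correction increment decomposition, and the same Sylvester-identity operator-Lipschitz bound $\|A^{1/2}-B^{1/2}\|_{\mathrm{op}} \le \|A-B\|_{\mathrm{op}}/(2\sqrt\epsilon)$, with an extra factor $\epsilon^{-1}$ for $A^{-1/2}$. The proof then concludes, as the paper does, by noting that the augmented-state analysis and the projection identity never use diagonality of $P_t$; the integral representation you mention is only an equivalent alternative for the Lipschitz step.
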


\begin{proof}
Two facts combine; only the second is specific to this paper.

\emph{Stabilization.}  Bounded
gradients give $g_t g_t^\top + \epsilon I \in [\epsilon I,
(G^2+\epsilon)I]$, so by induction $\epsilon I \preceq C_t \preceq
(G^2+\epsilon) I$ a.s., with one-step variation
$\|C_t - C_{t-1}\|_{\mathrm{op}} = \rho^v_t\,\|g_t g_t^\top + \epsilon I
- C_{t-1}\|_{\mathrm{op}} = O(t^{-1})$; the scalar bias factor
$\kappa^v_t \in [c_2, 1]$ keeps $\hat C_t = C_t/\kappa^v_t \in
[\epsilon I, c_G I]$ with $c_G := (G^2+\epsilon)/c_2$, and contributes
only a subdominant $O(t^{-c_2-1})$ term to the increment, exactly as in
Proposition~\ref{prop:sa-adam-stab}.  The square-root and its inverse are
\emph{operator-Lipschitz} on $[\epsilon I, c_G I]$---which the matrix case
requires in place of the scalar Lipschitz bound: for $A, B \succeq
\epsilon I$ (so $A^{1/2}, B^{1/2} \succeq \sqrt\epsilon\,I$), the Sylvester
identity $A - B = A^{1/2}(A^{1/2} - B^{1/2}) + (A^{1/2} - B^{1/2})B^{1/2}$,
together with the coercivity of the map $X \mapsto A^{1/2}X + XB^{1/2}$
(whose smallest singular value is $\ge 2\sqrt\epsilon$), gives
$\|A^{1/2} - B^{1/2}\|_{\mathrm{op}} \le \|A - B\|_{\mathrm{op}}/(2\sqrt\epsilon)$, whence
$\|A^{-1/2} - B^{-1/2}\|_{\mathrm{op}} \le \epsilon^{-1}\|A^{1/2} -
B^{1/2}\|_{\mathrm{op}} = O(\|A-B\|_{\mathrm{op}})$
(cf.\ \citet{hornjohnson2013}).  Applied to $\hat C_{t-1}$, this
carries the $O(t^{-1})$ rate to $\|P_t - P_{t-1}\|_{\mathrm{op}}$ and to
$\|M_t - M_{t-1}\|_{\mathrm{op}}$, with $\sup_t\|M_t\|_{\mathrm{op}} \le
\|H^{-1}\|_{\mathrm{op}}\,c_G^{1/2} < \infty$, for
$M_t = (P_t H)^{-1} = H^{-1}\hat C_{t-1}^{1/2}$.  This gives
Assumption~\ref{ass:stab} ($\beta = 1$) and the standing
conditions~\eqref{eq:appA-standing} (the spectral-map argument as in
\citet{anhuo2026}).

\emph{CLT extension (immediate from our framework).}  The
augmented-state analysis of
Sections~\ref{sec:augmented}--\ref{sec:asymptotics}---the spectrum of
$L_t$, the symmetrizer, the block inverse (Lemma~\ref{lem:Linv}), and the
projection identity (Theorem~\ref{thm:projection})---uses only that
$P_t$ is symmetric positive-definite and obeys the standing conditions;
diagonality is never invoked.  Hence the \emph{proof} of
Theorem~\ref{thm:main} applies verbatim once the predictable $P_t = \hat C_{t-1}^{-1/2}$
is substituted, giving the same sandwich limit $H^{-1}SH^{-1}$
(preconditioner-independent, by Theorem~\ref{thm:projection}).
\end{proof}

\section{Simulation Study}
\label{sec:simulation}

We report three numerical experiments validating the paper's main
results, with diagnostics in the spirit of the numerical study of
\citet{anhuo2026}.  Section~\ref{sec:exp-projection} verifies the
projection identity and momentum-invisibility distributionally, in the
\emph{same} streaming-regression environment used by that paper but with
SA-Adam momentum added; Section~\ref{sec:exp-gamma} establishes the
necessity of the sub-linear momentum exponent $\gamma<1$ by exact
evaluation of the scalar limiting variance; and
Section~\ref{sec:exp-coverage} confirms, on a semi-synthetic design with
real covariates, that the resulting one-pass Wald confidence statements
attain nominal coverage, with SA-Adam statistically indistinguishable from
plain SGD.  Apart from Section~\ref{sec:exp-gamma}'s comparisons outside $(\alpha,1)$, each
experiment uses a schedule in the admissible range
$\tfrac12<\alpha<\gamma<1$ suited to its diagnostic.  Covariance-based
diagnostics use the \emph{oracle} sandwich $H^{-1}SH^{-1}$, isolating the
distributional claim from plug-in covariance-estimation error.

\subsection{Projection Identity and Momentum-Invisibility}
\label{sec:exp-projection}

\begin{figure}[tb]
\centering
\includegraphics[width=\textwidth]{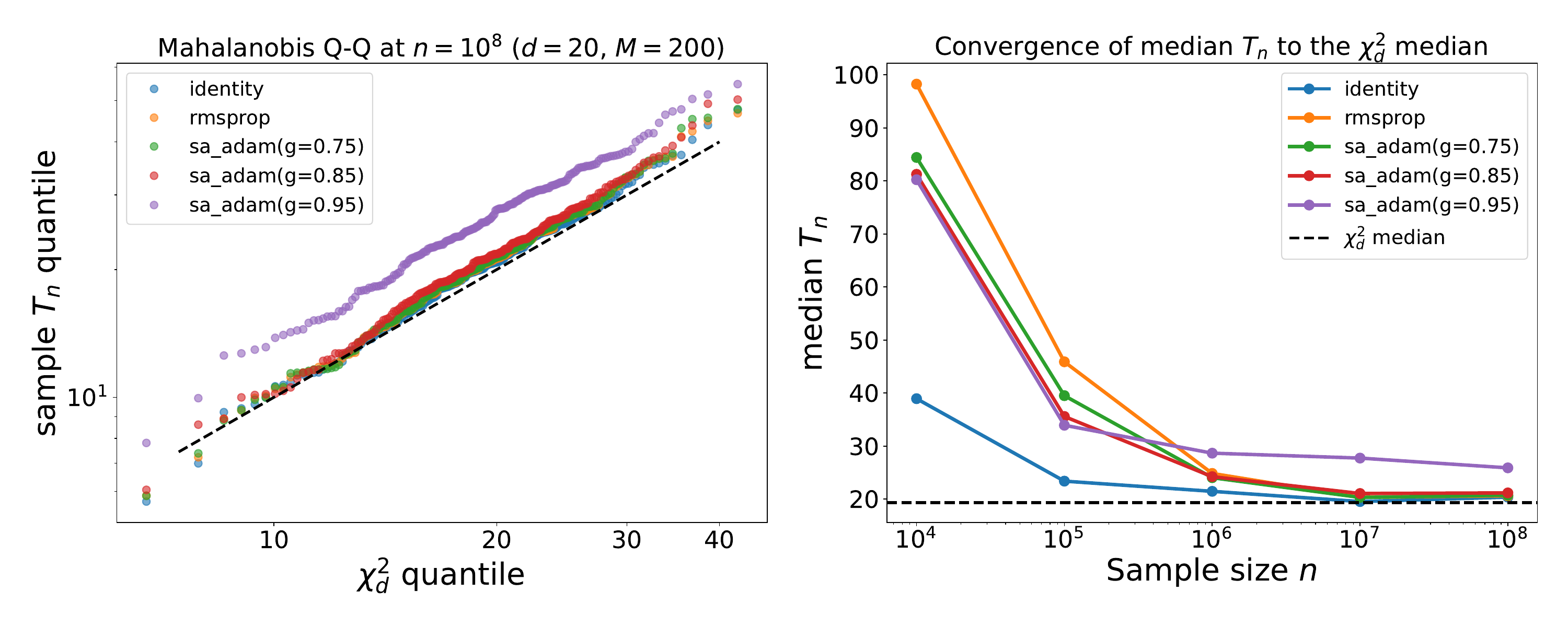}
\caption{Projection identity and momentum-invisibility (streaming Toeplitz
regression, $S\neq H$; $M=200$).  Left: Mahalanobis Q-Q against $\chi^2_d$ at
$n=10^8$ for plain SGD, SA-RMSProp, and SA-Adam across the $\gamma$-sweep;
points on $y=x$ indicate ideal distributional agreement.  Right: the median
Mahalanobis $T_n$ versus $n$ converges to the $\chi^2_d$ median (dashed) for
every arm---plain SGD, SA-RMSProp, and SA-Adam at $\gamma\in\{0.75,0.85\}$
nearly coincide, illustrating momentum-invisibility, while the near-boundary
$\gamma=0.95$ approaches more slowly.}
\label{fig:exp-projection}
\end{figure}

We first verify the projection identity and momentum-invisibility
distributionally, in the streaming linear-regression design of
\citet{anhuo2026}: Gaussian covariates $a_t\sim\mathcal N(0,H)$ on
the Toeplitz Hessian $H_{jk}=0.4^{|j-k|}$ ($d=20$, $\kappa(H)\approx5.3$),
responses $y_t=a_t^\top x^*+\varepsilon_t$ with heteroskedastic noise
$\varepsilon_t\mid a_t\sim\mathcal N(0,\sigma_0^2+\sigma_1^2(a_t^\top u)^2)$
($\sigma_0=0.35$, $\sigma_1=0.8$), so that $S\neq H$.  Each step consumes one
fresh sample; the step size is $\eta_t=0.2\,t^{-0.7}$ and the second-moment
gain is $\rho^v_t=1/(t+1)$ with stabilization ridge $0.5$.  We compare plain
Polyak--Ruppert SGD, SA-RMSProp (preconditioner, no momentum), and SA-Adam
across a sweep $\gamma\in(\alpha,1)$, all driven by a \emph{shared} gradient
stream.  The diagnostic is the Mahalanobis statistic
$T_n=n\,(\overline x_n-x^*)^\top(H^{-1}SH^{-1})^{-1}(\overline x_n-x^*)$, predicted by
Theorems~\ref{thm:projection} and~\ref{thm:main} to converge to $\chi^2_d$;
we report its median, its one-sample Kolmogorov--Smirnov distance $D_M$ from
$\chi^2_d$ relative to the Monte Carlo (MC) level $1.36/\sqrt M$, and---measuring
momentum-invisibility directly---the paired quantity
$n\,\|\overline x_n^{\mathrm{adam}}-\overline x_n^{\mathrm{sgd}}\|^2_{(H^{-1}SH^{-1})^{-1}}$.

This unbounded-covariate design is large-$n$: as in
\citet{anhuo2026}, the methods reach the $\chi^2_d$ regime only
near $n=10^7$--$10^8$, so Figure~\ref{fig:exp-projection} runs to $n=10^8$ over
$M=200$ replications.  There the median $T_n$ has descended to the
$\chi^2_{20}$ regime (median $19.3$) for plain SGD ($20.4$), SA-RMSProp
($20.6$), and SA-Adam at $\gamma=0.75$ ($20.7$) and $\gamma=0.85$ ($21.2$),
with $D_M$ near or modestly above the reference $1.36/\sqrt{200}=0.096$ (ratios
$0.86,0.99,1.04,1.44$) and normalized MSE (NMSE) in $[1.04,1.09]$; the momentum arms are
nearly indistinguishable from the no-momentum baseline.  Invisibility is
confirmed directly: the paired statistic is $0.15$--$0.51$ for
$\gamma\in\{0.75,0.85\}$, about $1\%$ of the sandwich trace $43.1$.  The
near-boundary $\gamma=0.95$ stays elevated ($T_n=25.9$ at $n=10^8$), exactly as
the necessity analysis of Section~\ref{sec:exp-gamma} predicts---the limit is
fixed for every $\gamma\in(\alpha,1)$, but the rate of approach slows as
$\gamma\to1$.  Finally, the $P$-independence at the heart of the identity is
checked exactly: over $10^3$ random triples $(P,\rho,\tau)$ with $P$ not commuting with $H$, the
iterate block of $\Sigma_z=L^{-1}\Sigma_w L^{-\top}$ matches $H^{-1}SH^{-1}$ to
relative error $1.6\times10^{-15}$.

\subsection{Necessity of Sub-Linear Momentum}
\label{sec:exp-gamma}

\begin{figure}[tb]
\centering
\includegraphics[width=\textwidth]{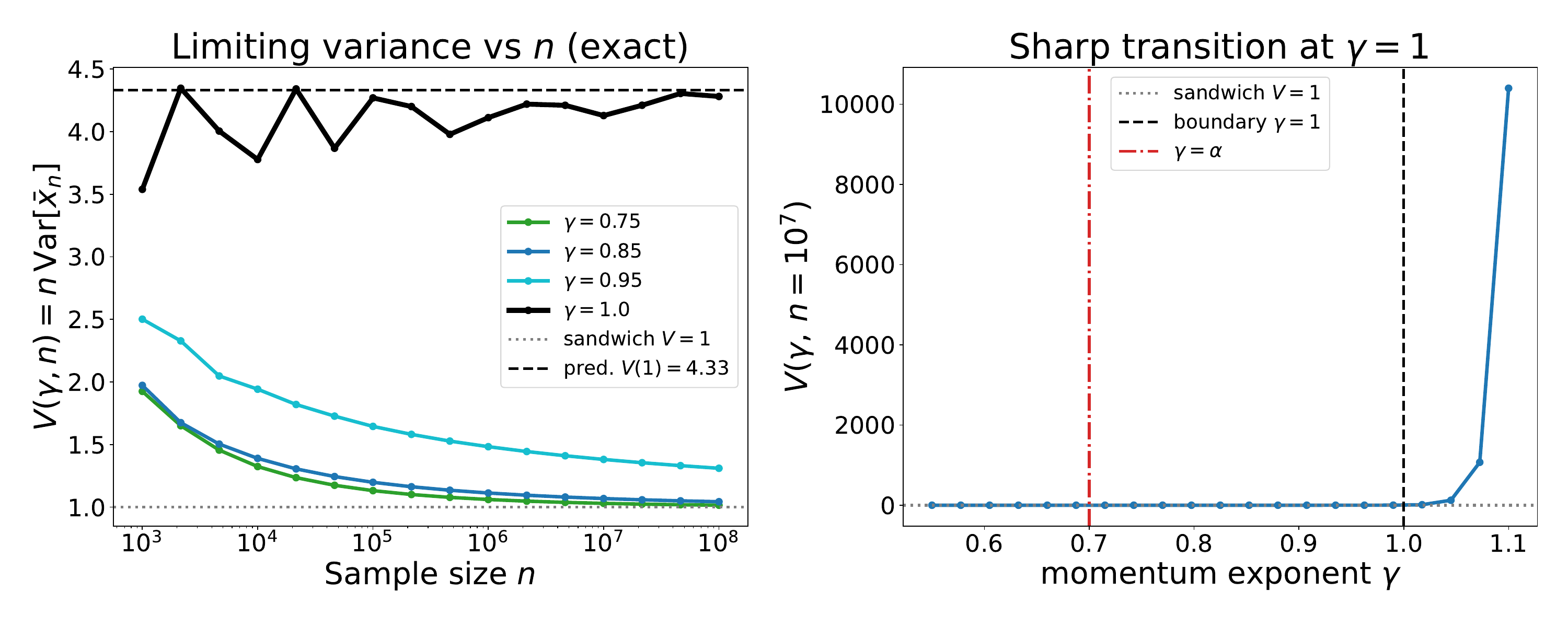}
\caption{Necessity of $\gamma<1$ (exact scalar evaluation).  Left:
$V(\gamma,n)=n\,\mathrm{Var}[\overline x_n]$ versus $n$ for several $\gamma$;
curves with $\gamma<1$ descend toward the sandwich $V=1$ (more slowly as
$\gamma\uparrow1$), while $\gamma=1$ plateaus at the predicted inflated
limit $1+1/(2c_1-1-\alpha)$.  Right: $V(\gamma,10^7)$, showing the
sharp transition at $\gamma=1$.}
\label{fig:exp-gamma}
\end{figure}

The sub-linear exponent $\gamma<1$ is essential (Lemma~\ref{lem:Rnz-bound},
Remark~\ref{rem:Rnz-gamma-necessary}): at the canonical value $\gamma=1$ the
endpoint buffer leaves a persistent $\Theta(n^{-1/2})$ residual that
inflates the iterate-marginal covariance above the sandwich.  We illustrate
this by an exact, deterministic evaluation, in the scalar model
$H=\sigma=1$, of $V(\gamma,n):=n\,\mathrm{Var}[\overline x_n]$ and its limit
$V(\gamma):=\lim_n V(\gamma,n)$, obtained by propagating the closed
$3\times3$ covariance recursion of the augmented state
$(\Delta_t,m_{t-1},\sum_{s\le t}\Delta_s)$ (no MC error).

For $\gamma\in(\alpha,1)$ the variance converges to the sandwich
$V(\gamma)=1$ at the predicted rate $n^{\gamma-1}$.  Over
the sweep $\gamma\in\{0.75,0.85,0.95\}$ ($\alpha=0.7$, $c_1=1$), the well-inside
exponents are essentially on the sandwich ($V(0.75,10^8)=1.02$, $V(0.85,10^8)=1.04$),
while the near-boundary $V(0.95,10^8)=1.31$ is still visibly descending, since the
rate $n^{\gamma-1}$ degrades as $\gamma\uparrow1$.  At the boundary $\gamma=1$
instead $V(1)=1+1/(2c_1-1-\alpha)=4.33$ (with $V(1,10^8)=4.28$),
and for $\gamma>1$ it diverges.  Figure~\ref{fig:exp-gamma} shows the
resulting sharp transition at $\gamma=1$, matching the closed form of
Remark~\ref{rem:Rnz-gamma-necessary}.

\subsection{Semi-Synthetic Coverage with Real Covariates}
\label{sec:exp-coverage}

\begin{figure}[tb]
\centering
\includegraphics[width=\textwidth]{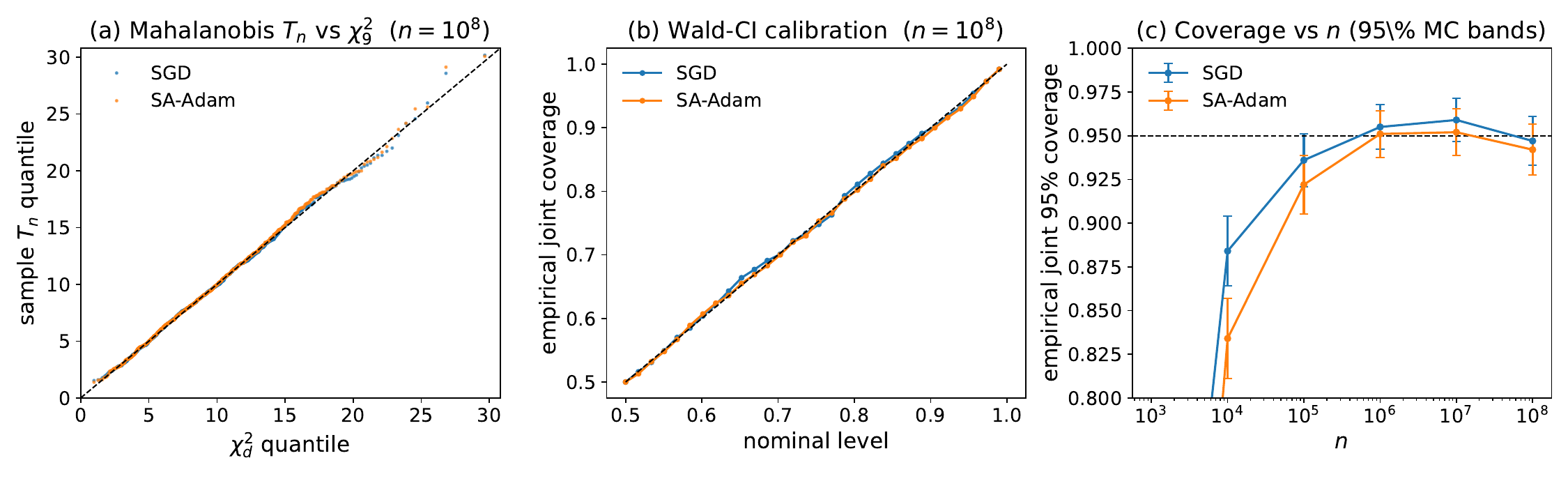}
\caption{Semi-synthetic coverage of averaged SA-Adam vs.\ averaged SGD (real
\texttt{diabetes} covariates, known heteroskedastic response; $d=9$, $M=10^3$,
oracle $\Sigma$).  (a)~The Mahalanobis statistic $T_n$ at $n=10^8$ aligns with the
$\chi^2_d$ line for both methods, whose curves nearly coincide.  (b)~Empirical joint coverage
tracks the nominal level.  (c)~Joint $95\%$ coverage rises to nominal by
$n\approx10^6$ and then fluctuates within MC error (bars: $95\%$ MC)
about it, indistinguishably for SA-Adam and SGD---exhibiting both the asymptotic
validity and the invisibility of preconditioning and momentum to the
averaged-iterate covariance.}
\label{fig:exp-coverage}
\end{figure}

To confirm that the identity is not an artifact of the Gaussian design above
and to exhibit the inferential payoff directly, we run a \emph{semi-synthetic}
coverage check on real feature geometry.  A coverage claim requires a known
ground truth, so we retain real covariates but simulate the response: the
standardized \texttt{diabetes} design (\citealp{efron2004least}; \texttt{scikit-learn},
\citealp{pedregosa2011scikit}; $442$ rows; the
two near-collinear serum features dropped and an intercept added, so $\kappa(H)\approx7.5$ and $d=9$),
with the empirical row distribution as the streaming population and
heteroskedastic responses $y=a^\top\theta^\star+\varepsilon$,
$\operatorname{sd}(\varepsilon\mid a)=0.5+0.8\,|a^\top w|$, so that
$\theta^\star,H,S,\Sigma=H^{-1}SH^{-1}$ are all known exactly with $S\neq H$.
Over $M=10^3$ independent streams of length $n=10^8$ we compare averaged
SA-Adam ($\alpha=0.6$, $\gamma=0.75$) with averaged SGD, scoring the
oracle-$\Sigma$ Mahalanobis statistic $T_n$ and the Wald confidence
ellipsoid/intervals.  At $n=10^8$ both methods are essentially unbiased
(empirical bias norm $\approx1.3\times10^{-5}$), the scaled
empirical covariance matches $\Sigma$ to $\sim8\%$ (the sampling error of a
$9\times9$ covariance from $10^3$ draws), and the median $T_n$ is $8.34$ for
both---matching the $\chi^2_9$ median.  Joint $95\%$ coverage is $0.942$
(SA-Adam) and $0.947$ (SGD) and mean marginal coverage is $0.951$ and $0.950$, all
within MC error (standard error, $\mathrm{SE}\approx0.007$) of nominal; the two
methods are statistically indistinguishable in the asymptotic regime
(Figure~\ref{fig:exp-coverage}), exactly as the projection identity predicts.

\section{Conclusion}
\label{sec:discussion}

We have extended the asymptotic-normality theory for averaged adaptive
stochastic gradient descent to the Adam family, which combines adaptive
preconditioning with momentum.  Because the pathwise decomposition of
\citet{anhuo2026} provably does not extend to momentum updates
(Proposition~\ref{prop:tautology}), we developed an augmented-state
framework resting on three results: the positive-stable spectrum of the
joint drift $L_t$ (Lemma~\ref{lem:hurwitz}); a non-autonomous
Polyak--Ruppert central limit theorem (Theorem~\ref{thm:augmented-clt}),
built on the linear-SA framework of \citet{mou2020linear} and the
stabilization condition of \citet{anhuo2026}; and the iterate-marginal
projection identity $\Sigma_z^{(1,1)} = H^{-1}SH^{-1}$
(Theorem~\ref{thm:projection}).  The projection identity is the central algebraic
contribution, generalizing the asymptotic-equivalence finding of
\citet{liu2023acceleration} to adaptive preconditioning and time-varying
momentum simultaneously.

Applied to SA-Adam---the stochastic-approximation reparametrization with
$\beta_{1,t} = 1 - c_1/t^\gamma$ ($\gamma\in(\alpha,1)$) and
$\beta_{2,t} = 1 - c_2/t$---these results yield the standard
Polyak--Ruppert efficiency (Theorem~\ref{thm:main}): the adaptive
preconditioning and momentum designed to accelerate optimization leave the
averaged-iterate limit untouched, so SA-Adam is a principled engine for
one-pass online inference, providing the limiting law needed for streaming
Wald confidence statements at no cost in asymptotic variance.  The constant-EMA deployed form of
Adam, by contrast, is not expected to satisfy the rate-only stabilization condition on $P_t$
underpinning our theory; whether it nonetheless attains the same
asymptotic variance remains open, as do sharper rate conditions, richer
state augmentations (e.g.\ the second-moment buffer $v_t$), and a
structural explanation of the projection identity~\eqref{eq:projection}.

\appendix

\renewcommand{\theHsection}{appendix.\arabic{section}}
\renewcommand{\theHsubsection}{appendix.\arabic{section}.\arabic{subsection}}

\makeatletter
\renewcommand\@seccntformat[1]{%
  \csname the#1\endcsname\ifcsname appsecdot@#1\endcsname.\fi\quad}
\@namedef{appsecdot@section}{}
\makeatother

\phantomsection
\section*{Appendix}

\section{Lyapunov Construction for Proposition~\ref{prop:augmented-mse}}
\label{app:lyapunov}

We complete the proof of Proposition~\ref{prop:augmented-mse} by
exhibiting an explicit Lyapunov matrix for the joint drift $L_t$.  In
place of the integral solution of a continuous Lyapunov equation we use
a closed-form \emph{symmetrizer} that is jointly block-diagonalized
with $L_t$ under the scalar reduction of Lemma~\ref{lem:spectrum} and
obeys two exact algebraic identities.  Its advantage is not an
improvement in condition number---that of $Q_t$ in fact grows like
$\tau_t^{-1}$---but that it is matched to the \emph{discrete} dynamics:
the identities yield the exact one-step cancellation
$(I-\eta_tL_t)^\top Q_t(I-\eta_tL_t) = (1-\rho_t)Q_t$
(Lemma~\ref{lem:Qsym-decrease}), and its $t$-independent $O(1)$ lower
eigenvalue bound is enough to convert a Lyapunov bound into an ordinary
Euclidean MSE bound.

\paragraph{Relation to prior work.}
The augmented-state route to an iterate-MSE bound for momentum methods
is not new; our contribution is its extension to a data-driven,
non-convergent preconditioner.  The closest precedent is the stochastic
heavy-ball analysis of \citet{gadat2018stochastic}, who control the
position--velocity pair by a Lyapunov energy combining the
objective with a memory-rescaled velocity term \emph{and} a
gradient--velocity cross-term, and obtain non-asymptotic $L^2$ rates for
both constant and decaying memory; their best-rate gain threshold is analogous to our
two-time-scale separation requirement $\gamma > \alpha$ (with the $c_1$-dependent threshold trivial
under $\gamma < 1$).  Our symmetrizer $Q_t$, the diagonal congruence
$T_t = \mathrm{diag}(P_t^{1/2}, P_t^{-1/2})$ with its
$\tau_t^{-1}$-rescaled buffer block, and
Proposition~\ref{prop:augmented-mse} are the preconditioned,
time-varying counterparts; the off-diagonal block
$\tfrac12(\eta_t H P_t - I)$ of $Q_t$ plays the role of their
gradient--velocity coupling.  \citet{liu2023acceleration} likewise
control the averaged-SGDM second moment through a $2d$-dimensional matrix
recursion; the linear two-time-scale second-moment analyses
\citep{konda2004actor, kaledin2020finite}, and
the nonlinear two-time-scale averaging of \citet{mokkadem2006convergence},
are the closest stochastic-approximation counterparts.  None of these admits
an adaptive, non-convergent preconditioner $P_t$; the symmetrizer~\eqref{eq:Qsym}, whose
\emph{exact} identities~\eqref{eq:Qsym-id} and one-step
congruence~\eqref{eq:Qsym-onestep} hold for arbitrary positive-definite
$P_t$ with no commutativity hypothesis, is what carries the argument
through to the SA-Adam setting.

Throughout write $P = P_t$, $\rho = \rho_t$, $\eta = \eta_t$,
$\tau = \tau_t = \rho_t/\eta_t$, and let $\mu_1^{(t)}, \ldots, \mu_d^{(t)}$
be the eigenvalues of $P_t H$, equivalently of $S_t := P_t^{1/2} H
P_t^{1/2}$.

We use the following \emph{standing ellipticity and stabilization}
conditions on the preconditioner: there are $t$-independent constants
$0 < p_- \le p_+ < \infty$, $0 < \mu_- \le \mu_+ < \infty$, and
$C_P < \infty$ such that, almost surely for all $t$,
\begin{equation}
\label{eq:appA-standing}
  p_- I \preceq P_t \preceq p_+ I, \qquad
  \mu_- I \preceq P_t^{1/2} H P_t^{1/2} \preceq \mu_+ I, \qquad
  \|P_{t+1} - P_t\|_{\mathrm{op}} \le C_P\, t^{-1}.
\end{equation}
For the SA-Adam preconditioner $P_t = \mathrm{Diag}(\hat v_{t-1})^{-1/2}$
all three conditions in~\eqref{eq:appA-standing} are verified in
Proposition~\ref{prop:sa-adam-stab}(ii): bounded gradients keep
$\hat v_{t-1}$ in a fixed interval $[\epsilon, c_G]$ with $O(t^{-1})$
increments (the bias correction contributing only an $O(t^{-c_2-1})$
subdominant term), which the Lipschitz diagonal maps
$x \mapsto x^{\pm1/2}$ propagate to the two-sided bounds and the
$O(t^{-1})$ increment of $P_t$; the $\hat m_t$ vs $m_t$ distinction is
handled separately as an effective-step-size perturbation in
Section~\ref{sec:bias-correction}.
More generally, for any preconditioner satisfying
Assumption~\ref{ass:stab} with $\beta = 1$, the matrix-inverse identity
$P_{t+1} - P_t = -P_{t+1}\,H\,(M_{t+1} - M_t)\,P_t$ combined with the
upper bound $\|P_t\| \le p_+$ yields
$\|P_{t+1}-P_t\|_{\mathrm{op}} \le p_+^2 \|H\|\,\|M_{t+1} - M_t\|_{\mathrm{op}}
= O(t^{-1})$.  Note that
Assumption~\ref{ass:stab} by itself controls only $M_t = (P_t H)^{-1}$
(an upper bound on $M_t$, hence a \emph{lower} spectral bound on
$P_t H$); the uniform \emph{upper} bound on $P_t$
in~\eqref{eq:appA-standing} is additional input that the SA-Adam
construction supplies.  For a general preconditioner, predictability
($P_t$ being $\mathcal F_{t-1}$-measurable, as the one-step conditioning
below requires) and~\eqref{eq:appA-standing} should be assumed directly;
the increment rate $O(t^{-1})$ is stronger than the CLT-remainder
threshold $\beta > (\alpha+1)/2$ and is what the Lyapunov time-variation
step below requires.

\subsection{The Symmetrizer and Its Exact Identities}

We define the symmetrizer and establish the two exact identities and the
two-sided bounds on which the Lyapunov argument rests.

\begin{definition}[Symmetrizer]
\label{def:Qsym}
For each $t$ define the symmetric matrix
\begin{equation}
\label{eq:Qsym}
  Q_t := \begin{pmatrix}
    H & \tfrac12\bigl(\eta_t H P_t - I\bigr) \\[3pt]
    \tfrac12\bigl(\eta_t P_t H - I\bigr) & \dfrac{1 - \rho_t}{\tau_t}\,P_t
  \end{pmatrix}.
\end{equation}
\end{definition}

Symmetry holds because $(\eta H P - I)^\top = \eta P H - I$ and
$H, P_t$ are symmetric, and does \emph{not} require $H P_t = P_t H$.
Positive definiteness is not asserted at this point: the uniform
anisotropic lower bound---in particular a strictly positive,
$t$-independent lower eigenvalue bound $\lambda_{\min}(Q_t) \ge c_0 > 0$---is
established only for all sufficiently large $t$ in
Lemma~\ref{lem:Qsym-bounds}, and the proof of
Proposition~\ref{prop:augmented-mse} starts the Lyapunov recursion there,
the finitely many earlier indices only enlarging the bootstrap constant
$K$.

\begin{lemma}[Scalar reduction and two exact identities]
\label{lem:Qsym-id}
Let $T_t := \mathrm{diag}(P_t^{1/2}, P_t^{-1/2})$.  Then
$\bar L_t := T_t^{-1} L_t T_t$ and $\bar Q_t := T_t^\top Q_t T_t$ are
\[
  \bar L_t = \begin{pmatrix} \rho S_t & (1-\rho) I \\ -\tau S_t & \tau I
  \end{pmatrix},
  \qquad
  \bar Q_t = \begin{pmatrix} S_t & \tfrac12(\eta S_t - I) \\[2pt]
    \tfrac12(\eta S_t - I) & \tfrac{1-\rho}{\tau} I \end{pmatrix},
\]
both functions of $S_t$ alone; diagonalizing $S_t = U\,\mathrm{diag}
(\mu_i^{(t)})\,U^\top$ and applying $\mathrm{diag}(U, U)$ reduces them to
the $d$ scalar pairs
\[
  \mathrm L_i = \begin{pmatrix} \rho\mu_i & 1-\rho \\ -\tau\mu_i & \tau
  \end{pmatrix},
  \qquad
  \mathrm q_i = \begin{pmatrix} \mu_i & \tfrac{\eta\mu_i - 1}{2} \\[2pt]
    \tfrac{\eta\mu_i - 1}{2} & \tfrac{1-\rho}{\tau} \end{pmatrix},
  \qquad \eta = \tfrac{\rho}{\tau}.
\]
Each pair obeys the exact identities
\begin{equation}
\label{eq:Qsym-id}
  \mathrm L_i^\top \mathrm q_i + \mathrm q_i \mathrm L_i
    = (\tau + \rho\mu_i)\,\mathrm q_i,
  \qquad
  \mathrm L_i^\top \mathrm q_i \mathrm L_i = \tau\mu_i\,\mathrm q_i .
\end{equation}
Consequently, in the original coordinates, whenever $Q_t \succeq 0$---in
particular for all sufficiently large $t$, by
Lemma~\ref{lem:Qsym-bounds}---
\begin{equation}
\label{eq:Qsym-matrix}
  L_t^\top Q_t + Q_t L_t \succeq \tau_t\, Q_t,
  \qquad
  L_t^\top Q_t L_t \preceq \mu_+\,\tau_t\, Q_t .
\end{equation}
\end{lemma}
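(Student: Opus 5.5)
The proof is a change of coordinates followed by a finite $2\times2$ computation. I would carry it out in three steps: reduce to scalar pairs by congruence, verify the two scalar identities directly, and lift them back to matrix inequalities.

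\emph{Step 1: the congruence.} With $T_t=\mathrm{diag}(P^{1/2},P^{-1/2})$, compute $T_t^{-1}L_tT_t$ blockwise from~\eqref{eq:Lt}:
\[
  P^{-1/2}(\rho PH)P^{1/2}=\rho S_t,\qquad
  P^{-1/2}\bigl((1-\rho)P\bigr)P^{-1/2}=(1-\rho)I,\qquad
  P^{1/2}(-\tau H)P^{1/2}=-\tau S_t,
\]
and the lower-right block stays $\tau I$. For $T_t^\top Q_tT_t$, use~\eqref{eq:Qsym}:
\[
  P^{1/2}HP^{1/2}=S_t,\qquad
  P^{1/2}\cdot\tfrac12(\eta HP-I)\cdot P^{-1/2}=\tfrac12(\eta S_t-I),\qquad
  P^{-1/2}\cdot\tfrac{1-\rho}{\tau}P\cdot P^{-1/2}=\tfrac{1-\rho}{\tau}I.
\]
Both reduced matrices are polynomials in the single symmetric matrix $S_t$, so no commutativity of $P$ and $H$ is needed.

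Conjugating by the orthogonal $\mathrm{diag}(U,U)$ and then applying a permutation gives a block-diagonal form with $2\times2$ blocks. Write $W$ for the combined orthogonal matrix. Then
\[
  L_t=T_tW\bigl(\oplus_i\mathrm L_i\bigr)W^\top T_t^{-1},\qquad
  Q_t=T_t^{-\top}W\bigl(\oplus_i\mathrm q_i\bigr)W^\top T_t^{-1}.
\]

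\emph{Step 2: the scalar identities.} This is the only real computation. I would verify~\eqref{eq:Qsym-id} entrywise, checking three entries of each symmetric $2\times2$ product and substituting $\eta\tau=\rho$ throughout. For example, the $(1,1)$ entry of $\mathrm L_i^\top\mathrm q_i+\mathrm q_i\mathrm L_i$ is
\[
  2\rho\mu^2-\tau\mu(\eta\mu-1)=\rho\mu^2+\tau\mu=(\tau+\rho\mu)\mu .
\]
The other entries follow the same pattern. I expect the bookkeeping in the second identity, $\mathrm L^\top\mathrm q\mathrm L=\tau\mu\,\mathrm q$, to be the main obstacle, because its entries are cubic in the parameters.

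To shorten that check, I would first observe that $\mathrm{tr}\,\mathrm L_i=\tau+\rho\mu_i$ and $\det\mathrm L_i=\tau\mu_i$, matching the quadratic~\eqref{eq:char-poly}. Both identities then say that $\mathrm q_i$ intertwines $\mathrm L_i$ with its Cayley--Hamilton data. Once the first identity and $\det\mathrm q_i\neq0$ are known, the second follows from the first by multiplying by $\mathrm L_i$ and using $\mathrm L_i^2=(\mathrm{tr}\,\mathrm L_i)\mathrm L_i-(\det\mathrm L_i)I$. I would still confirm one entry of the second identity directly as a sanity check.

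\emph{Step 3: the matrix inequalities.} The hypothesis $Q_t\succeq0$ is preserved under congruence, so it gives $\mathrm q_i\succeq0$ for every $i$. By Step 1,
\[
  L_t^\top Q_t+Q_tL_t=T_t^{-\top}W\bigl(\oplus_i(\tau+\rho\mu_i)\mathrm q_i\bigr)W^\top T_t^{-1}.
\]
Since $\rho\mu_i\ge0$ and $\mathrm q_i\succeq0$, each block dominates $\tau\,\mathrm q_i$, which gives the first inequality in~\eqref{eq:Qsym-matrix}. In the same way, $L_t^\top Q_tL_t$ equals the congruence of $\oplus_i\tau\mu_i\mathrm q_i$. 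Each block satisfies $\tau\mu_i\mathrm q_i\preceq\tau\mu_+\mathrm q_i$ because $\mu_i\le\mu_+$, which gives the second inequality.
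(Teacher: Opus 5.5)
Your proposal is correct and follows essentially the same route as the paper: congruence by $T_t$ and $\mathrm{diag}(U,U)$, entrywise verification of the first identity, Cayley--Hamilton for the second, and congruence back to get the Loewner inequalities from $\mathrm q_i \succeq 0$. One small correction: the Cayley--Hamilton step does not need $\det\mathrm q_i\neq 0$, since right-multiplying the first identity by $\mathrm L_i$ and substituting $\mathrm L_i^2=(\mathrm{tr}\,\mathrm L_i)\mathrm L_i-(\det\mathrm L_i)I$ gives $\mathrm L_i^\top\mathrm q_i\mathrm L_i=(\det\mathrm L_i)\,\mathrm q_i$ outright. Dropping that condition matters because the paper uses both identities for every $t$, not only where $\mathrm q_i$ is nonsingular.
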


\begin{proof}
The two congruences are immediate and use no commutativity between
$P_t$ and $H$: $T_t^{-1} L_t T_t$ replaces the
blocks $\rho P H, (1-\rho) P, -\tau H$ by $\rho S_t, (1-\rho) I,
-\tau S_t$ (e.g.\ $P^{-1/2}(\rho P H) P^{1/2} = \rho P^{1/2} H P^{1/2}
= \rho S_t$, and it leaves $\tau I$), and $T_t^\top Q_t T_t$ replaces
$H, \eta H P, \tfrac{1-\rho}{\tau} P$ by $S_t, \eta S_t,
\tfrac{1-\rho}{\tau} I$.  Both depend on $P_t$ only through $S_t$, so
$\mathrm{diag}(U, U)$ block-diagonalizes them into the $\mathrm L_i,
\mathrm q_i$ above.  For~\eqref{eq:Qsym-id}, write $\mathrm L_i =
\bigl(\begin{smallmatrix} a & b \\ c & d \end{smallmatrix}\bigr)$ with
$a = \rho\mu_i$, $b = 1-\rho$, $c = -\tau\mu_i$, $d = \tau$, and
$\mathrm q_i = \bigl(\begin{smallmatrix} p & r \\ r & s
\end{smallmatrix}\bigr)$ with $p = \mu_i$, $r = \tfrac{\eta\mu_i - 1}{2}$,
$s = \tfrac{1-\rho}{\tau}$; throughout we use $\eta\tau = \rho$, which
gives $\mathrm{tr}\,\mathrm L_i = a + d = \tau + \rho\mu_i$ and
$\det\mathrm L_i = ad - bc = \rho\mu_i\tau + (1-\rho)\tau\mu_i = \tau\mu_i$.
Direct multiplication gives the symmetric matrix
\[
  \mathrm L_i^\top \mathrm q_i + \mathrm q_i \mathrm L_i
  = \begin{pmatrix}
      2(a p + c r) & (a+d) r + (c s + p b) \\[2pt]
      (a+d) r + (c s + p b) & 2(b r + d s)
    \end{pmatrix},
\]
and we check entrywise that it equals $(a+d)\,\mathrm q_i$:
\begin{itemize}
\item[(i)] off-diagonal: $c s + p b
  = -\tau\mu_i\cdot\tfrac{1-\rho}{\tau} + \mu_i(1-\rho) = 0$,
  leaving $(a+d) r$;
\item[(ii)] $(1,1)$ entry: $2(a p + c r)
  = 2\rho\mu_i^2 - \tau\mu_i(\eta\mu_i - 1)
  = 2\rho\mu_i^2 - \rho\mu_i^2 + \tau\mu_i
  = (\rho\mu_i + \tau)\,\mu_i = (a+d) p$ (using $\tau\eta = \rho$);
\item[(iii)] $(2,2)$ entry: $2(b r + d s)
  = (1-\rho)(\eta\mu_i - 1) + 2(1-\rho)
  = (1-\rho)(\eta\mu_i + 1)
  = (\tau + \rho\mu_i)\tfrac{1-\rho}{\tau} = (a+d) s$
  (using $\eta = \rho/\tau$).
\end{itemize}
Hence $\mathrm L_i^\top \mathrm q_i + \mathrm q_i \mathrm L_i
= (\tau + \rho\mu_i)\,\mathrm q_i$, the first identity.  The second is
then automatic from the Cayley--Hamilton relation for $2\times2$
matrices, $\mathrm{adj}\,\mathrm L_i = (\mathrm{tr}\,\mathrm L_i)
I - \mathrm L_i$: the first identity reads $\mathrm L_i^\top \mathrm q_i
= \mathrm q_i\bigl((\mathrm{tr}\,\mathrm L_i) I - \mathrm L_i\bigr)
= \mathrm q_i\,\mathrm{adj}\,\mathrm L_i$, and right-multiplying by
$\mathrm L_i$ with $\mathrm{adj}(\mathrm L_i)\,\mathrm L_i
= (\det\mathrm L_i) I = \tau\mu_i I$ gives $\mathrm L_i^\top \mathrm q_i
\mathrm L_i = \tau\mu_i\,\mathrm q_i$.  (Structurally, $\mathrm q_i$ is,
up to scale, the unique symmetric form rendering the trace-free part
$\mathrm L_i - \tfrac12(\mathrm{tr}\,\mathrm L_i) I$ skew-adjoint,
i.e.\ the invariant quadratic form of the complex-eigenvalue block of
Lemma~\ref{lem:hurwitz}; cf.\ the real rotation--scaling normal form
\citep{hornjohnson2013}.)  Summing~\eqref{eq:Qsym-id} over $i$ and using $\mathrm q_i \succeq 0$ for
all large $t$ (Lemma~\ref{lem:Qsym-bounds}) together with
$\tau + \rho\mu_i \ge \tau$ and $\tau\mu_i \le \mu_+ \tau$ gives
$\bar L_t^\top \bar Q_t + \bar Q_t \bar L_t \succeq \tau \bar Q_t$ and
$\bar L_t^\top \bar Q_t \bar L_t \preceq \mu_+ \tau \bar Q_t$.  The
congruence $L_t = T_t \bar L_t T_t^{-1}$, $Q_t = T_t^{-\top} \bar Q_t
T_t^{-1}$ preserves both Loewner inequalities (e.g.\ $L_t^\top Q_t +
Q_t L_t = T_t^{-\top}(\bar L_t^\top \bar Q_t + \bar Q_t \bar L_t)
T_t^{-1}$), yielding~\eqref{eq:Qsym-matrix}.
\end{proof}

The same scalar reduction yields the two-sided eigenvalue bounds on $Q_t$ used
to convert the Lyapunov estimate into a Euclidean mean-square bound.

\begin{lemma}[Uniform and anisotropic bounds]
\label{lem:Qsym-bounds}
There exist $t$-independent constants $0 < c_0 \le C_0 < \infty$ such
that, for all sufficiently large $t$,
\begin{equation}
\label{eq:Qsym-bounds}
  c_0\, \begin{pmatrix} I_d & 0 \\ 0 & \tau_t^{-1} I_d \end{pmatrix}
  \;\preceq\; Q_t \;\preceq\; C_0\,\tau_t^{-1}\, I_{2d}.
\end{equation}
The lower bound is \emph{anisotropic}---it weights the buffer block by
$\tau_t^{-1}$---and in particular $\lambda_{\min}(Q_t) \ge c_0$ is bounded
below independently of $t$.  Consequently, if $\mathbb E V_t = O(\eta_t)$
for $V_t = z^\top Q_t z$, then
\begin{equation}
\label{eq:two-scale-split}
  \mathbb E\|\Delta_t\|^2 = O(\eta_t) = O(t^{-\alpha}), \qquad
  \mathbb E\|m_{t-1}\|^2 = O(\tau_t\eta_t) = O(\rho_t) = O(t^{-\gamma}),
\end{equation}
the two-time-scale split anticipated earlier.
\end{lemma}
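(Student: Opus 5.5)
The plan is to prove both inequalities in \eqref{eq:Qsym-bounds} in the scalar coordinates of Lemma~\ref{lem:Qsym-id}, and then transport them back through the congruence $T_t$. The key fact is that $D_t := \mathrm{diag}(I_d, \tau_t^{-1} I_d)$ has scalar blocks. It therefore commutes with $\mathrm{diag}(U,U)$, and a Loewner bound $\bar Q_t \succeq c\,D_t$ is equivalent to the $d$ scalar bounds $\mathrm q_i \succeq c\,\mathrm{diag}(1,\tau^{-1})$. Likewise $\bar Q_t \preceq C\tau^{-1} I_{2d}$ is equivalent to $\mathrm q_i \preceq C\tau^{-1} I_2$.

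\emph{Scalar lower bound.} Fix $c := \mu_-/2$ and consider
\[
\mathrm q_i - c\,\mathrm{diag}(1,\tau^{-1}) = \begin{pmatrix} \mu_i - c & r_i \\ r_i & \tfrac{1-\rho-c}{\tau}\end{pmatrix},
\qquad r_i = \tfrac{\eta\mu_i - 1}{2}.
\]
Since $\eta_t\to 0$ and $\mu_i\le\mu_+$, we have $|r_i|\le 1$ for large $t$. Since $\rho_t \to 0$, we may shrink $c$ if necessary (say $c \le \min(\mu_-/2, 1/4)$) so that $1-\rho-c \ge 1/2$ for large $t$. Both diagonal entries are then positive; the first is at least $\mu_-/2$. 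The determinant is at least $(\mu_-/2)\cdot\tfrac{1}{2\tau} - 1$, which is positive once $\tau_t < \mu_-/4$. This holds for all large $t$ because $\tau_t\to0$ under $\gamma>\alpha$. A symmetric $2\times2$ matrix with positive diagonal and positive determinant is positive definite, so $\mathrm q_i \succeq c\,\mathrm{diag}(1,\tau^{-1})$ uniformly in $i$. This is the only step with any delicacy: the argument hinges on the buffer entry $(1-\rho)/\tau$ blowing up, which dominates the $O(1)$ off-diagonal coupling. This is exactly where the anisotropy comes from.

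\emph{Scalar upper bound.} Since $\mathrm q_i\succeq0$, we have $\mathrm q_i \preceq \mathrm{tr}(\mathrm q_i)\, I_2 \le (\mu_+ + \tau^{-1}) I_2 \le 2\tau^{-1} I_2$ for large $t$, because $\tau\mu_+ \le 1$ eventually.

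\emph{Transport back.} We have $Q_t = T_t^{-\top}\bar Q_t T_t^{-1}$ with $T_t^{-1} = \mathrm{diag}(P_t^{-1/2}, P_t^{1/2})$. Then
\[
Q_t \succeq c\,\mathrm{diag}(P_t^{-1}, \tau^{-1}P_t) \succeq c\min(p_+^{-1}, p_-)\,D_t
\]
by \eqref{eq:appA-standing}. In the other direction,
\[
Q_t \preceq 2\tau^{-1}\mathrm{diag}(P_t^{-1},P_t) \preceq 2\max(p_-^{-1},p_+)\,\tau^{-1} I_{2d}.
\]
This gives \eqref{eq:Qsym-bounds} with $c_0 := c\min(p_+^{-1},p_-)$ and $C_0 := 2\max(p_-^{-1},p_+)$. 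Since $\tau_t^{-1}\ge1$ for large $t$, it also gives $\lambda_{\min}(Q_t)\ge c_0$.

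\emph{Consequence.} Take expectations in $V_t \ge c_0(\|\Delta_t\|^2 + \tau_t^{-1}\|m_{t-1}\|^2)$. Given $\mathbb E V_t = O(\eta_t)$, this yields $\mathbb E\|\Delta_t\|^2 = O(\eta_t)$ and $\mathbb E\|m_{t-1}\|^2 = O(\tau_t\eta_t)$. Finally $\tau_t\eta_t=\rho_t = O(t^{-\gamma})$, which is \eqref{eq:two-scale-split}.
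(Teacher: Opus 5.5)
Your proof is correct and follows the paper's argument essentially step for step. You reduce to the $2\times2$ blocks $\mathrm q_i$ via $\mathrm{diag}(U,U)$, which is legitimate because $\mathrm{diag}(I_d,\tau^{-1}I_d)$ is invariant under it; you show $\mathrm q_i\succeq c\,\mathrm{diag}(1,\tau^{-1})$ because the blown-up buffer entry $(1-\rho)/\tau$ dominates the $O(1)$ coupling $r_i$; you bound $\lambda_{\max}(\mathrm q_i)\le 2\tau^{-1}$; and you transfer through $T_t^{-1}$ with exactly the paper's constants $c_0=c\min(p_+^{-1},p_-)$, $C_0=2\max(p_-^{-1},p_+)$. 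The only differences are cosmetic: a positive-diagonal/positive-determinant test where the paper checks the Schur complement, and a trace bound where the paper uses $\max(p,s)+|r|$ for the top eigenvalue.
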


\begin{proof}
By the orthogonal reduction of Lemma~\ref{lem:Qsym-id}---the congruence
$\mathrm{diag}(U,U)$ block-diagonalizes $\bar Q_t = T_t^\top Q_t T_t$ into
the $2\times2$ blocks $\mathrm q_i$, so $\mathrm{spec}(\bar Q_t) =
\bigcup_{i} \mathrm{spec}(\mathrm q_i)$ and the Loewner relations
$\bar Q_t \succeq c\,\mathrm{diag}(I_d, \tau^{-1}I_d)$,
$\bar Q_t \preceq C\tau^{-1}I_{2d}$ hold iff the corresponding scalar
relations hold for every $\mathrm q_i$---it suffices to bound each
\[
  \mathrm q_i = \begin{pmatrix} \mu_i & r_i \\ r_i & s \end{pmatrix},
  \qquad r_i := \tfrac{\eta\mu_i - 1}{2}, \quad s := \tfrac{1-\rho}{\tau},
\]
and then transfer the bounds through the congruence $T_t =
\mathrm{diag}(P_t^{1/2}, P_t^{-1/2})$.  Since $\eta, \rho, \tau \to 0$ and
$\mu_- \le \mu_i \le \mu_+$, for all large $t$ we have $0 < \eta\mu_i < 1$,
hence $|r_i| = \tfrac12|1 - \eta\mu_i| \le \tfrac12$ and $r_i^2 \le
\tfrac14$, and $\tau^{-1} \ge \max\{\mu_+, 1\}$.

\emph{Upper bound.}  For a symmetric $2\times2$ block $\lambda_{\max} \le
\max(p,s) + |r|$, so $\lambda_{\max}(\mathrm q_i) \le \max(\mu_i, s) +
|r_i| \le 2\tau^{-1}$; the rate is sharp, $\lambda_{\max}(\mathrm q_i) =
s(1+o(1)) = \tfrac{1-\rho}{\tau}(1+o(1))$, so $\bar Q_t \preceq 2\tau^{-1}
I_{2d}$.

\emph{Anisotropic lower bound.}  Fix $c \in (0, \tfrac12\min\{\mu_-,1\})$.
The $(2,2)$ entry of $\mathrm q_i - c\,\mathrm{diag}(1,\tau^{-1})$ is
$(1-\rho-c)/\tau > 0$ for large $t$, and its Schur complement
\citep{hornjohnson2013} is $(\mu_i-c) - \tau r_i^2/(1-\rho-c) \ge
\tfrac14\mu_- > 0$ uniformly in $i$ (since $\mu_i - c \ge \tfrac12\mu_-$
and the subtracted term is $O(\tau)$).  Hence $\mathrm q_i \succeq
c\,\mathrm{diag}(1,\tau^{-1})$, i.e.\ $\bar Q_t \succeq c\,\mathrm{diag}(I_d,
\tau^{-1}I_d)$.

\emph{Transfer.}  Congruence by $T_t^{-1} = \mathrm{diag}(P_t^{-1/2},
P_t^{1/2})$ preserves the Loewner order; with $p_- I \preceq P_t \preceq
p_+ I$ from~\eqref{eq:appA-standing}, the lower bound transfers to $Q_t
\succeq c_0\,\mathrm{diag}(I_d, \tau_t^{-1}I_d)$ ($c_0 :=
c\min(p_+^{-1},p_-)$, whence $\lambda_{\min}(Q_t) \ge c_0$) and the upper
to $Q_t \preceq C_0\,\tau_t^{-1} I_{2d}$ ($C_0 := 2\max(p_-^{-1},p_+)$),
which is~\eqref{eq:Qsym-bounds}.

\emph{Two-time-scale split.}  With $z = (\Delta_t, m_{t-1})$, the
anisotropic lower bound gives $V_t = z^\top Q_t z \ge c_0(\|\Delta_t\|^2
+ \tau_t^{-1}\|m_{t-1}\|^2)$.  Hence $\mathbb E V_t = O(\eta_t)$ forces,
term by term,
$\mathbb E\|\Delta_t\|^2 \le c_0^{-1}\,\mathbb E V_t = O(\eta_t)
  = O(t^{-\alpha})$ and
$\mathbb E\|m_{t-1}\|^2 \le c_0^{-1}\tau_t\,\mathbb E V_t
  = O(\tau_t\eta_t) = O(\rho_t) = O(t^{-\gamma})$,
using $\tau_t\eta_t = \rho_t$; this is the split~\eqref{eq:two-scale-split}.
\end{proof}

\emph{Two alternative constructions.}
Two other choices satisfy a Lyapunov decrease but are worse scaled.  A
block-diagonal $Q_t^{\mathrm{bd}} = \mathrm{diag}\bigl(\tfrac{\tau}{\rho}
P^{-1}H^{-1}P^{-1},\, \tfrac{1-\rho}{\rho} H^{-1}P^{-1}H^{-1}\bigr)$ gives
an exact continuous-time Lyapunov drift (its off-diagonal terms cancel
without requiring $P_tH = HP_t$), but not the exact discrete one-step
identity~\eqref{eq:Qsym-onestep} of the preferred symmetrizer, and---as
$P_t, H$ have bounded spectra, so its eigenvalues track the scalar
prefactors
$\tfrac{\tau}{\rho} = \eta_t^{-1}$ and
$\tfrac{1-\rho}{\rho}\asymp\rho_t^{-1}$---has largest
eigenvalue of order $\rho_t^{-1} \gg \tau_t^{-1}$; the integral solution
$Q_t^{\mathrm{int}} = \int_0^\infty e^{-sL_t^\top}e^{-sL_t}\,\mathrm ds$ of $L_t^\top
Q_t^{\mathrm{int}} + Q_t^{\mathrm{int}} L_t = I$ exists by positive stability (Lemma~\ref{lem:hurwitz})
and is scaled as $\tau_t^{-2}$: $L_t$ is highly non-normal, with spectral
gap $\asymp\tau_t$ and an eigenvector matrix of condition number
$\asymp\tau_t^{-1/2}$, giving the upper bound $\|Q_t^{\mathrm{int}}\| \le \int_0^\infty
\|e^{-sL_t}\|^2\,\mathrm ds \lesssim \tau_t^{-1}/\tau_t = \tau_t^{-2}$---an
order already attained by a single scalar block, where the Lyapunov
solution has an entry $\asymp (\mathrm{tr}\,\mathrm L_i \cdot
\det\mathrm L_i)^{-1} \asymp \tau_t^{-2}$.  The
symmetrizer~\eqref{eq:Qsym} is preferred precisely for its anisotropic
bound~\eqref{eq:Qsym-bounds}---$O(1)$ lower and $\tau_t^{-1}$ upper
eigenvalue---which yields the $O(t^{-\alpha})$ MSE rate directly.

\subsection{One-Step Decrease and the Iterate Mean-Square Bound}

The exact identities of Lemma~\ref{lem:Qsym-id} now pay off twice---an exact
discrete one-step decrease, and, with control of the symmetrizer's time
variation, the augmented mean-square bound.  We establish the two ingredients
in turn, then assemble them.

\begin{lemma}[Exact one-step identity]
\label{lem:Qsym-decrease}
For every $t$,
\begin{equation}
\label{eq:Qsym-onestep}
  (I - \eta_t L_t)^\top Q_t\,(I - \eta_t L_t) = (1 - \rho_t)\, Q_t .
\end{equation}
\end{lemma}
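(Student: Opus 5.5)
The plan is to reduce to the scalar $2\times2$ blocks of Lemma~\ref{lem:Qsym-id} and there expand the quadratic in $\eta_t$ using the two exact identities~\eqref{eq:Qsym-id}. First I would pass to the transformed coordinates. With $T_t = \mathrm{diag}(P_t^{1/2}, P_t^{-1/2})$ we have $I - \eta_t L_t = T_t(I - \eta_t\bar L_t)T_t^{-1}$ and $Q_t = T_t^{-\top}\bar Q_t T_t^{-1}$. Hence
\[
  (I-\eta_tL_t)^\top Q_t(I-\eta_tL_t) = T_t^{-\top}(I-\eta_t\bar L_t)^\top \bar Q_t (I-\eta_t\bar L_t)T_t^{-1},
\]
and it suffices to prove $(I-\eta\bar L_t)^\top\bar Q_t(I-\eta\bar L_t) = (1-\rho)\bar Q_t$. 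Conjugating further by the orthogonal $\mathrm{diag}(U,U)$ splits this into the $d$ scalar statements $(I-\eta\mathrm L_i)^\top\mathrm q_i(I-\eta\mathrm L_i) = (1-\rho)\mathrm q_i$. No positivity of $Q_t$ is needed, because these are equalities and not Loewner inequalities, so the claim holds for every $t$.

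Next I expand each block:
\[
  (I-\eta\mathrm L_i)^\top\mathrm q_i(I-\eta\mathrm L_i) = \mathrm q_i - \eta(\mathrm L_i^\top\mathrm q_i + \mathrm q_i\mathrm L_i) + \eta^2\mathrm L_i^\top\mathrm q_i\mathrm L_i .
\]
Substituting the two identities of~\eqref{eq:Qsym-id} turns the right side into
\[
  \bigl[1 - \eta(\tau+\rho\mu_i) + \eta^2\tau\mu_i\bigr]\mathrm q_i .
\]
Using $\eta\tau = \rho$, the bracket equals $1 - \rho - \eta\rho\mu_i + \eta\rho\mu_i = 1-\rho$. This is the same cancellation that gave the modulus $\sqrt{1-\rho_t}$ in Lemma~\ref{lem:hurwitz}.

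Reassembling the blocks and undoing the congruences gives~\eqref{eq:Qsym-onestep}. The only potential obstacle is bookkeeping. One must check that $\mathrm{diag}(U,U)$ acting on the $(\Delta,m)$ block structure really produces $2\times2$ blocks after a permutation, which holds since all four blocks of $\bar L_t$ and $\bar Q_t$ are polynomials in $S_t$. One must also check that the transformation of $I - \eta L_t$ uses the similarity $T_t^{-1}(\cdot)T_t$ while that of $Q_t$ uses the congruence $T_t^\top(\cdot)T_t$; these combine consistently because $T_t^\top\,T_t^{-\top} = I$. Both checks are already contained in Lemma~\ref{lem:Qsym-id}, so the proof is essentially the two-line expansion above.
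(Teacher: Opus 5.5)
Your proposal is correct and follows essentially the same route as the paper's proof. You reduce via the congruence $T_t$ and the orthogonal $\mathrm{diag}(U,U)$ to the $2\times 2$ blocks, expand $(I-\eta\mathrm L_i)^\top\mathrm q_i(I-\eta\mathrm L_i)$, substitute both exact identities of~\eqref{eq:Qsym-id}, and cancel the $\mu_i$-terms using $\eta\tau=\rho$; the paper additionally notes that the resulting scalar factor is $\det(I-\eta\mathrm L_i)$, which is only an interpretive remark.
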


\begin{proof}
By the congruence of Lemma~\ref{lem:Qsym-id} it suffices to prove the
identity for each scalar block; this uses no positivity of $\mathrm q_i$,
since~\eqref{eq:Qsym-id} are exact equalities, so the conclusion holds
for \emph{every} $t$---in contrast to the Loewner
inequalities~\eqref{eq:Qsym-matrix}, which need $Q_t \succeq 0$.  Writing
$\mathrm M_i := I - \eta\mathrm L_i$ and expanding the congruence,
$\mathrm M_i^\top \mathrm q_i\,\mathrm M_i = \mathrm q_i
- \eta\bigl(\mathrm L_i^\top \mathrm q_i + \mathrm q_i \mathrm L_i\bigr)
+ \eta^2\,\mathrm L_i^\top \mathrm q_i \mathrm L_i$.
Substituting \emph{both} exact identities~\eqref{eq:Qsym-id},
$\mathrm L_i^\top \mathrm q_i + \mathrm q_i \mathrm L_i =
(\tau + \rho\mu_i)\,\mathrm q_i$ and
$\mathrm L_i^\top \mathrm q_i \mathrm L_i = \tau\mu_i\,\mathrm q_i$, yields
$\mathrm M_i^\top \mathrm q_i\,\mathrm M_i
= \bigl(1 - \eta(\tau + \rho\mu_i) + \eta^2\tau\mu_i\bigr)\,\mathrm q_i$.
The scalar factor is exactly $\det(\mathrm M_i)$: the $2\times2$
expansion $\det(I - \eta\mathrm L_i) = 1 - \eta\,\mathrm{tr}\,\mathrm L_i
+ \eta^2\det\mathrm L_i$, with $\mathrm{tr}\,\mathrm L_i = \tau + \rho\mu_i$
and $\det\mathrm L_i = \tau\mu_i$ (Lemma~\ref{lem:Qsym-id}), reproduces
the bracket.  Hence $\mathrm M_i^\top \mathrm q_i \mathrm M_i =
\det(\mathrm M_i)\,\mathrm q_i$---the invariant-form relation of the
second identity in~\eqref{eq:Qsym-id}, carried over from $\mathrm L_i$ to
the one-step map $\mathrm M_i$.  Finally, since $\eta\tau = \rho$, the two
$\mu_i$-terms cancel,
$-\eta\rho\mu_i + \eta^2\tau\mu_i = \eta\mu_i(\eta\tau - \rho) = 0$,
so $\det(\mathrm M_i) = 1 - \eta\tau = 1 - \rho$ and
$\mathrm M_i^\top \mathrm q_i \mathrm M_i = (1 - \rho)\,\mathrm q_i$.  The
congruence $Q_t = T_t^{-\top}\bar Q_t T_t^{-1}$, $L_t = T_t\bar L_t
T_t^{-1}$---whence $I - \eta_t L_t = T_t(I - \eta_t\bar L_t)T_t^{-1}$---then
gives~\eqref{eq:Qsym-onestep} for every $t$.
\end{proof}

Because $Q_t$ is time-varying, the Lyapunov recursion also feels its
step-to-step increment $Q_{t+1} - Q_t$; the next lemma shows this feedback is
dominated by the contraction under $\gamma < 1$.

\begin{lemma}[Blockwise time variation]
\label{lem:Qsym-timevar}
Let $D_t := Q_{t+1} - Q_t$.  Under the standing
conditions~\eqref{eq:appA-standing} and the momentum schedule
$\rho_t = c_1/t^\gamma$ with $\gamma\in(\alpha,1)$, $D_t$ has the block form
\[
  D_t = \begin{pmatrix} 0 & D_{12,t} \\ D_{12,t}^\top & D_{22,t}
  \end{pmatrix},\qquad
  \|D_{12,t}\|_{\mathrm{op}} = O(t^{-\alpha-1}),\quad
  \|D_{22,t}\|_{\mathrm{op}} = O(t^{\gamma-\alpha-1}),
\]
its iterate block vanishing because the $(1,1)$ block of $Q_t$
in~\eqref{eq:Qsym} is the $t$-independent matrix $H$.  Consequently, under the further
assumptions of Proposition~\ref{prop:augmented-mse} and with the threshold
constant
\[
  C_\sharp^\ast = C_\sharp^\ast(\gamma, p_\pm, \mu_\pm, C_P)
  := [(\gamma-\alpha)p_+ + C_P]/c_0,
\]
for every fixed $C_\sharp > C_\sharp^\ast$ there is a constant $C \ge 0$,
with both $C_\sharp$ and $C$ \emph{independent of $t$ and of any bootstrap
constant}, such that
\begin{equation}
\label{eq:timevar-bound}
  \bigl|\mathbb E[z_{t+1}^\top D_t z_{t+1}]\bigr|
  \;\le\; \tfrac{C_\sharp}{t}\,\mathbb E V_t \;+\; C\,\eta_t\rho_t
  \qquad\text{for all sufficiently large } t .
\end{equation}
The threshold $C_\sharp^\ast$ is independent of $c_1$, and the feedback
coefficient $C_\sharp/t$ is $o(\rho_t)$ under $\gamma < 1$, so the
contraction $\rho_t$ dominates the feedback without requiring a
$c_1$-threshold (cf.~\eqref{eq:c1-threshold}).
\end{lemma}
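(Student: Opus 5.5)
The argument has two parts: a deterministic block computation of $D_t=Q_{t+1}-Q_t$ from Definition~\ref{def:Qsym}, then a probabilistic bound on the quadratic form $z_{t+1}^\top D_t z_{t+1}$ via the anisotropic bound of Lemma~\ref{lem:Qsym-bounds}.

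\emph{Block form and rates.} The $(1,1)$ block of $Q_t$ is $H$ for every $t$, so $D_{11,t}=0$. The off-diagonal block telescopes as
\[
D_{12,t}=\tfrac12\bigl(\eta_{t+1}HP_{t+1}-\eta_tHP_t\bigr)
=\tfrac12 H\bigl[(\eta_{t+1}-\eta_t)P_{t+1}+\eta_t(P_{t+1}-P_t)\bigr].
\]
Using $|\eta_{t+1}-\eta_t|=O(t^{-\alpha-1})$, $\|P_{t+1}\|\le p_+$ and $\|P_{t+1}-P_t\|\le C_Pt^{-1}$ from \eqref{eq:appA-standing}, this gives $O(t^{-\alpha-1})$. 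For the buffer block write $s_t:=(1-\rho_t)/\tau_t=\eta_t(1-\rho_t)/\rho_t$, so that
\[
D_{22,t}=(s_{t+1}-s_t)P_{t+1}+s_t(P_{t+1}-P_t).
\]
Since $s_t\sim(\eta_0/c_1)t^{\gamma-\alpha}$ is a smooth power, $|s_{t+1}-s_t|\le(\gamma-\alpha)s_t t^{-1}(1+o(1))$, and the second term is at most $C_Ps_tt^{-1}$. Hence
\[
\|D_{22,t}\|\le\bigl[(\gamma-\alpha)p_++C_P+o(1)\bigr]\tau_t^{-1}t^{-1}=O(t^{\gamma-\alpha-1}).
\]
The exact constant is kept deliberately, because it is what yields $C_\sharp^\ast$.

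\emph{Probabilistic bound.} Write $z_{t+1}=(\Delta_{t+1},m_t)$. The buffer term satisfies
\[
|m_t^\top D_{22,t}m_t|\le\bigl[(\gamma-\alpha)p_++C_P+o(1)\bigr]t^{-1}\,\tau_t^{-1}\|m_t\|^2 .
\]
By Lemma~\ref{lem:Qsym-bounds}, $\tau_t^{-1}\|m_t\|^2\le c_0^{-1}z_{t+1}^\top Q_tz_{t+1}$; note $\tau_{t+1}/\tau_t\to1$ and $Q_t$ is the relevant form. I will control $\mathbb E[z_{t+1}^\top Q_tz_{t+1}]$ through the one-step identity \eqref{eq:Qsym-onestep}. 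Conditioning on $\mathcal F_{t-1}$ with predictable $Q_t$, the cross term with the martingale noise vanishes, giving
\[
\mathbb E[z_{t+1}^\top Q_tz_{t+1}\mid\mathcal F_{t-1}]\le(1-\rho_t)V_t+\mathrm{tr}(Q_tB_t\overline SB_t^\top)+(\text{Taylor terms}).
\]
The noise trace is $O(\eta_t\rho_t)$, since $\|Q_t\|\le C_0\tau_t^{-1}$ and $\|B_t\|^2=O(\rho_t^2)$. The Taylor contributions come from $u_t$ with $\|u_t\|\le L_R\|\Delta_t\|^2$. Handled with Cauchy--Schwarz, Assumption~\ref{ass:fourth}, and Young's inequality, they give a term absorbed into an $\epsilon\,t^{-1}\mathbb EV_t$ slack plus a forcing of order $\rho_t^2\tau_t^{-1}t^{-2\alpha}=o(\eta_t\rho_t)$.

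The mixed term is
\[
2|\Delta_{t+1}^\top D_{12,t}m_t|\le O(t^{-\alpha-1})\bigl(\|\Delta_{t+1}\|^2+\|m_t\|^2\bigr),
\]
which is $O(t^{-\alpha-1})$ times $\mathbb EV$-type quantities; it therefore enters as $o(t^{-1})\mathbb EV_t$ plus lower-order noise. The $\Delta$-part of $z_{t+1}$ is likewise dominated by $c_0^{-1}$ times the same quadratic form. Collecting everything, the $V_t$ coefficient is
\[
\bigl[(\gamma-\alpha)p_++C_P\bigr]/(c_0t)\,(1+o(1)),
\]
which is below $C_\sharp/t$ for any fixed $C_\sharp>C_\sharp^\ast$ once $t$ is large, with residual forcing $C\eta_t\rho_t$. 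The closing remark that $C_\sharp/t=o(\rho_t)$ is immediate from $\gamma<1$.

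\emph{Main obstacle.} The delicate step is keeping the constants independent of any bootstrap constant. The Taylor terms must not be bounded using the unknown rate of $\mathbb EV_t$ itself, so I would rely only on the a priori fourth-moment hypothesis and Young's inequality. I would also need to check that replacing $\Delta_{t+1},m_t$ by $z_t$ through the one-step map introduces only $(1+o(1))$ factors, not factors depending on $c_1$.
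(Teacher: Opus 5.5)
Your proof is correct. The block computation of $D_t$, including the sharp constant $(\gamma-\alpha)p_+ + C_P$ coming from the regularly varying factor $(1-\rho_t)/\tau_t$, matches the paper's; the only difference there is which index you attach to $P$ when telescoping.

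The probabilistic step takes a different route. The paper propagates the two components separately through the raw recursions:
\begin{itemize}
\item Jensen's inequality on the convex combination $m_t=(1-\rho_t)m_{t-1}+\rho_t(\cdot)$ gives $\mathbb E\|m_t\|^2\le c_0^{-1}\tau_t\,\mathbb EV_t(1+o(1))+C'\eta_t\rho_t$.
\item Young's inequality with weight $\rho_t$ on $\Delta_{t+1}=\Delta_t-\eta_tP_tm_t$, using $\gamma<2\alpha$, gives the matching bound for $\mathbb E\|\Delta_{t+1}\|^2$.
\item Only then is the anisotropic bound applied, at time $t$ to $z_t$.
\end{itemize}
You instead apply the anisotropic lower bound of the predictable $Q_t$ pathwise to $z_{t+1}$. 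This gives $|z_{t+1}^\top D_t z_{t+1}|\le C_\sharp^\ast(1+o(1))\,t^{-1}\,z_{t+1}^\top Q_t z_{t+1}$. You then control $\mathbb E[z_{t+1}^\top Q_t z_{t+1}]$ by the exact identity~\eqref{eq:Qsym-onestep}, with the same noise and Taylor estimates that already enter~\eqref{eq:mse-onestep}.

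What your route buys:
\begin{itemize}
\item No separate one-step propagation is needed; everything is inherited from the main recursion.
\item The coefficient $c_0^{-1}$ comes with the factor $1-\rho_t\le 1$ rather than $1+o(1)$.
\item It yields the multiplicative form $\mathbb EV_{t+1}\le(1+C_\sharp/t)\,\mathbb E[z_{t+1}^\top Q_t z_{t+1}]$.
\item The Young step on the iterate, and hence the side condition $\gamma<2\alpha$, is avoided.
\end{itemize}
The price is that the Taylor cross term must be handled by Young's inequality in the $Q_t$-inner product. This needs $Q_t\succeq 0$, which holds for large $t$, and a Young weight small enough to fit inside the slack $C_\sharp-C_\sharp^\ast$. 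You handle both correctly.

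The paper's component-wise route is more elementary. It also makes explicit that the buffer block alone carries the $1/t$ feedback. However, it re-derives second-moment bounds that your argument gets for free.
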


\begin{proof}
The $(1,1)$ block of $Q_t$ in~\eqref{eq:Qsym} is the $t$-independent
matrix $H$, so $D_{11,t} = H - H = 0$.  For the off-diagonal block,
$D_{12,t} = \tfrac12(\eta_{t+1} H P_{t+1} - \eta_t H P_t)$; telescoping the
product across the two indices,
$\eta_{t+1} H P_{t+1} - \eta_t H P_t
  = (\eta_{t+1} - \eta_t)\,H P_t + \eta_{t+1}\,H\,(P_{t+1} - P_t)$,
so submultiplicativity and $\|P_t\|_{\mathrm{op}} \le p_+$ give
$\|D_{12,t}\|_{\mathrm{op}}
  \le \tfrac12\|H\|\bigl(|\eta_{t+1} - \eta_t|\,p_+
      + \eta_{t+1}\,\|P_{t+1} - P_t\|_{\mathrm{op}}\bigr)$.
With $\eta_t = \eta_0 t^{-\alpha}$ the mean value theorem yields
$|\eta_{t+1} - \eta_t| = \alpha\eta_0\,\zeta^{-\alpha-1}
\le \alpha\eta_0\,t^{-\alpha-1}$ for some $\zeta \in (t, t+1)$, while
$\eta_{t+1} = O(t^{-\alpha})$ and $\|P_{t+1} - P_t\| = O(t^{-1})$
by~\eqref{eq:appA-standing}; both terms are therefore $O(t^{-\alpha-1})$
and $\|D_{12,t}\| = O(t^{-\alpha-1})$.

For the buffer block, the $(2,2)$ entry of $Q_t$ is $b_t P_t$ with
$b_t = (1-\rho_t)\tau_t^{-1} = \tau_t^{-1} - \eta_t = \tau_t^{-1}(1+o(1))$.
Applying the mean value theorem to the regularly varying $b(x) =
(\eta_0/c_1)x^{\gamma-\alpha} - \eta_0 x^{-\alpha}$ gives $b_{t+1} - b_t =
(\gamma-\alpha)\tau_t^{-1}/t\,(1+o(1))$ (the $\eta_x/x$ term negligible
since $\eta_x\tau_x = \rho_x \to 0$), so telescoping as above, with
$\|P_{t+1}-P_t\| \le C_P/t$, yields $\|D_{22,t}\| \le
[(\gamma-\alpha)p_+ + C_P + o(1)]\,\tau_t^{-1}/t = O(t^{\gamma-\alpha-1})$.
Writing $z_{t+1} = (\Delta_{t+1}, m_t)$ and using $D_{11,t} = 0$,
$\bigl|\mathbb E[z_{t+1}^\top D_t z_{t+1}]\bigr|
  \le 2\|D_{12,t}\|\,\mathbb E[\|\Delta_{t+1}\|\,\|m_t\|]
    + \|D_{22,t}\|\,\mathbb E\|m_t\|^2$.
The anisotropic bound~\eqref{eq:Qsym-bounds} expresses both second
moments through $V_t$: $\mathbb E\|\Delta_t\|^2 \le c_0^{-1}\mathbb E V_t$
and $\mathbb E\|m_{t-1}\|^2 \le c_0^{-1}\tau_t\mathbb E V_t$.  We
propagate these one step explicitly.

\emph{One-step propagation.}  The buffer recursion $m_t =
(1-\rho_t)m_{t-1} + \rho_t(H\Delta_t + u_t) + \rho_t\xi_t$, with the
martingale increment conditionally orthogonal to the predictable part,
Jensen's inequality on the convex combination (weights $\rho_t \le 1$ for
$t \ge t_0$, the finitely many earlier indices absorbed below), $\|u_t\|
\le L_R\|\Delta_t\|^2$, and $\mathbb E\|\Delta_t\|^4 = O(\eta_t^2)$
(Assumption~\ref{ass:fourth}), gives
\begin{equation}
\label{eq:mbuffer-prop}
  \mathbb E\|m_t\|^2 \le c_0^{-1}\tau_t\,\mathbb E V_t\,(1+o(1)) + C'\eta_t\rho_t.
\end{equation}
The iterate recursion $\Delta_{t+1} = \Delta_t - \eta_t P_t m_t$ with
Young's inequality (weight $\rho_t$; the cross factor
$(1+\rho_t^{-1})\eta_t^2 = \eta_t/\tau_t\,(1+o(1)) \to 0$ since
$\gamma < 2\alpha$) and~\eqref{eq:mbuffer-prop} then give $\mathbb
E\|\Delta_{t+1}\|^2 \le c_0^{-1}\mathbb E V_t\,(1+o(1)) + C'\eta_t\rho_t$,
where $C'$ depends only on the structural constants $(p_\pm, \|H\|, L_R,
c_0, \mathrm{tr}\,\overline S, C_4)$, not on any bootstrap constant.
Hence the buffer term carries the feedback, $\|D_{22,t}\|\,\mathbb
E\|m_t\|^2 \le C_\sharp^\ast(1+o(1))\,\mathbb E V_t/t +
O(\eta_t\rho_t)$ with $C_\sharp^\ast := [(\gamma-\alpha)p_+ + C_P]/c_0$,
at most $(C_\sharp/t)\,\mathbb E V_t + O(\eta_t\rho_t)$ for any fixed
$C_\sharp > C_\sharp^\ast$ and large $t$.

The cross term is lower order: by Cauchy--Schwarz and the two one-step
bounds, $\mathbb E[\|\Delta_{t+1}\|\,\|m_t\|] \le
c_0^{-1}\tau_t^{1/2}\,\mathbb E V_t\,(1+o(1)) + C''(\mathbb E
V_t\,\eta_t\rho_t)^{1/2} + C''\eta_t\rho_t$; multiplying by
$2\|D_{12,t}\| = O(\eta_t/t)$ and applying Young's inequality to the
middle term gives $2\|D_{12,t}\|\,\mathbb E[\|\Delta_{t+1}\|\,\|m_t\|]
\le o(t^{-1})\,\mathbb E V_t + O(\eta_t\rho_t)$.
Adding the two contributions gives~\eqref{eq:timevar-bound} for all
sufficiently large $t$: the $\mathbb E V_t$-proportional part is at most
$(C_\sharp/t)\,\mathbb E V_t$ for any fixed $C_\sharp > C_\sharp^\ast$
(both $C_\sharp$ and $C$ independent of any bootstrap constant), and $C$
collects the remaining $K$-independent $O(\eta_t\rho_t)$ terms.
\end{proof}

We now assemble the one-step decrease, the time-variation bound, and the noise
and Taylor terms into the Lyapunov recursion for $\mathbb E V_t$.

\begin{proof}[Completion of the proof of Proposition~\ref{prop:augmented-mse}]
Set $V_t(z) := z^\top Q_t z$, so $c_0\|z\|^2 \le V_t(z) \le C_0\tau_t^{-1}
\|z\|^2$ for all sufficiently large $t$ by~\eqref{eq:Qsym-bounds}, and
write $G_t := I - \eta_t L_t$.

\emph{Predictable one-step.}  Since $Q_t, G_t, B_t$ and $u_t = r(x_t)$
are $\mathcal F_{t-1}$-measurable (whereas $Q_{t+1}$ is only
$\mathcal F_t$-measurable, $P_{t+1}$ depending on $g_t$), we condition
with the \emph{predictable} $Q_t$, not $Q_{t+1}$.  Using
$z_{t+1} = G_t z_t + B_t(u_t + \xi_t)$, $\mathbb E[\xi_t \mid
\mathcal F_{t-1}] = 0$, and $\Sigma_t := \mathbb E[\xi_t\xi_t^\top \mid
\mathcal F_{t-1}] \preceq \overline S$,
\begin{equation}
\label{eq:mse-onestep}
  \mathbb E[z_{t+1}^\top Q_t z_{t+1} \mid \mathcal F_{t-1}]
  = z_t^\top G_t^\top Q_t G_t\, z_t
  + 2\langle G_t z_t,\, Q_t B_t u_t\rangle
  + (B_t u_t)^\top Q_t (B_t u_t)
  + \mathrm{tr}(Q_t B_t \Sigma_t B_t^\top),
\end{equation}
and the exact identity~\eqref{eq:Qsym-onestep} gives $G_t^\top Q_t G_t =
(1-\rho_t) Q_t$.

\emph{Noise.}  As $\|Q_t\|_{\mathrm{op}} = O(\tau_t^{-1})$ and the
dominant block of $B_t \Sigma_t B_t^\top$ is $\rho_t^2\Sigma_t$,
$\mathrm{tr}(Q_t B_t \Sigma_t B_t^\top) = O(\tau_t^{-1}\rho_t^2)
  = O(\eta_t\rho_t)$, using $\rho_t/\tau_t = \eta_t$.

\emph{Taylor terms.}  The remainder enters only through $B_t u_t$, with
$\|u_t\| \le L_R\|\Delta_t\|^2$ (Assumption~\ref{ass:quadratic}); the
quadratic term is $(B_t u_t)^\top Q_t(B_t u_t) =
O(\eta_t\rho_t\|\Delta_t\|^4)$ and, after Young's inequality on the cross
term, the non-absorbed part is $O(\varepsilon^{-1}\eta_t\|\Delta_t\|^4)$.
By Assumption~\ref{ass:fourth} ($\mathbb E\|\Delta_t\|^4 =
O(t^{-2\alpha})$), both are $O(t^{-3\alpha}) = o(\eta_t\rho_t)$.

\emph{Closing the recursion.}  Taking expectations
in~\eqref{eq:mse-onestep}, using $G_t^\top Q_t G_t = (1-\rho_t)Q_t$ and
the noise and Taylor bounds, and adding the correction
$\mathbb E[z_{t+1}^\top(Q_{t+1} - Q_t) z_{t+1}]$ that converts
$\mathbb E[z_{t+1}^\top Q_t z_{t+1}]$ into $\mathbb E V_{t+1}$, we obtain,
for any fixed $\varepsilon \in (0,1)$,
\begin{equation}
\label{eq:mse-recursion}
  \mathbb E V_{t+1} \le \bigl(1 - (1-\varepsilon)\rho_t\bigr)\mathbb E V_t
  + C_\varepsilon\,\eta_t\rho_t
  + \bigl|\mathbb E[z_{t+1}^\top(Q_{t+1} - Q_t) z_{t+1}]\bigr|.
\end{equation}
The last term is the time variation.  For all sufficiently large $t$,
Lemma~\ref{lem:Qsym-timevar} supplies the absorption
bound~\eqref{eq:timevar-bound}, $|\mathbb E
[z_{t+1}^\top(Q_{t+1}-Q_t)z_{t+1}]| \le (C_\sharp/t)\mathbb E V_t +
C\eta_t\rho_t$, where $C_\sharp$ is any fixed constant exceeding
$C_\sharp^\ast = [(\gamma-\alpha)p_+ + C_P]/c_0$, and $C_\sharp, C$ are
both independent of any bootstrap constant.  Substituting
into~\eqref{eq:mse-recursion} and using $\rho_t = c_1/t^\gamma$,
\begin{equation}
\label{eq:mse-recursion-final}
  \mathbb E V_{t+1} \le \bigl(1 - (1-\varepsilon)\rho_t + C_\sharp/t\bigr)
  \mathbb E V_t + C'\,\eta_t\rho_t .
\end{equation}
Since $\gamma < 1$, the feedback-to-contraction ratio $(C_\sharp/t)/\rho_t
= (C_\sharp/c_1)\,t^{\gamma-1} \to 0$, so the feedback is negligible
against $\rho_t$ for \emph{any} $c_1 > 0$:
\begin{equation}
\label{eq:c1-threshold}
  c_1 > 0
  \qquad (\text{the threshold } c_1^\ast(\gamma) \text{ is trivial for } \gamma<1).
\end{equation}

\emph{Bootstrap induction.}  Fix $\varepsilon = \tfrac12$ and $t_0$ large
enough that $\rho_t \le 1$, \eqref{eq:timevar-bound} holds, and the
$\rho_t$-dominance
\begin{equation}
\label{eq:rho-dominates}
  (1-\varepsilon)\rho_t - \frac{C_\sharp + \alpha}{t} + O(t^{-2})
  \;\ge\; \tfrac12(1-\varepsilon)\,\rho_t
\end{equation}
holds for $t \ge t_0$ (as $\rho_t = c_1 t^{-\gamma}$, $\gamma < 1$,
dominates $t^{-1}$).  With $K := \max\{2C'/(1-\varepsilon),
\max_{t \le t_0}\eta_t^{-1}\mathbb E V_t\} < \infty$ (finite since bounded
gradients make each $\mathbb E V_t < \infty$), induction
on~\eqref{eq:mse-recursion-final} via $\eta_{t+1} = \eta_t(1 - \alpha/t +
O(t^{-2}))$ and~\eqref{eq:rho-dominates} gives $\mathbb E V_t \le K\eta_t$
for all $t \ge 1$.

Hence $\mathbb E V_t = O(\eta_t) = O(t^{-\alpha})$, and
by~\eqref{eq:two-scale-split} $\mathbb E\|z_t\|^2 =
\mathbb E\|\Delta_t\|^2 + \mathbb E\|m_{t-1}\|^2 = O(t^{-\alpha})$ for all
$t \ge 1$, verifying Assumption~\ref{ass:iterate}.
\end{proof}

\paragraph{What is proved and what is assumed.}
The construction~\eqref{eq:Qsym}, its
identities~\eqref{eq:Qsym-id}--\eqref{eq:Qsym-matrix}, the anisotropic
bounds~\eqref{eq:Qsym-bounds}, the exact one-step
identity~\eqref{eq:Qsym-onestep}, and the time-variation bound
(Lemma~\ref{lem:Qsym-timevar}) hold for any positive-definite $P_t$
obeying~\eqref{eq:appA-standing}, with no commutativity hypothesis: the
exact identities~\eqref{eq:Qsym-id} and~\eqref{eq:Qsym-onestep} for every
$t$, and the positivity-dependent bounds~\eqref{eq:Qsym-matrix},
\eqref{eq:Qsym-bounds}, and Lemma~\ref{lem:Qsym-timevar} for all
sufficiently large $t$, the expectation bound of the latter additionally
requiring the stochastic assumptions of Proposition~\ref{prop:augmented-mse}.
Given these, the completion is an elementary predictable one-step
recursion closed by a bootstrap in which, under $\gamma\in(\alpha,1)$,
the contraction $\rho_t$ dominates the time-variation feedback for any
$c_1 > 0$, the constant $C_\sharp$ affecting only the leading
$\eta_t$-coefficient and not the rate.  Beyond the standing
stochastic-approximation assumptions and schedule in force throughout
this section---martingale-difference noise, the conditional covariance
bound, bounded gradients, and $\gamma\in(\alpha,1)$---the argument
requires only two additional inputs: the preconditioner
conditions~\eqref{eq:appA-standing} (supplied by SA-Adam,
Proposition~\ref{prop:sa-adam-stab}) and the fourth-moment stability of
Assumption~\ref{ass:fourth} (used only for the Taylor remainder).

\section{Detailed Proof of Theorem~\ref{thm:augmented-clt}}
\label{app:clt-proof}

We complete the steps of the proof sketch in Section~\ref{sec:clt}.
The proof proceeds entirely in the raw coordinates of $z_t$: the
exact Polyak--Ruppert identity~\eqref{eq:AtBt} below extracts the
$H^{-1}SH^{-1}$ sandwich for the iterate marginal directly, with no
rescaling required.  The buffer marginal is degenerate at the
$\sqrt n$-scale in these coordinates---see
Remark~\ref{rem:raw-vs-rescaled}---which is harmless for
Theorem~\ref{thm:main}, since only the iterate $(1,1)$ block of
$\Sigma_z$ enters the main result.

\paragraph{Provenance.}  The architecture is the classical
Polyak--Ruppert route---an Abel-summation decomposition of $\overline
z_n$ closed by a martingale CLT \citep{polyak1992acceleration,
halldavid1980martingale}---carried out for \emph{constant}-drift linear
SA by \citet[Thm.~1]{mou2020linear} and, for the \emph{time-varying} preconditioner
setting, by the remainder analysis of \citet{anhuo2026}.  New here
are the two ingredients that close the momentum-augmented route: the
exact identity $A_tB_t = (-H^{-1},0)^\top$~\eqref{eq:AtBt} and the
vanishing of the $\sqrt n$-scaled remainder under
$\gamma\in(\alpha,1)$ (Lemma~\ref{lem:Rnz-bound}).

\subsection{Polyak--Ruppert Decomposition}

For $z_{t+1} = (I - \eta_t L_t) z_t + B_t(u_t + \xi_t)$,
left-multiply by $A_t := L_t^{-1}/\eta_t$ and rearrange:
$z_t = A_t (z_t - z_{t+1}) + A_t B_t (u_t + \xi_t)$.
Summing over $t = 1, \ldots, n$ and applying Abel summation:
$\sum_{t=1}^n z_t = A_1 z_1 - A_n z_{n+1} + \sum_{t=2}^n (A_t -
  A_{t-1}) z_t + \sum_{t=1}^n A_t B_t (u_t + \xi_t)$.
Dividing by $n$:
\begin{equation}
\label{eq:pr-decomposition}
  \overline z_n = R_n^z + \overline{A_t B_t \xi_t} + \overline{A_t B_t u_t},
\end{equation}
where $R_n^z := n^{-1}[A_1 z_1 - A_n z_{n+1} + \sum_{t=2}^n (A_t -
A_{t-1}) z_t]$ and the bars denote sample means.

The point of the identity is that $A_t B_t = L_t^{-1} B_t/\eta_t$
extracts the canonical Polyak--Ruppert sandwich \emph{exactly}.  Using
$L_t^{-1}$ from Lemma~\ref{lem:Linv}, $B_t = (-\eta_t \rho_t P_t, \rho_t
I)^\top$, the identity $M_t P_t = (P_t H)^{-1} P_t = H^{-1}$, and
$\tau_t^{-1} = \eta_t/\rho_t$:
\begin{equation}
\label{eq:AtBt}
\begin{aligned}
  A_t B_t &= \frac{L_t^{-1} B_t}{\eta_t}
  = \frac{1}{\eta_t}\begin{pmatrix}
      -\eta_t \rho_t\, M_t P_t - (1-\rho_t)\rho_t \tau_t^{-1} H^{-1} \\
      -\eta_t \rho_t\, P_t^{-1} P_t + \rho_t^2 \tau_t^{-1} I
    \end{pmatrix} \\
  &= \frac{1}{\eta_t}\begin{pmatrix}
      -\eta_t \rho_t H^{-1} - (1-\rho_t)\eta_t H^{-1} \\
      -\eta_t \rho_t I + \eta_t \rho_t I
    \end{pmatrix}
  = \begin{pmatrix} -H^{-1} \\ 0 \end{pmatrix}.
\end{aligned}
\end{equation}
The identity is exact (no $o(1)$ corrections) and holds for every $t$
and every positive-definite $P_t$, using only $\tau_t = \rho_t/\eta_t >
0$; no bound $\tau_t \le 1$ is needed.  The iterate block is exactly
$-H^{-1}$ and the buffer block exactly $0$, so the buffer average carries
no leading-order term---the source of the $\sqrt n$-scale buffer
degeneracy (Remark~\ref{rem:raw-vs-rescaled}).  In particular
$\bigl[\overline{A_t B_t \xi_t}\bigr]_{1} = -H^{-1}\,\overline{\xi}_n$.

\subsection{Bound on the Remainder}

The remainder $R_n^z$ collects the boundary and increment terms of the Abel
summation~\eqref{eq:pr-decomposition}; we show it is negligible at the
$\sqrt n$ scale.

\begin{lemma}[Time-varying remainder bound]
\label{lem:Rnz-bound}
Assume the schedule $\rho_t = c_1/t^\gamma$ with $c_1>0$,
$\gamma \in (\alpha, 1)$, $\alpha \in (1/2, 1)$ (under the convex-weight
convention of Section~\ref{sec:sa-adam-defn}, i.e.\ the shifted schedule
$\rho_t = c_1/(t + t_0)^\gamma$ when $c_1 \ge 1$); the augmented
mean-square bounds $\|\Delta_t\|_2 = O(t^{-\alpha/2})$ and
$\|m_t\|_2 = O(t^{-\gamma/2})$ (i.e.~\eqref{eq:clt-stability}); and the
standing preconditioner conditions~\eqref{eq:appA-standing}
(in particular $\|P_t^{-1}\|_{\mathrm{op}} = O(1)$ and
$\|P_t - P_{t-1}\|_{\mathrm{op}} = O(t^{-1})$).  Then
\begin{equation}
\label{eq:Rnz-target}
  \|R_n^z\|_{L^2} = o(n^{-1/2}).
\end{equation}
\end{lemma}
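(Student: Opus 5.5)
The plan is to bound the three pieces of $R_n^z = n^{-1}[A_1 z_1 - A_n z_{n+1} + \sum_{t=2}^n (A_t - A_{t-1}) z_t]$ separately in $L^2$. I will use Minkowski's inequality together with the fact that every coefficient matrix obeys \emph{almost-sure} operator-norm bounds. This lets each coefficient be pulled out of the $L^2$ norm, so only the second moments in~\eqref{eq:clt-stability} are needed, and no independence or martingale structure enters.

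First, make $A_t = L_t^{-1}/\eta_t$ explicit using Lemma~\ref{lem:Linv} and the relation $\eta_t\tau_t = \rho_t$:
\[
  A_t = \begin{pmatrix} \eta_t^{-1} M_t & -\dfrac{1-\rho_t}{\rho_t}\,H^{-1} \\[4pt] \eta_t^{-1} P_t^{-1} & I \end{pmatrix}.
\]
The point of this form is that the iterate column carries the weight $\eta_t^{-1}\asymp t^{\alpha}$, while the buffer column carries the weight $\rho_t^{-1}\asymp t^{\gamma}$. These weights act on $\Delta_t$ and on $m_{t-1}$, respectively.

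\emph{Boundary terms.} The term $A_1 z_1$ is deterministic and fixed, so $n^{-1}A_1z_1 = O(n^{-1})$. For $A_n z_{n+1}$ with $z_{n+1}=(\Delta_{n+1},m_n)$, I use $\sup_t\|M_t\|_{\mathrm{op}}<\infty$ and $\|P_t^{-1}\|_{\mathrm{op}}\le p_-^{-1}$ to get
\[
  \|A_n z_{n+1}\|_{L^2} \lesssim \eta_n^{-1}\|\Delta_{n+1}\|_2 + \rho_n^{-1}\|m_n\|_2 + \|m_n\|_2 \lesssim n^{\alpha/2} + n^{\gamma/2}.
\]
After dividing by $n$, this gives $O(n^{\alpha/2-1}+n^{\gamma/2-1}) = o(n^{-1/2})$ precisely because $\alpha<1$ and $\gamma<1$. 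This endpoint buffer term is the step where $\gamma<1$ is sharp, and it matches the $\Theta(n^{-1/2})$ obstruction at $\gamma=1$ announced in Remark~\ref{rem:Rnz-gamma-necessary}.

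\emph{Increment sum.} I will bound $A_t - A_{t-1}$ blockwise.

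For the iterate column, write
\[
  \eta_t^{-1}M_t - \eta_{t-1}^{-1}M_{t-1} = \eta_t^{-1}(M_t-M_{t-1}) + (\eta_t^{-1}-\eta_{t-1}^{-1})M_{t-1}.
\]
Since $M_t = H^{-1}P_t^{-1}$, we have $P_t^{-1}-P_{t-1}^{-1} = -P_t^{-1}(P_t-P_{t-1})P_{t-1}^{-1}$. Combined with the $O(t^{-1})$ increment in~\eqref{eq:appA-standing}, this gives $\|M_t - M_{t-1}\|_{\mathrm{op}} = O(t^{-1})$ a.s. Also $|\eta_t^{-1}-\eta_{t-1}^{-1}| = O(t^{\alpha-1})$. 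So the first column has increment $O(t^{\alpha-1})$ a.s., and the same bound holds for the $P_t^{-1}/\eta_t$ block.

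For the buffer column, $\rho_t^{-1}$ is a smooth power (shifted or not), so $|(1-\rho_t)\rho_t^{-1} - (1-\rho_{t-1})\rho_{t-1}^{-1}| = O(t^{\gamma-1})$. The identity block does not vary.

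Hence
\[
  \Bigl\|\sum_{t=2}^n (A_t-A_{t-1})z_t\Bigr\|_{L^2} \lesssim \sum_{t=2}^n \bigl(t^{\alpha-1}t^{-\alpha/2} + t^{\gamma-1}t^{-\gamma/2}\bigr) = O(n^{\alpha/2}+n^{\gamma/2}).
\]
Dividing by $n$ again gives $o(n^{-1/2})$. The finitely many early indices before the large-$t$ regime, and the offset $t_0$ of the shifted schedule, change only constants.

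I expect no step to be a genuine obstacle. The one point requiring care is tracking the $\rho_t^{-1}$ weight on the buffer column. This weight looks large, but it is exactly compensated by the anisotropic buffer rate $\|m_t\|_2 = O(t^{-\gamma/2})$ from Proposition~\ref{prop:augmented-mse}. I will make the bookkeeping explicit so that $\gamma<1$ is visibly the only condition used beyond $\alpha<1$.
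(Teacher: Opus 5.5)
Your proof is correct, and it takes a more direct route than the paper's. You bound $R_n^z$ as defined. You write $A_t = L_t^{-1}/\eta_t$ explicitly, with the iterate column weighted by $\eta_t^{-1}$ and the buffer column by $\rho_t^{-1}$. You then use the almost-sure operator bounds on these columns, and their $O(t^{\alpha-1})$ and $O(t^{\gamma-1})$ increments, against the anisotropic moment rates. This is the standard Abel-remainder estimate. It uses nothing beyond the lemma's hypotheses: no dynamics, no exact $A_tB_t$ identity, no gradient expansion.

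The paper never bounds $A_t$ directly. It combines the exact identity $A_tB_t = (-H^{-1},0)^\top$ with the averaged gradient identity. This shows that the two blocks of $R_n^z$ are \emph{exactly} $H^{-1}\overline g_n$ and $\overline m_{n-1}$. It then controls these two averages by summation by parts. The first uses the buffer recursion $g_t = \rho_t^{-1}(m_t - m_{t-1}) + m_{t-1}$. The second uses the inverted update $m_t = \eta_t^{-1}P_t^{-1}(\Delta_t - \Delta_{t+1})$, and needs $m_0 = 0$ to kill a boundary term. The weights in those two summations by parts are precisely your column weights $\rho_t^{-1}$ and $\eta_t^{-1}P_t^{-1}$, so the estimates agree in substance.

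What the paper's detour buys is interpretation. It identifies the iterate block of the remainder with the ``tautological'' $H^{-1}\overline g_n$ of Proposition~\ref{prop:tautology}. This shows concretely that the augmented mean-square rates are what make that object negligible. Your version is shorter and self-contained.

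One small bookkeeping remark: $\gamma<1$ enters your increment sum, via $\sum_{t\le n} t^{\gamma/2-1} = O(n^{\gamma/2})$, exactly as much as the endpoint buffer term. Both contributions are $O(n^{(\gamma-1)/2})$ after $\sqrt n$-scaling and become $O(1)$ at $\gamma=1$, so the endpoint term is not the only place the condition is sharp.
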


\begin{proof}
We prove the lemma via two concrete identities,
$\sqrt n\,\overline m_{n-1} \to 0$ and $\sqrt n\,\overline g_n \to 0$ in
$L^2$, which together cover both blocks of $R_n^z$.

\emph{Setup.}  Bars denote \emph{averages}: $\overline g_n :=
\frac1n\sum_{t=1}^n g_t$ and $\overline m_{n-1} := \frac1n\sum_{t=0}^{n-1}
m_t$ (distinct from the endpoint $m_{n-1}$); write $\|X\|_2 := (\mathbb
E\|X\|^2)^{1/2}$.  The standing conditions~\eqref{eq:appA-standing} give
$\|P_t^{-1}\|_{\mathrm{op}} = O(1)$ and $\|P_t^{-1} -
P_{t-1}^{-1}\|_{\mathrm{op}} = O(t^{-1})$; all partial-sum bounds below
combine Minkowski's inequality with $\sum_{t=1}^n t^{\beta-1} =
O(n^\beta)$ ($\beta > 0$).

\emph{Averaged buffer.}  The update $\Delta_{t+1} = \Delta_t - \eta_t P_t
m_t$ inverts to $m_t = C_t(\Delta_t - \Delta_{t+1})$ with $C_t :=
\eta_t^{-1}P_t^{-1}$, so $\|C_t\| = O(t^\alpha)$ and $\|C_t - C_{t-1}\| =
O(t^{\alpha-1})$.  Summation by parts and $\|\Delta_t\|_2 =
O(t^{-\alpha/2})$ give $\bigl\|\sum_{t=1}^{n-1} m_t\bigr\|_2 \le O(1) +
O(n^\alpha)O(n^{-\alpha/2}) +
\sum_{t=2}^{n-1}O(t^{\alpha-1})O(t^{-\alpha/2}) = O(n^{\alpha/2})$, so
dividing by $n$ and scaling by $\sqrt n$,
\begin{equation}
\label{eq:mbar-vanish}
  \bigl\|\sqrt n\,\overline m_{n-1}\bigr\|_2
  \;=\; \Bigl\|\sqrt n\cdot \tfrac{1}{n}\sum_{t=0}^{n-1}m_t\Bigr\|_2
  \;=\; O(n^{(\alpha-1)/2}) \;\to\; 0 \quad\text{for }\alpha<1.
\end{equation}

\emph{Averaged gradient.}  Write $a_t := \rho_t^{-1}$, which satisfies
$a_t = O(t^\gamma)$ and $|a_{t+1} - a_t| = O(t^{\gamma-1})$ for both the
plain and the shifted (convex-weight) schedules of
Section~\ref{sec:sa-adam-defn}.  From the buffer recursion $g_t = a_t(m_t
- m_{t-1}) + m_{t-1}$, summation by parts gives
$\sum_{t=1}^n g_t
  = a_n m_n - a_1 m_0 + \sum_{t=1}^{n-1}(a_t - a_{t+1})\,m_t
    + \sum_{t=1}^n m_{t-1}
  = a_n m_n + \sum_{t=1}^{n-1}(a_t - a_{t+1})\,m_t
    + \sum_{t=1}^n m_{t-1}$,
the boundary term $a_1 m_0$ vanishing by the initialization $m_0 = 0$.
Taking $\|\cdot\|_2$,
$\Bigl\|\sum_{t=1}^n g_t\Bigr\|_2
  \le O(n^\gamma)\cdot O(n^{-\gamma/2})
    + \sum_{t=1}^{n-1}O(t^{\gamma-1})\cdot O(t^{-\gamma/2})
    + O(n^{\alpha/2})
  = O(n^{\gamma/2}) + O(n^{\alpha/2})$.
Dividing by $n$ and multiplying by $\sqrt n$,
\begin{equation}
\label{eq:gbar-vanish}
  \bigl\|\sqrt n\,\overline g_n\bigr\|_2
  \;=\; O(n^{(\gamma-1)/2}) + O(n^{(\alpha-1)/2}) \;\to\; 0
  \quad\text{for }\alpha<\gamma<1.
\end{equation}

\emph{Combining.}  It remains to identify the two blocks of $R_n^z$ with
the averages just bounded.  Write $z_t = (\Delta_t, m_{t-1})$, so that
$[\overline z_n]_1 = \overline\Delta_n$ and $[\overline z_n]_2 =
n^{-1}\sum_{t=1}^n m_{t-1} = \overline m_{n-1}$ (using $m_0 = 0$).

\emph{Iterate block.}  Averaging the gradient identity $g_t = H\Delta_t
+ u_t + \xi_t$ over $t = 1,\dots,n$ gives $\overline g_n = H\overline
\Delta_n + \overline u_n + \overline\xi_n$, that is,
\begin{equation}
\label{eq:Delta-grad}
  \overline\Delta_n = H^{-1}\overline g_n - H^{-1}\overline u_n
    - H^{-1}\overline\xi_n .
\end{equation}
On the other hand, the first block of the
decomposition~\eqref{eq:pr-decomposition}, together with the exact
identity~\eqref{eq:AtBt}---whose iterate row is the \emph{constant}
$-H^{-1}$, so $[\overline{A_tB_t\xi_t}]_1 = -H^{-1}\overline\xi_n$ and
$[\overline{A_tB_t u_t}]_1 = -H^{-1}\overline u_n$---reads
$\overline\Delta_n = R_{n,1}^z - H^{-1}\overline\xi_n
    - H^{-1}\overline u_n$.
Subtracting this from~\eqref{eq:Delta-grad}, the noise and Taylor
averages cancel \emph{exactly}---each enters both expressions with the
same coefficient $-H^{-1}$---leaving
$R_{n,1}^z = H^{-1}\,\overline g_n$,
so $\sqrt n\,R_{n,1}^z = H^{-1}\,\sqrt n\,\overline g_n \to 0$ in $L^2$
by~\eqref{eq:gbar-vanish}.

\emph{Buffer block.}  The second row of $A_tB_t$ is exactly $0$
by~\eqref{eq:AtBt}, so $[\overline{A_tB_t\xi_t}]_2 =
[\overline{A_tB_t u_t}]_2 = 0$ and the second block
of~\eqref{eq:pr-decomposition} collapses to $R_{n,2}^z = [\overline z_n]_2
= \overline m_{n-1}$ exactly; hence $\sqrt n\,R_{n,2}^z \to 0$ in $L^2$
by~\eqref{eq:mbar-vanish}.  Combining the two blocks yields
$\|\sqrt n\, R_n^z\|_2 \to 0$, equivalently~\eqref{eq:Rnz-target}.
\end{proof}

\begin{remark}[Why $\gamma < 1$ is necessary]
\label{rem:Rnz-gamma-necessary}
The bound $\|R_n^z\|_{L^2} = o(n^{-1/2})$ is sharp at $\gamma = 1$: at
the boundary, the endpoint-term scaling $\sqrt n\cdot
n^{\gamma-1}\|m_n\|_{L^2}/c_1$ produces $\sqrt n\cdot
n^{0}\cdot O(n^{-1/2}) = O(1)$, not $o(1)$.  Concretely, take the scalar
model with $H = 1$ and noise variance $\sigma^2$.  For $c_1 >
(1+\alpha)/2$ the momentum second moment $v_t := \mathbb E[m_t^2]$ obeys
the balance
$v_{t+1} = \Bigl(1 - \tfrac{2c_1 - \alpha}{t} + o(t^{-1})\Bigr) v_t
    + \tfrac{c_1^2\sigma^2}{t^2} + o(t^{-2})$,
so $n\,v_n \to c_1^2\sigma^2/(2c_1 - 1 - \alpha)$ and the coupled
averaged-iterate balance gives $n\,\mathbb E[\overline\Delta_n^2] \to
\sigma^2\bigl(1 + 1/(2c_1 - 1 - \alpha)\bigr)$, \emph{strictly larger}
than the sandwich baseline $\sigma^2 = H^{-1}SH^{-1}$; at $c_1 =
(1+\alpha)/2$ the limit grows logarithmically and below it the boundary
term is larger still, so the sandwich fails throughout $\gamma = 1$.  The
schedule
$\rho_t = c_1/t^\gamma$ with $\gamma\in(\alpha, 1)$ is the right
parametric choice: $\gamma > \alpha$ preserves two-time-scale
separation (Appendix~\ref{app:lyapunov}), $\gamma < 1$ delivers the
sandwich limit.  The canonical Polyak--Ruppert choice $\gamma = 1$ is
just outside the admissible range.
\end{remark}

\subsection{Martingale CLT for the Leading Term}

The leading term of the decomposition~\eqref{eq:pr-decomposition} is a
martingale sum with the constant coefficient~\eqref{eq:AtBt}; we now establish
its central limit theorem.

\begin{lemma}[Martingale CLT]
\label{lem:mart-clt}
Under Assumptions~\ref{ass:mg}--\ref{ass:convex}, bounded gradients, the
conditional-covariance continuity at $x^*$ of
Theorem~\ref{thm:augmented-clt}(a), and the iterate bound
$\mathbb E\|\Delta_t\|^2 = O(t^{-\alpha})$ of~\eqref{eq:clt-stability},
\[
  \sqrt n\, \overline{A_t B_t \xi_t} \;\xrightarrow{d}\;
  \mathcal N\bigl(0,\, \Sigma_z\bigr),
  \qquad
  \Sigma_z = \begin{pmatrix} H^{-1} S H^{-1} & 0 \\ 0 & 0 \end{pmatrix},
\]
which coincides with the closed form $\Sigma_z = L^{-1}\Sigma_w L^{-\top}$
of Theorem~\ref{thm:projection} (in the raw coordinates of $z_t$).
\end{lemma}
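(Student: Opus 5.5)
By the exact identity~\eqref{eq:AtBt}, $A_tB_t = (-H^{-1},0)^\top$ for every $t$, so the leading term is deterministic-coefficient in disguise:
\[
\sqrt n\,\overline{A_tB_t\xi_t} = \bigl(-H^{-1}\,n^{-1/2}\textstyle\sum_{t=1}^n \xi_t,\ 0\bigr).
\]
The buffer block is identically zero, and the whole statement reduces to a $d$-dimensional martingale CLT
\[
n^{-1/2}\textstyle\sum_{t\le n}\xi_t \xrightarrow{d} \mathcal N(0,S).
\]
Applying the fixed linear map $v\mapsto(-H^{-1}v,0)$ (continuous mapping) then gives the limit $\mathcal N(0,\mathrm{diag}(H^{-1}SH^{-1},0))$. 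This agrees with $L^{-1}\Sigma_wL^{-\top}$: its $(1,1)$ block is Theorem~\ref{thm:projection}, and its buffer row and column vanish because $L_t^{-1}\Sigma_w(t)L_t^{-\top} = (A_tB_t)S(A_tB_t)^\top$ by $\Sigma_w(t)=B_tSB_t^\top/\eta_t^2$.

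To prove the $d$-dimensional CLT, I would use the Cram\'er--Wold device and the Hall--Heyde martingale CLT for the triangular array $X_{n,t}:=n^{-1/2}a^\top\xi_t$, for fixed $a\in\mathbb R^d$. Assumption~\ref{ass:mg} makes this a martingale difference array with respect to $\mathcal F_t$. Two conditions must be checked.

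\emph{Conditional Lindeberg.} Bounded gradients give $\|\xi_t\|\le 2G$ a.s., since $\nabla F(x_t)=\mathbb E[g_t\mid\mathcal F_{t-1}]$ is also bounded by $G$. Hence $|X_{n,t}|\le 2G\|a\|n^{-1/2}$, and the indicator $\mathbf 1\{|X_{n,t}|>\varepsilon\}$ vanishes identically for $n$ large. Lindeberg is therefore trivial.

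\emph{Conditional variance.} This is the main step. I need
\[
n^{-1}\textstyle\sum_{t\le n} a^\top S(x_t)a \xrightarrow{p} a^\top Sa,
\]
using hypothesis~(a). By Cesàro averaging, it suffices that $S(x_t)\to S$ in probability and that the $S(x_t)$ are uniformly bounded. Uniform boundedness holds because $S(x_t)\preceq\overline S$ by Assumption~\ref{ass:var}. Convergence in probability follows from the iterate bound $\mathbb E\|\Delta_t\|^2=O(t^{-\alpha})$: Chebyshev gives $x_t\to x^*$ in probability, and continuity of $S(\cdot)$ at $x^*$ then gives $S(x_t)\to S$ in probability.

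To upgrade to the averaged statement, fix $\delta>0$ and pick $r$ with $\|S(x)-S\|\le\delta$ whenever $\|x-x^*\|\le r$. Then
\[
\mathbb E\,\Bigl|n^{-1}\textstyle\sum_t a^\top(S(x_t)-S)a\Bigr| \le \|a\|^2\Bigl(\delta + 2\|\overline S\|\,n^{-1}\textstyle\sum_t P(\|\Delta_t\|>r)\Bigr).
\]
The last average is $O\bigl(n^{-1}\sum_t t^{-\alpha}r^{-2}\bigr)\to0$, so the conditional variance converges in $L^1$, hence in probability. The limit is a constant, so no nesting or stable-convergence subtleties arise.

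I expect no genuine obstacle. The one point needing care is using the continuity of $S(\cdot)$ only at $x^*$, which the $\delta$--$r$ split above handles without any uniform continuity. The remaining step is simply to record that the degenerate buffer coordinate poses no issue for Cram\'er--Wold, since its limit is the point mass at $0$.
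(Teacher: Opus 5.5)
Your proposal is correct and matches the paper's proof. You use the exact identity $A_tB_t=(-H^{-1},0)^\top$ to get a constant-coefficient martingale array, then apply Hall--Heyde with Cram\'er--Wold. Lindeberg is trivial by bounded gradients, and the conditional variance follows from $x_t\to x^*$ in probability, continuity of $S(\cdot)$ at $x^*$, and a bounded-convergence/Ces\`aro step. Your $\delta$--$r$ split, using $S(x_t)\preceq\overline S$ where the paper uses $4G^2$, just spells out that step, which the paper states in one line.
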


\begin{proof}
By the exact identity~\eqref{eq:AtBt}, $A_t B_t = (-H^{-1},0)^\top =: C$ is
\emph{constant}, so $\{n^{-1/2}C\xi_t\}_{t=1}^n$ is a martingale-difference
array ($\mathbb E[\xi_t \mid \mathcal F_{t-1}] = 0$, with $C$ deterministic).
We apply the martingale CLT of
\citet[Cor.~3.1]{halldavid1980martingale} to each scalar projection
$v^\top C\xi_t$ and pass to $\mathbb R^{2d}$ by the Cram\'er--Wold device.
\emph{Conditional Lindeberg:} bounded gradients give
$\|C\xi_t\| \le 2G\|H^{-1}\|_{\mathrm{op}} =: K$ a.s., so the event
$\{\|C\xi_t\| > \epsilon\sqrt n\}$ is empty once $n > K^2/\epsilon^2$ and the
conditional Lindeberg sum vanishes.  \emph{Conditional variance:}
$\mathbb E\|\Delta_t\|^2 = O(t^{-\alpha}) \to 0$ gives $x_t \to x^*$ in
probability, so by continuity of $S(\cdot)$ at $x^*$
(Theorem~\ref{thm:augmented-clt}(a)) and the uniform bound
$\|S(x_t)\|_{\mathrm{op}} \le 4G^2$, bounded convergence and Ces\`aro averaging
yield $\frac1n\sum_{t=1}^n \mathbb E[\xi_t\xi_t^\top \mid \mathcal F_{t-1}]
\xrightarrow{p} S$; hence the array's conditional covariance converges to
$C S C^\top = (-H^{-1},0)^\top S (-H^{-1},0) =
\mathrm{diag}(H^{-1}SH^{-1}, 0) = \Sigma_z$.  The two conditions give
$\sqrt n\,\overline{A_t B_t \xi_t} = n^{-1/2}\sum_{t=1}^n C\xi_t
\xrightarrow{d} \mathcal N(0, \Sigma_z)$.
\end{proof}

\begin{remark}[Degenerate buffer marginal in raw coordinates]
\label{rem:raw-vs-rescaled}
The buffer block of $\Sigma_z$ is degenerate: in raw coordinates the
averaged buffer $\overline m_{n-1}$ has no $\sqrt n$-scale Gaussian
fluctuation, because the buffer-block coefficient in~\eqref{eq:AtBt} is
exactly $0$.  The buffer does fluctuate, but only under a coarser,
buffer-magnifying normalization (reflecting the anisotropic $\tau_t^{-1}$
weighting of~\eqref{eq:Qsym-bounds}); this rescaled buffer marginal is
not used in Theorem~\ref{thm:main}, where only the iterate $(1,1)$ block
of $\Sigma_z$ enters.
\end{remark}

\subsection{Taylor Remainder}

It remains to control the second-order Taylor term, which is asymptotically
negligible at the $\sqrt n$ scale.

\begin{lemma}[Taylor-remainder negligibility]
\label{lem:taylor}
Under the conditions of Theorem~\ref{thm:augmented-clt},
$\sqrt n\, \overline{A_t B_t u_t} = o_p(1)$.
\end{lemma}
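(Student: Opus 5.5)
The plan uses the exact identity~\eqref{eq:AtBt}: the coefficient $A_tB_t = (-H^{-1},0)^\top$ is a deterministic constant. Hence
\[
  \sqrt n\,\overline{A_tB_tu_t} = \begin{pmatrix} -H^{-1}\,n^{-1/2}\sum_{t=1}^n u_t \\ 0 \end{pmatrix},
\]
so the buffer block vanishes identically. No time-varying coefficient, and no control of $L_t^{-1}$ or of the $\tau_t^{-1}$-sized blocks of $A_t$, is needed. The only remaining task is to show $n^{-1/2}\sum_{t\le n}u_t \to 0$ in $L^1$. Convergence in $L^1$ implies convergence in probability, which is the $o_p(1)$ claim.

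For the $L^1$ bound, I would use the local expansion of Assumption~\ref{ass:quadratic}. Trajectory confinement $x_t\in\mathcal N$ a.s.\ gives $\|u_t\| = \|r(x_t)\| \le L_R\|\Delta_t\|^2$ almost surely. Taking expectations and applying hypothesis~(b) of Theorem~\ref{thm:augmented-clt} (equivalently Proposition~\ref{prop:augmented-mse}) gives $\mathbb E\|u_t\| \le L_R\,\mathbb E\|\Delta_t\|^2 \le L_R C\,t^{-\alpha}$. By Minkowski's inequality,
\[
  \mathbb E\Bigl\|\sqrt n\,\overline{A_tB_tu_t}\Bigr\| \le \|H^{-1}\|_{\mathrm{op}}\,\frac{L_R C}{\sqrt n}\sum_{t=1}^n t^{-\alpha} = O\bigl(n^{1/2-\alpha}\bigr),
\]
and this tends to $0$ because $\alpha>1/2$. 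This is the same computation already carried out for $T_n$ in the proof of Proposition~\ref{prop:tautology}.

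I do not expect a real obstacle. The one point that needs care is that the argument relies on the row structure of $A_tB_t$ being exact for every $t$, including small $t$. Lemma~\ref{lem:Linv} requires only $\tau_t>0$ and $P_t\succ0$, which hold for all $t$, so no burn-in is needed. Finitely many early terms would contribute $O(n^{-1/2})$ in any case.

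A second point is that Assumption~\ref{ass:fourth} is not needed here, since only the second moment of $\Delta_t$ enters. This contrasts with the Lyapunov argument of Appendix~\ref{app:lyapunov}, where the Taylor term appears quadratically.

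Under the bias-corrected reparametrization, $\tilde A_t\tilde B_t = (-H^{-1},0)^\top$ holds as well (Section~\ref{sec:bias-correction}), so the same bound carries over verbatim.
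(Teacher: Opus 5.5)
Your proposal is correct and follows essentially the same route as the paper. The paper also uses the exact identity~\eqref{eq:AtBt}, in the form $\|A_tB_t\|_{\mathrm{op}} = \|H^{-1}\|_{\mathrm{op}}$, together with $\|u_t\|\le L_R\|\Delta_t\|^2$ and hypothesis~(b) to get $\mathbb E\|u_t\| = O(t^{-\alpha})$. It then concludes from $n^{-1/2}\sum_{t\le n}\mathbb E\|A_tB_tu_t\| = O(n^{1/2-\alpha})\to 0$ that the $L^1$ convergence gives $o_p(1)$.
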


\begin{proof}
This is the standard Polyak--Ruppert Taylor-remainder estimate
\citep{polyak1992acceleration}, in the time-varying preconditioner setting of
\citet{anhuo2026}; the only chain-specific input is the uniform bound
$\|A_t B_t\|_{\mathrm{op}} = \|H^{-1}\|_{\mathrm{op}} = O(1)$
from~\eqref{eq:AtBt}.  By Assumption~\ref{ass:quadratic}
$\|u_t\| \le L_R \|\Delta_t\|^2$, so $\mathbb E\|u_t\| = O(t^{-\alpha})$
by~\eqref{eq:clt-stability}, whence
$\sqrt n\,\mathbb E\bigl\|\overline{A_t B_t u_t}\bigr\| \le
  n^{-1/2}\sum_{t=1}^n \mathbb E\|A_t B_t u_t\| = O(n^{1/2-\alpha}) \to 0$
for $\alpha > 1/2$; $L^1$-convergence gives $\sqrt n\,\overline{A_t B_t u_t}
= o_p(1)$.
\end{proof}

\subsection{Combining the Pieces}

We now assemble the three preceding estimates---the martingale CLT, the
negligible Taylor term, and the vanishing remainder---into the joint CLT.

\begin{proof}[Completion of the proof of Theorem~\ref{thm:augmented-clt}]
Combining the decomposition~\eqref{eq:pr-decomposition} with
Lemmas~\ref{lem:Rnz-bound}, \ref{lem:mart-clt}, and~\ref{lem:taylor}: the
martingale term converges to $\mathcal N(0, \Sigma_z)$ with
$\Sigma_z = \mathrm{diag}(H^{-1}SH^{-1}, 0)$, while
$\sqrt n\,\overline{A_t B_t u_t} = o_p(1)$ and
$\sqrt n\, R_n^z = o_{L^2}(1) = o_p(1)$, so Slutsky's lemma
\citep[Lemma~2.8]{vaart1998asymptotic} gives $\sqrt n\,\overline z_n
\xrightarrow{d} \mathcal N(0, \Sigma_z)$.  This $\Sigma_z$ is, for
\emph{every} $t$, the frozen-chain Polyak--Ruppert covariance of
Theorem~\ref{thm:projection}: with $\Sigma_w(t) = B_t S B_t^\top/\eta_t^2$
and $A_t B_t = L_t^{-1}B_t/\eta_t$, the exact identity~\eqref{eq:AtBt} gives
$L_t^{-1}\Sigma_w(t)L_t^{-\top} = (A_t B_t)\,S\,(A_t B_t)^\top =
\mathrm{diag}(H^{-1}SH^{-1}, 0)$---so the limiting and frozen-chain
covariances are literally the same $t$-independent matrix, with no
reconciliation needed.  Taking the iterate marginal,
$\sqrt n\,\overline\Delta_n \xrightarrow{d} \mathcal N(0, H^{-1}SH^{-1})$,
as Theorem~\ref{thm:augmented-clt} asserts; the degenerate buffer block is discussed
in Remark~\ref{rem:raw-vs-rescaled}.
\end{proof}

\end{document}